\documentclass[11pt,reqno]{amsart}
\usepackage{amssymb,amsfonts,amsthm,amscd,stmaryrd,dsfont,esint,upgreek,constants,todonotes}
\usepackage[mathcal]{euscript}
\usepackage[latin1]{inputenc}   
\usepackage[mathcal]{euscript}
\usepackage{graphicx,color}
\textheight=8.5truein
\textwidth=6.0truein
\hoffset=-.5truein
\voffset=-.5truein
\numberwithin{equation}{section}
\newcommand{\N}{\mathbb N}


\newcommand{\R}{\mathbb R}
\def\E{\mathbb E}
\def\P{\mathbb P}
\parindent 0pt

\newconstantfamily{C}{symbol=C}
\newconstantfamily{c}{symbol=c}
\newconstantfamily{O}{symbol=\Omega}

\def\XXint#1#2#3{{\setbox0=\hbox{$#1{#2#3}{\int}$}
\vcenter{\hbox{$#2#3$}}\kern-.5\wd0}}

\numberwithin{equation}{section}
\newtheorem{thm}{Theorem}[section]
\newtheorem{lem}[thm]{Lemma}
\newtheorem{cor}[thm]{Corollary}
\newtheorem{prop}[thm]{Proposition}

\theoremstyle{definition}
\newtheorem{defn}[thm]{Definition}
\newtheorem{rmk}[thm]{Remark}


\def\smallnegint{\mathop{\int\mkern-13mu
        \raise.5ex\hbox{${\scriptscriptstyle\diagup}$}}\nolimits}
\def\ds{\displaystyle}

\def\ep{\varepsilon}

\def\ssetminus{\,\raise.4ex\hbox{$\scriptstyle\setminus$}\,}

\newcommand{\be}{\begin{equation}}
\newcommand{\ee}{\end{equation}}

\renewcommand{\d}{d}
\newcommand{\Rd}{\mathbb{R}^\d}

\renewcommand{\bar}{\overline}
\renewcommand{\tilde}{\widetilde}
\renewcommand{\hat}{\widehat}





\begin{document}
\title{On first order Mean Field Game systems with a common noise}

\author{Pierre Cardaliaguet$^{1}$ and Panagiotis E Souganidis$^{2}$}

\dedicatory{Version: \today}


\maketitle      

\begin{abstract}
We consider Mean Field Games without idiosyncratic but with Brownian type common noise. We introduce a notion of solutions of the associated backward-forward system of stochastic partial differential equations. We show that the solution exists and is unique for  monotone coupling functions. This  the first general result for  solutions of the Mean Field Games  system with common and no idiosynctratic noise.
We also use the solution to find approximate optimal strategies (Nash equilibria)   for $N$-player differential games with common but no idiosyncratic noise. An important step in the analysis is the study of the well-posedness of a stochastic backward Hamilton-Jacobi equation.  
\end{abstract}

\section{Introduction}

We consider Mean Field Games (MFG for short) without idiosyncratic but with Brownian-type common noise described by the backward-forward system of stochastic partial differential equations (SPDEs for short)
\be
\label{e.MFGstoch}
\left\{ \begin{array}{l}
\ds 
 d_{t} u_{t} = \bigl[ - \beta \Delta u_{t} + H(Du_{t},x) - F(x,m_{t}) -  \sqrt{2\beta} {\rm div}(v_{t}) \bigr] dt\\[2mm]
\hskip2.5in \ds  + v_{t} \cdot \sqrt{2\beta}dW_{t}
\ \  {\rm in } \ \ \R^d\times (0,T), \\[2mm]
\ds d_{t} m_{t} = \bigl[  \beta \Delta m_{t} + {\rm div} \bigl( m_{t} D_{p} H(Du_{t},x) 
\bigr) \bigr] dt 
- {\rm div} ( m_{t} \sqrt{2\beta} dW_{t} \bigr) 
 \ \  {\rm in } \ \   \R^d\times (0,T), \\[2mm]
 \ds u_T(x)=G(x,m_T) \ \ \   m_{0}=\bar m_0 \ \  {\rm in } \ \  \R^d.
\end{array}\right.
\ee

We introduce a notion of  solution of \eqref{e.MFGstoch}, which  is adapted to the common noise $W$,  and prove existence and uniqueness  when the couplings $F$ and $G$ are  nonlocal and
satisfy the well-known Lasry-Lions monotonicity condition introduced in \cite{LLJapan}.   Exact assumptions are stated later.  To the best of our knowledge, this is  the first general result for solutions of the MFG-system with common and no idiosynctratic noise. We also use the solution to derive approximate Nash equilibria  for $N$-player differential games with common but no idiosyncratic noise assuming  a structure condition on $H$.   
\smallskip

An important step in our analysis is the study of the well-posedness of strong, that is, a.e. in space-time  and semiconcave in space, solutions of the backward stochastic Hamilton-Jacobi (HJ for short) equation
\be\label{HJeqIntro}
\left\{\begin{array}{l}
 d_{t} u_{t} = \bigl[ - \beta \Delta u_{t} + H_t(Du_{t},x) -  \sqrt{2\beta} {\rm div}(v_{t}) \bigr] dt+ v_{t} \cdot \sqrt{2\beta}dW_{t}  \ \  {\rm in } \ \ \R^d\times (0,T), \\[2mm]
 u_T=  G \ \  {\rm in } \ \ \R^d,
\end{array}\right. 
\ee
with $H$ uniformly convex; again exact assumptions are stated later.
\smallskip

MFG  with common noise describe  optimal control problems with infinitely many small and interacting controllers whose dynamics are subjected to common noise. Such models  appear often  
in macroeconomics under the name  ``heterogenous agent models''; see, for instance, the  work of Krusell and Smith \cite{KrSm}. 
\smallskip

The mathematical description  of MFG with common noise, which  was introduced by Lions  \cite{LiCoursCollege} and then discussed, at an informal level,  in Bensoussan, Frehse  and Yam \cite{BeFrYa13} and Carmona and Delarue  \cite{CaDe14}, is  either probabilistic or analytic. 
\smallskip

%

The probabilistic formulation takes the  form of an optimal stochastic control problem involving a random distribution of the agents, which is the conditional law, given the common noise, of the optimal trajectory of the agents. 
In this setting, the existence of generalized solutions, that is,  solutions adapted to a larger filtration than the one generated by the common noise,  has been established  by Carmona, Delarue and Lacker \cite{CaDeLa} under very general assumptions; see also Ahuja \cite{ahuja2016wellposedness} and Lacker and Webster \cite{lacker2015translation}.  The former reference 
also establishes   the existence and uniqueness of a strong solution, which is adapted to the filtration of the common noise, under the Lasry-Lions monotonicity condition  and an  assumption on  the uniqueness of the optimal solution (with relaxed controls) of the stochastic control problem. Although the probabilistic  formulation allows for general existence and uniqueness results, it  requires the resolution of an optimal stochastic control problem, which is not  convenient for the numerical approximation or  the explicit analysis. Moreover, the condition on the uniqueness of the optimal solution, which  is quite demanding, has been shown to be satisfied  only under the strong assumption that either there is a nondegenerate idiosyncratic noise, or that the value function is  convex in space. The latter  is known to hold only for dynamics which are linear in space  and for cost functions which are strictly convex in space and control. 
\smallskip

The analytic approach to study  MFG problems with common noise involves the value function and the partial differential equation (PDE for short) it satisfies. There are two different but also very related  formulations  involving either stochastic MFG systems or the so-called master equation.

\smallskip

The former describes  the problem as a  coupled system of SPDEs known  as  the backward-forward stochastic MFG system. For problems without common noise, the system was introduced and studied by Lasry and Lions  in \cite{LL06cr1, LL06cr2}. In the presence of both idiosyncratic and common noises the  stochastic MFG system was first investigated by Cardaliaguet, Delarue, Lasry and Lions in \cite{CDLL}.  
\smallskip

The second analytic  approach, which was introduced by Lasry and Lions and  presented by Lions in \cite{LiCoursCollege},   is based on the  master equation, which is  a deterministic infinite dimensional PDE set  in the space of measures.
The  existence and uniqueness of solutions of the general infinite dimensional version of the master equation with idiosyncratic and common noise was shown in \cite{CDLL};  see also \cite{CaDeBook} for a generalization. Among other recent references about the master equation we point out  the works of Cardaliaguet, Cirant and Porretta  \cite{cardaliaguet2020splitting} who proposed a splitting method,  Lions \cite{LiCoursCollege} who introduced  the Hilbertian approach, in which   the master equation is embedded  in  the space of square integrable random variables, and, finally, Bayraktar, Cecchin, Cohen  and Delarue \cite{bayraktar2019finite} and Bertucci, Lasry  and Lions \cite{bertucci2019some, BeLaLi20} who  investigated  the existence and uniqueness for problems with common noise in finite state spaces.  
\smallskip

Here   we  study the stochastic MFG system \eqref{e.MFGstoch} which consists of 
a backward stochastic HJ-equation coupled with a forward stochastic Kolmogorov-Focker-Plank (KFP for short) PDE. In \eqref{e.MFGstoch},  
the Brownian motion  $W$ is the common noise and the unknown is the triplet  $(u,m,v)$, consisting of  
the value function $u$ of a  small agent, which  solves the backward HJ  SPDE  with Hamiltonian of the form $H(Du_t,x)-F(x,m_t)$, an auxiliary function $v$ which  ensures that  $u$ is  adapted to the filtration generated by the noise $W$, and  the density $m$  of the players which solves the forward stochastic KFP  SPDE. The two equations are coupled through  $F$ and $G$, which depend on $m$ in a nonlocal way. 
\smallskip

The main difference with previous works and, in particular, \cite{CDLL} is that, due to  the absence of idiosyncratic noise,  the solution of \eqref{e.MFGstoch} is not expected to  be ``smooth'' since  the HJ and KFP equations are only degenerate parabolic. 
\smallskip

To explain this problem, we first consider the HJ equation separately, that is, we look at \eqref{HJeqIntro}, which is a backward SPDE (BSPDE for short) associated with an optimal control problem with random coefficients. It follows from the work of 
Peng \cite{Pe92} that \eqref{HJeqIntro} has a unique solution provided that the noise satisfies a  nondegeneracy assumption, which, roughly speaking, means  that the $\beta$ in front of $ \Delta u_{t}$ is greater than the   $\beta$ in front of the terms involving $v$. More recently,  \eqref{HJeqIntro} was studied by Qiu \cite{QIU18} and Qiu and Wei \cite{QiuWei}, who introduced a notion of viscosity solution involving derivatives on the path space and proved its existence and  uniqueness. The equations studied in the last references  are  more general than \eqref{HJeqIntro}, in particular, the volatility is not constant, and require few  conditions on the Hamiltonian other than the standard growth and regularity.  
\smallskip

The study of  MFG with common noise necessitates the  use  of a completely different approach,  since the continuity equation for $m$ involves the derivative of $u$ in space and not a weak derivative in the path space. 
\smallskip

To motivate the new  approach we are putting forward here, we recall what happens for MFG problems without noise at all, that is, when $\beta=0$. In this deterministic case, where one can  take $v\equiv 0$, the natural concept of solution for \eqref{e.MFGstoch} requires  $u$  to be Lipschitz continuous and to satisfy the HJ equation in the viscosity sense, while  $m$ has to be bounded and to satisfy the KFP equation in the sense of distributions; see  \cite{LLJapan} and Cardaliaguet and Hadikhanloo   \cite{CaHa} for details. We note that, since  $m$ is absolutely continuous and bounded,  the term $mD_pH(Du_t,x)$ is  well-defined. However, it is also known that the boundedness condition on $m$ can  hold on large time intervals only if $H=H(p,x)$ is convex in $p$; see, for example,  Golse and James \cite{JG} where, to study  a  forward-forward system with nonconvex  $H$, it is  necessary to consider a much more degenerate notion of solution. When $H$ is convex in $p$, the solution 
of the HJ equation is naturally semiconcave.  Then the notion of viscosity  solution of  HJ is equivalent to the one  of the semiconcave a.e. solution  studied by Kruzhkov \cite{Kr60}; see also Douglis \cite{Do61}, Evans \cite{EvBook}, and Fleming \cite{Fl69}.
\smallskip

The first contribution of the paper is to show that the notion of semiconcave a.e. solution can be adapted to the BSPDE  \eqref{HJeqIntro} when reinterpreted in a suitable way. The starting point is the change of variable
 $$\tilde u_t(x)= u_t(x+\sqrt{2\beta} W_t),$$
which, using  the It\^o-Wentzell formula, leads, at least formally,  to
\be\label{BSDHJE_Intro}
d_t\tilde  u_t =  \tilde H_t (D\tilde u_{t},x)dt +d\tilde M_t \ \ \text{in} \ \ \R^d\times (0,T) \qquad \tilde u_T= \tilde G \ \ \text{in} \ \ \R^d,
\ee
with
\be\label{takis0} 
\tilde H_t(p,x)= H_t(p, x+\sqrt{2\beta} W_t) \ \ \text{ and} \ \  \tilde G(x)= G(x+\sqrt{2\beta} W_T).
 \ee

The problem then becomes to find a pair $(\tilde u_t,\tilde M_t)_{t\in [0,T]}$ adapted to the  filtration of $(W_t)_{t\in [0,T]}$,  where $\tilde M=\tilde M_t(x)$ is a globally bounded martingale, $\tilde u$ is continuous and semiconcave in the space variable, and \eqref{BSDHJE_Intro} is satisfied in an integrated form. 
\smallskip

Theorem \ref{thm.mainexists} and Proposition \ref{prop.comparison} establish respectively  that \eqref{BSDHJE_Intro} has a  solution and a comparison principle is satisfied. In  
Proposition \ref{prop.repsol} we also provide a stochastic  control representation of the solution.
The optimality conditions 
and the existence and the uniqueness of optimal trajectories are 
respectively the topics of Theorem \ref{thm.MaxPple} 
and Proposition \ref{prop.ExistOptTraj}. However, this last  point, which relies on the analysis of a continuity equation  associated with the drift $-D_p\tilde H(D\tilde u_t(x),x)$ (see Proposition~\ref{prop.uniConti} and Proposition~\ref{prop.UnikCont}) requires a much stronger structure condition on the Hamiltonian, which  we discuss  later. 
\smallskip 

Next we apply this approach to  \eqref{e.MFGstoch}. After the formal change of variables 
\be\label{takis101}
\tilde u_t(x)= u_t(x+\sqrt{2\beta}W_t, x) \ \ \text{and}  \ \ \tilde m_t=(id-\sqrt{2\beta} W_t)\sharp m_t,
\ee
we obtain  the new system 
\be\label{stoMFG_Intro}
\left\{\begin{array}{l}
\ds d_t\tilde  u_t = \left[ \tilde H_t (D\tilde u_{t}(x),x)-\tilde F_t(x,\tilde m_t)\right]dt +d\tilde M_t \ \    {\rm in} \  \ \R^d\times (0,T), \\[2mm] 
\ds \partial_t\tilde m_t =  {\rm div}(\tilde m_tD_p\tilde H(D\tilde u_t(x),x))dt \ \  {\rm in} \ \ \R^d\times (0,T), \\[2mm] 
 \ds \tilde m_0=\bar m_0 \qquad \tilde u_T = \tilde G(\cdot,\tilde m_T) \ \  {\rm in}\ \  \R^d \ \ \ 
\end{array}\right.
\ee
with $\tilde H$ as in \eqref{takis0},
 and 
$$\tilde F_t(x,m)= F(x+\sqrt{2\beta}W_t,(id+\sqrt{2\beta}W_t)\sharp m) \ \text{ and}   \ \tilde G(x)= G(x-\sqrt{2\beta} W_T, (id+\sqrt{2\beta}W_T)\sharp m_T).$$

 This transformation  was used in \cite{CDLL} to prove the existence of a strong solution of the stochastic MFG system with  common and  idiosyncratic noises, in which case \eqref{stoMFG_Intro} is non degenerate and has a ``smooth'' solution. 
\smallskip

Coming back to the degenerate system \eqref{stoMFG_Intro}, the problem is to find a solution

 $(\tilde u_t,\tilde M_t, \tilde m_t)_{t\in [0,T]}$ which is adapted to the  filtration  $(\mathcal F_t)_{t\in [0,T]}$ generated by $W$ and  $\tilde M=\tilde M_t(x)$ is a $({\mathcal F}_t)_{t\in [0,T]}$ martingale.  By a solution, we mean that  $\tilde u$ is space-time continuous and semiconcave in space,  while the martingale $\tilde M$ is  bounded, the HJ equation, once integrated in time, is satisfied $\P-$a.s. and a.e., and  the random measure $\tilde m$  has globally  bounded density satisfying   the continuity equation in the sense of distribution $\P-$a.s..  
\smallskip

Our main result, Theorem \ref{thm.main},  is that, under suitable structure and  regularity assumptions on the data and assuming that $F$ and $G$ are strongly monotone in the Lasry-Lions sense, 
\eqref{stoMFG_Intro} has a unique solution $(\tilde u, \tilde M, \tilde m)$. 
This is the first existence and uniqueness result of a  strong solution for MFG problems with a common and without idiosyncratic noise with value function that  is neither smooth nor convex in space in contrast with \cite{CDLL, CaDeLa}.
\smallskip

The final result concerning MFG is Proposition~\ref{prop.cv}. It  asserts that it is possible to use the solution of  the stochastic MFG system \eqref{e.MFGstoch} to derive approximate Nash equilibria in $N-$player differential games with a common noise. Such a  statement is standard in the MFG literature. The first results in this  direction go back to  Huang, Caines and Malham\'e \cite{huang2003individual}, \cite{huang2006large} 
for linear and nonlinear dynamics respectively. In both these references, the dynamics and payoff depend on  the empirical measure  through an average. Hence, the Central Limit Theorem  implies that the error term is of order $N^{-1/2}$. The result for a genuinely non linear version of MFG problems without common noise was obtained by Carmona and Delarue \cite{carmona2013probabilistic}; see also \cite{CaDe14}, Section~6 in Vol. II. Since then, there have been many variations and extensions, and we  we refer to \cite{CaDe14} and the references therein. As far as we know, Proposition \ref{prop.cv} is the first result for MFG problems with common and without idiosyncratic noises.  The main reason for proving it  is that it justifies the (somewhat formal) change of variables made to pass from the original MFG system \eqref{e.MFGstoch} to the transformed one \eqref{stoMFG_Intro}.  Indeed, we use  \eqref{stoMFG_Intro} to solve a problem which should actually involve the solution of \eqref{e.MFGstoch}. 
\smallskip

In contrast with  the analysis of the MFG system, to find the approximate Nash equilibria we need to consider  a  special  class the Hamiltonian $\tilde H$. Indeed,  we assume that, for some smooth and strictly positive coefficient $\tilde a$ and a smooth and  bounded vector field $\tilde b$,  $\tilde H$ is of the form 
\be\label{takis1}
\tilde H_t(p,x)= \frac12 \tilde a_t(x) |p|^2 + \tilde b_t(x)\cdot p.
\ee
We  suspect that \eqref{takis1}  may not be necessary. The reason we have to require it is to obtain the uniqueness, for fixed $\omega$ and given the bounded variation vector field $-D_pH(D\tilde u_t(x),x)$,  of the solution of the continuity equation in \eqref{stoMFG_Intro}. 
\smallskip

The uniqueness of solutions of linear transport and continuity equations under weak assumptions on the vector field is a very intriguing problem. Its study goes back to DiPerna and Lions \cite{ DiPLio} and Ambrosio \cite{Am04}.  These results cannot be applied to the case at hand, since they require regularity which is not satisfied by $-D_pH(D\tilde u_t(x),x)$.  Instead, here we rely on a result of Bouchut, James and Mancini \cite{BoJaMa} which requires a half-Lipschitz condition on the vector field. To use it, however, here we need to assume \eqref{takis1}. 
\smallskip

\subsection*{Organization of the paper} The paper is organized in two parts. In the first, we study the backward  HJ SPDE  \eqref{HJeqIntro}.  We state the assumptions in subsection \ref{subsec.ass}, 
show the existence of a solution in subsection \ref{subseq.exHJ}, prove its  uniqueness by a comparison principle in subsection \ref{subseq.Comp}, propose an optimal control representation  and discuss a maximum principle in subsection \ref{subseq.OCrep}. In order to prove the existence of optimal solutions in subsection \ref{subsec.exOS},  we first  discuss in subsection \ref{subseq.coneq}
 the continuity equation associated for the optimal drift. 
 The second part is devoted to the stochastic MFG system \eqref{e.MFGstoch}.   We state the assumptions in subsection~\ref{subsec.exTakis}, and the main existence and uniqueness result in subsection \ref{subsec.exMFG}.  In subsection \ref{subsec.MFGwn} we recall the case without noise, for which we provide sharp estimates.  We then construct  approximate solutions of the stochastic MFG system in subsection \ref{subsec.appMFG} and, finally, pass to the limit to prove the main result in subsection \ref{subsec.lim}. In subsection \ref{subsec.game} we show the existence of the approximate Nash equilibria for the $N$-player game.
Finally,  in the appendix we revisit the result of \cite{BoJaMa} on the uniqueness of the solution to some continuity equations. 
\subsection*{Notation} Throughout  the paper $\mathcal O$ is an open subset of $\R^d$, and $C^2(\mathcal O)$ is the space of $C^2$-maps on $\mathcal O$ with bounded derivatives  endowed with the sup-norm 
$$
\|u\|_{C^2(\mathcal O)}= \|u\|_{L^\infty(\mathcal O)}+ \|Du\|_{L^\infty(\mathcal O)}+\|D^2u\|_{L^\infty(\mathcal O)}; 
$$
depending on the context,  we often omit the subscript $L^\infty(\mathcal O)$ and simply write $\|\cdot\|_\infty.$
We work on a  complete filtered probability space $(\Omega, \mathcal F, (\mathcal F_t)_{t\geq 0}, \P)$ 
carrying an d-dimensional Wiener process $W = (W_t)_{t \in [0,T]}$ such that $(\mathcal F_t)_{t\geq0}$ is the natural filtration generated by $W$ augmented by all the $\P-$null sets in $\mathcal F$.
We denote by $\mathcal P$ the $\sigma-$algebra of the predictable sets on $\Omega\times [0,T]$  associated with $(\mathcal F_t)_{t\geq 0}$.  Given   a complete metric space $E$ and $p\geq 1$, ${\mathcal S}^p(E)$ is the space of continuous, $E$-valued, $\mathcal P-$measurable processes $X=(X_t)_{t\geq 0}$ such that, for some $x_0\in E$  and, therefore, any point in $E$, 
$$
\E[\sup_{t\in [0,T]} d_E(X_t,x_0)^pdt]<+\infty.
$$
We set $\mathcal S^r(C^2_{loc}(\R^d))= \bigcap_{n\geq 1} \mathcal S^r(C^2(B_n))$, where $B_n$ is the open ball centered at $0$ and of radius $n$, and define similarly $\mathcal S^r(L^1_{loc}(\R^d))$ and $\mathcal S^r(W^{1,1}_{loc}(\R^d))$. We write $L^\infty((\Omega \times \mathcal F_T); C^2(\R^d))$ for the the space of bounded $C^2(\R^d)$-valued and $\mathcal F_T$ maps on $\Omega$. For $k\geq 1$, $\mathcal P_k(\R^d)$ denotes the set of Borel probability measures on $\R^d$ with finite $k-$th order moment $M_k(m)= \int_{\R^d} |x|^k m(dx)$ , endowed with the Wasserstein distance ${\bf d}_k$. Finally, $\nu_y$ denotes the external normal vector to a ball $B_r$ at $y \in \partial B_r$.

\subsection*{Some assumptions and terminology} To ease the presentation and avoid repetitions in the rest of the paper, we summarize here some of the terminology we use and the assumptions make. 
\smallskip

A map $\mathcal G:\R^d\times \mathcal P_1(\R^d)\to \R$ is called strongly monotone, if there exists $\alpha>0$ such that,
for all $m_1,m_2 \in \mathcal P_1(\R^d)$
\begin{equation}\label{SM1}
\displaystyle \int_{\R^d} (\mathcal G (x,m_1)-\mathcal G(x,m_2)) (m_1-m_2)(dx) 
 \geq \alpha \int_{\R^d} (\mathcal G(x,m_1)-\mathcal G(x,m_2))^2dx. 
\end{equation}
A map $\mathcal G:\R^d\times \mathcal P_1(\R^d)\to \R$ is called strictly monotone, if 
\begin{equation}\label{SM2}
\displaystyle \int_{\R^d} (\mathcal G (x,m_1)-\mathcal G(x,m_2)) (m_1-m_2)(dx)\leq 0 \ \  \text{implies} \ \ m_1=m_2.
\end{equation}

The typical assumption required for  Hamiltonians $\mathcal H:\R^d\times \R^d \to \R$ we consider in this paper is that 
\be\label{HH}
\begin{cases}
(i) \ \mathcal H={\mathcal H}(p,x) \ \text{is convex in $p$}, \\[2mm]
(ii) \; \text{for any $R>0$,  there exists $C_R>0$ such that,}\\[2mm]
\text{ for all $x,p\in \R^d$ with  $|p|\leq R$, }\\[2mm]
\qquad  |\mathcal H (p,x)|+ |D_p \mathcal H(p,x)|+ |D^2_{px}\mathcal H(p,x)|+ |D^2_{pp}\mathcal H(p,x)|\leq C_R, \\[2mm]
(iii)\; \text{ there exists $\lambda >0$ and $C_0>$ such that,}\\[2mm] 
\text{for any $p,q,x,z\in \R^d$ with $|z|=1$ and in the sense of distributions, }\\[2mm]
\lambda(D_p\mathcal H(p,x)\cdot p- \mathcal H(p,x))+ D^2_{pp}\mathcal H(p,x)q\cdot q \\[1.5mm]
  \qquad \qquad \qquad \qquad \qquad \qquad  + 2D^2_{px}\mathcal H(p,x)z\cdot q + D^2_{xx}\mathcal Hp,x) \ z\cdot z \geq - C_0.
\end{cases}
\ee
The typical regularity assumption we will need for maps $\mathcal G:\R^d\times \mathcal P_1(\R^d)\to \R$ is 
\be\label{FG}
\begin{cases}
 \mathcal G\in C(\R^d\times \mathcal P_1(\R^d);\R) \ \text{and 
there exists $C_0>0$ such that}\\[2mm]
\underset{m\in \mathcal P_1(\R^d), \; t\in [0,T]}\sup\left[
\|\mathcal G(\cdot, m)\|_\infty +\|D\mathcal G(\cdot ,m)\|_\infty + \|D^2\mathcal G(\cdot ,m)\|_\infty\right] \leq C_0,
\end{cases}
\ee

\subsection*{Acknowledgments} Cardaliaguet was partially supported by the  AFOSR grant FA9550-18-1-0494. Souganidis was partially supported by the NSF grants DMS-1600129 and DMS-1900599, the ONR grant N000141712095 and the AFOSR grant FA9550-18-1-0494.
%
%
%
%
%
%
%

\section{The stochastic backward Hamilton-Jacobi equation}

Following the discussion in the introduction,  here we study  the HJ SPDE \eqref{BSDHJE_Intro}
with $\tilde H_t$ and $\tilde G$ given by \eqref{takis0} and $\tilde M$ an unknown martingale. 

\subsection{Assumptions and the notion of solution}\label{subsec.ass}
We introduce the main assumptions about the continuity of $\tilde H$ and $\tilde G$, the convexity and coercivity in the gradient and the higher regularity in the space and gradient variables of $\tilde H$ (\eqref{H1} and \eqref{H2}), and the continuity in time of $\tilde H$ \eqref{H3}. To simplify the notation, when possible, we omit the explicit dependence on $\omega$. 
\smallskip

We assume that
\be\label{H1}
\tag{H1}
\tilde G \in L^\infty(\Omega, \mathcal F_T, C^2(\R^d)) \ \ \text{ and}  \ \ \tilde H\in \mathcal S^r(C^2_{loc}(\R^d\times \R^d)) \ \ \text{ for all \ $r\geq 1$,} 
\ee
and
\be\label{H2}
\tag{H2}
\tilde H_t \ \text{satisfies \eqref{HH} uniformly in $t\in [0,T]$ and in $\omega\in \Omega$.}  
\ee

To quantify the continuity of $\tilde H$ in time, we define, for $R>0$, 
\be\label{takis8}
\omega^N_R = \sup_{|p|\leq R, \ |s-t|\leq 1/N,\ y\in \R^d} |\tilde H_s(p,y)-\tilde H_t(p,y)|.
\ee
We assume that, for all  $R>0$, 
\be\label{H3}
\tag{H3}
\underset{N\to \infty} \lim \E[\omega^N_R]=0.
\ee

The notion of solution of \eqref{BSDHJE_Intro} is introduced next.

\begin{defn}\label{def.solHJ} The couple  $(\tilde u,\tilde M):\R^d\times [0,T]\times \Omega \to \R^2$ is a solution of  \eqref{BSDHJE_Intro} if the following conditions hold: 
\smallskip

(i)~$\tilde u\in \mathcal S^r(W^{1,1}_{loc}(\Rd))$ and  $\tilde M\in \mathcal S^r(L^1_{loc}(\R^d))$ for all $r\geq 1$, 
\smallskip

(ii)~ there exists $C>0$ such that, $\P-$a.s., for a.e. $t\in [0,T]$ and all $z\in \R^d$ such that $|z|\leq 1$ and in the sense of distributions, 
$$
 \|\tilde u_t\|_\infty + \|D\tilde u_t\|_\infty +  \|\tilde M_t\|_\infty  + D^2\tilde u_t \ z\cdot z \leq C,
 $$
(iii)~  for a.e. $x\in \R^d$, the process $(\tilde M_t(x))_{t\in [0,T]}$ is a continuous  martingale,
\smallskip

and
\smallskip

(iv)~  for a.e. $(x,t)\in \R^d\times [0,T]$ and $\P-$a.s., 
\be\label{def.eq}
\tilde  u_t(x) = \tilde G(x) -\int_t^T \tilde H_s(D\tilde u_s(x),x)ds -\tilde M_T(x)+ \tilde M_t(x).
\ee
\end{defn}

\begin{rmk}{\rm As it is often the case in the literature, the martingale $\tilde M_t(x)$ can be written as a stochastic integral of an adapted process $\tilde Z_t(x)$. 
}\end{rmk}

\subsection{Existence of a solution}\label{subseq.exHJ}
We prove  that  \eqref{BSDHJE_Intro} has a solution as in Definition~\ref{def.solHJ}.

\begin{thm}\label{thm.mainexists} Assume \eqref{H1}, \eqref{H2} and \eqref{H3}. Then there exists a solution of  \eqref{BSDHJE_Intro}. 
\end{thm}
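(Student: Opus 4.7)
My strategy is a backward time-discretization combined with classical deterministic HJ theory on sub-intervals and a conditional expectation step to enforce adaptedness. Fix $N\geq1$, $h=T/N$, $t_k=kh$, and define the scheme on the grid: set $\tilde u^N_{t_N}=\tilde G$; given $\tilde u^N_{t_{k+1}}$ that is bounded, Lipschitz, semiconcave and $\mathcal F_{t_{k+1}}$-measurable, solve for each $\omega$ the deterministic terminal-value problem
\begin{equation*}
\partial_t v^{N,k}+\tilde H_{t_k}(Dv^{N,k},x)=0 \ \ \text{in}\ (t_k,t_{k+1})\times\R^d, \qquad v^{N,k}_{t_{k+1}}=\tilde u^N_{t_{k+1}},
\end{equation*}
in the Kruzhkov/Fleming semiconcave a.e.\ sense. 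Because $\tilde H_{t_k}$ is $\mathcal F_{t_k}$-measurable, $v^{N,k}$ is $\mathcal F_{t_{k+1}}$-measurable, and I recover adaptedness by setting $\tilde u^N_{t_k}(x):=\mathbb E[v^{N,k}_{t_k}(x)\,|\,\mathcal F_{t_k}]$, which preserves the $L^\infty$-, Lipschitz-, and semiconcavity-bounds. Writing $v^{N,k}_{t_k}=\tilde u^N_{t_{k+1}}-\int_{t_k}^{t_{k+1}}\tilde H_{t_k}(Dv^{N,k}_s,\cdot)\,ds$, the decomposition
\begin{equation*}
\tilde u^N_{t_{k+1}}=\tilde u^N_{t_k}+\int_{t_k}^{t_{k+1}}\tilde H_{t_k}(Dv^{N,k}_s,\cdot)\,ds+\bigl(v^{N,k}_{t_k}-\tilde u^N_{t_k}\bigr)
\end{equation*}
exposes the martingale increment $v^{N,k}_{t_k}-\tilde u^N_{t_k}$, which is centered given $\mathcal F_{t_k}$. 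I then assemble these into a continuous $(\mathcal F_t)$-martingale $\tilde M^N_t:=\mathbb E\bigl[\sum_{k<N}(v^{N,k}_{t_k}-\tilde u^N_{t_k})\,\big|\,\mathcal F_t\bigr]$, using that $(\mathcal F_t)$ is Brownian and thus supports continuous modifications.

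Next I would establish uniform-in-$N$ estimates on $\|\tilde u^N_t\|_\infty$, $\|D\tilde u^N_t\|_\infty$, and on the semiconcavity constant of $\tilde u^N_t$. The propagation estimate in \eqref{H2}(iii) is exactly what keeps the semiconcavity constant uniform across each deterministic HJ step, while the convexity and the derivative bound \eqref{H2}(ii) control the sup-norm and Lipschitz constant; conditional expectation respects each of these properties. The telescoping identity $\tilde M^N_T=\tilde u^N_T-\tilde u^N_0-\sum_k\int_{t_k}^{t_{k+1}}\tilde H_{t_k}(Dv^{N,k}_s,\cdot)\,ds$ then yields a uniform $L^\infty$-bound on $\tilde M^N_T$. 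Equipped with these bounds, the family $\{\tilde u^N_t\}_N$ is pathwise precompact in $C_{\text{loc}}(\R^d)$ for each $t$ by Arzel\`a--Ascoli; equicontinuity in time is inherited from the integral decomposition together with the $L^2$-martingale bound, while the error introduced by freezing $\tilde H$ at $t_k$ is controlled by \eqref{H3}. I would then extract a subsequence along which $\tilde u^N\to\tilde u$ in probability in $C_{\text{loc}}([0,T]\times\R^d)$ and $\tilde M^N\to\tilde M$ in an appropriate $L^2$-sense, and verify that $(\tilde u,\tilde M)$ satisfies the integrated form \eqref{def.eq} and conditions (i)--(iii) of Definition \ref{def.solHJ}.

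\textbf{Main obstacle.} The single hardest step is the nonlinear passage to the limit in $\tilde H_s(D\tilde u^N_s,x)$: the scheme supplies only an $L^\infty$-bound on $D\tilde u^N$, which is not by itself sufficient to pass to the limit in a nonlinear expression. The semiconcavity estimate generated by \eqref{H2}(iii) is the crucial structural input: it upgrades local uniform convergence of $\tilde u^N$ to a.e.\ convergence of $D\tilde u^N$ (a standard fact for uniformly Lipschitz, uniformly semiconcave families), after which dominated convergence and the continuity of $\tilde H$ identify the limiting nonlinear integrand. This is precisely why the semiconcave a.e.\ class is the right notion of solution for \eqref{BSDHJE_Intro} in this degenerate regime without idiosyncratic noise. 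A secondary technical point is pairing the construction with a genuinely continuous limit martingale satisfying (iii) of Definition \ref{def.solHJ}, which is why I rely on the Brownian filtration and define $\tilde M^N_t$ as a conditional expectation rather than as a piecewise-constant interpolant.
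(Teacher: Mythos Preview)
Your discretization scheme, the backward induction with conditional expectations, and the uniform $L^\infty$/Lipschitz/semiconcavity estimates are exactly what the paper does; likewise your identification of semiconcavity as the mechanism that upgrades local uniform convergence of $\tilde u^N$ to a.e.\ convergence of $D\tilde u^N$ is spot on. The gap is in the convergence step itself. Arzel\`a--Ascoli gives pathwise precompactness for fixed $\omega$, but the extracted subsequence then depends on $\omega$, and there is no obvious soft argument that upgrades this to convergence in probability in $C_{\text{loc}}([0,T]\times\R^d)$ along an $\omega$-independent subsequence. Tightness of the laws of $\tilde u^N$ on path space would only give convergence in distribution, and after Skorokhod representation the limit lives on an enlarged space, so adaptedness to the original Brownian filtration---which you need for condition~(iii) of Definition~\ref{def.solHJ}---is lost. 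Note also that the approximations $\tilde u^N$ are c\`adl\`ag, not continuous, in $t$ because of the conditional-expectation jumps, so equicontinuity in time of the approximating sequence fails as stated.

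The paper closes this gap not by compactness but by a quantitative Cauchy estimate in the seminorms $\sup_{t}\E[\|\cdot\|_{L^\infty(B_R)}^{d+1}]$, obtained via a Douglis-type $L^1$ contraction argument: for $N<K$ one sets $w_t=\phi(\tilde u^N_t-\tilde u^K_t)$ with $\phi$ convex nonincreasing, derives a differential inequality for $\E[\int_{B_{\alpha+\beta t}}w_t]$ using a regularized drift ${\bf b}^\ep$ whose divergence is bounded above thanks to the semiconcavity, and controls the residual by $\E[\omega^N_C]$ from \eqref{H3}. This gives convergence of the \emph{full} sequence in an expectation norm on the original probability space, so the limit is automatically $(\mathcal F_t)$-adapted and the martingale limit is obtained directly from \eqref{liauzqesnrdgc}. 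Your plan would become complete if you replaced the Arzel\`a--Ascoli step by such a Cauchy estimate; the rest of your outline then goes through essentially as written.
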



\begin{proof} The solution is obtained as the limit of solutions of a sequence of approximate problems which we introduce next. 
\smallskip

For each $N \in \N$,  we consider the partition of ${(t^N_n)}_{n\in \{0,\dots,N\}}$ of $(0,T)$ with $t^N_n=Tn/N$, set 
$$
\tilde H^N_t(p,x)= \tilde H_{t_n}(p,x)\ \ {\rm on} \ \ [t^N_n,t^N_{n+1}), 
$$
denote by $(\mathcal F^N_t)_{t\in [0,T]}$ the piecewise constant filtration given by 
$$\mathcal F^N_t= \mathcal F_{t^N_n} \ \  \text{if} \ \  t\in [t^N_n, t^N_{n+1}),$$ 
define using backward induction the  c\`adl\`ag in time processes $\tilde u^N=\tilde u^N_t(x)$ and  $\Delta M^N= \Delta M^N_{t_n}(x)$ by $\tilde u^N_T= \tilde G \ \ {\rm in} \ \  \R^d,$ and, for $n=0,\ldots, N-1$, 
\[
-\partial_t \tilde u^N_t+ \tilde H^N_t(D\tilde u^N_t,x)= 0 \ \  {\rm in } \ \  \R^d\times (t^N_n, t^N_{n+1}),\]
and, for $x\in \R^d$,
\[u^N_{t^{N,-}_{n+1}}(x)= \E\left[\tilde  u^N_{t^{N,+}_{n+1}}(x) \ |\ {\mathcal F}_{t_n}\right] \  \ \text{and} \ \  
\Delta \tilde M^N_{t^{N,-}_{n+1}}(x) = u^N_{t^{N,+}_{n+1}}(x)- \E\left[ u^N_{t^{N,+}_{n+1}}(x) \ |\ {\mathcal F}_{t^N_n}\right],\]
%
and, finally,  set 
$$
\tilde M^N_t(x) = \sum_{t_n\leq t} \Delta \tilde M^N_{t_n}(x).
$$
%
%
Using our assumptions, we have  the following lemma. Its  proof is presented  after the end of the ongoing one. 
\begin{lem}\label{lem.reguuNMN} Assume \eqref{H1}, \eqref{H2}, and \eqref{H3}. There exists a $C>0$ such that,  $\P-$a.s., a.e. $ t\in [0,T]$ and for all  $z\in \R^d$ with $|z|\leq 1$, and in the sense of distributions,
$$\|\tilde u^N\|_\infty + \|D\tilde u^N\|_\infty  +\|\tilde M^N\|_\infty + D^2\tilde u^N \ z\cdot z \leq C.$$ 
%
Moreover, for any $x\in \R^d$, the process $(\tilde u^N_t(x))_{t\in [0,T]}$ is adapted to the filtration $(\mathcal F^N_t)_{t\in [0,T]}$  and 
$(\tilde M^N_{t_n}(x))_{t\in [0,T]}$ is a  martingale with respect to the discrete filtration $(\mathcal F_{t_n})_{n=0,\ldots, N}$.
\end{lem}

It is immediate from the definition of the filtration $(\mathcal F^N_t)_{t\in [0,T]}$ that $(\tilde u^N_t(x))_{t\in [0,T]}$ is also adapted to $(\mathcal F_t)_{t\in [0,T]}$. 

Continuing with the ongoing proof we  note that $(\tilde u_t^N,\tilde M_t^N)_{t\in [0,T]}$ solves the backward equation
\be\label{lkjnzrdgf}
d\tilde u^N_t = \tilde H^N_t(D\tilde u^N_t,x) dt +d\tilde M^N_t \ \ \text{in} \ \ \R^d\times (0,T)  \quad \tilde u^N_T=\tilde G \ \ \text{in} \ \ \R^d, 
\ee
in the sense that, $\P-$a.s. and for a.e. $(x,t)\in \R^d\times [0,T]$,  
$$
\tilde u^N_t(x) =\tilde G(x) - \int_t^T \tilde H^N_s(D\tilde u^N_s(x),x) ds - \tilde M^N_T(x) + \tilde M^N_t (x). 
$$
We show next that   $(\tilde u_t^N,\tilde M_t^N)_{t\in [0,T]}$ is  Cauchy sequence in a suitable space, and for this we follow Douglis' \cite{Do61} uniqueness proof (see also \cite{EvBook}). 
\smallskip

Fix  $0<N<K$, let $\phi:\R\to [0,\infty) $ be a smooth, Lipschitz continuous, convex and nonincreasing map, and  set  
$$w_t(x)= \phi(\tilde u^N_t(x)-\tilde u^K_t(x)).$$ 
Using induction and the  convexity of $\phi$ to cancel the jump terms, which are martingales, we find that, for any $t\in [0,T)$ and $h\in(0,T-h)$, 
\be\label{takis2}
\begin{cases}
\E\Bigl[ w_{t+h}(x)- w_t(x) \Bigr]  \geq  \\[2.5mm]
\E\Bigl[  \int_t^{t+h} \phi'(\tilde u^N_s(x)-\tilde u^K_s(x)) (\tilde H^N_s(D\tilde u^N_s,x)-\tilde H^K_s(D\tilde u^K_s,x)) ds \Bigr]\\[2.5mm]
 \hskip1.25in = \E\Bigl[   \int_t^{t+h} ({\bf b}_s(x)\cdot D w_s(x) + \zeta_s(x))ds\Bigr] 
\end{cases}
\ee
where 
\[ \zeta_s(x)=  \phi'(\tilde u^N_s(x)-\tilde u^K_s(x))(\tilde H^N_s(D\tilde u^K_s(x),x)-\tilde H^K_s(D\tilde u^K_s(x),x)),\]
and
\[{\bf b}_s(x)= \int_0^1 D_p\tilde H^N_s((1-\lambda)D\tilde u^N_s(x)+\lambda D\tilde u^K_s(x),x)d\lambda.\] 
Let ${\bf b}^\ep$ be a regularization of $b$ to be specified below. Then \eqref{takis2} can be rearranged to read 
\be\label{lieznrdgf}
\E\Bigl[ w_{t+h}(x)- w_t(x) \Bigr]  \geq \E\Bigl[ \int_t^{t+h} {\rm div}({\bf b}^\ep_s w_s) -{\rm div}({\bf b}^\ep_s) w_s +({\bf b}_s-{\bf b}^\ep_s)\cdot Dw_s+ \zeta_sds\Bigr].
\ee

For $\alpha,\beta>0$ to be chosen, we consider the quantity 
\[
e_t = \E[ \int_{B_{\alpha+\beta t}} w_tdx],
\]
\vskip-0.1in
and  claim that 
\be\label{hjlbensrdc}
\begin{cases}
e_T-e_t \geq   \E\Bigl[ \beta \int_t^T \int_{\partial B_{\alpha+\beta s}} w_s(x) \ dxds\Bigr]\\[2.5mm]
\hskip.25in +  \E\Bigl[ \int_t^{T}\int_{B_{\alpha+\beta s}} {\rm div}({\bf b}^\ep_s w_s) -{\rm div}({\bf b}^\ep_s) w_s +({\bf b}_s-{\bf b}^\ep_s)\cdot Dw_s+ \zeta_sdxds\Bigr].
\end{cases}
\ee
Indeed, let $k$ be a large integer and set $\theta^k_r= t+ \frac{r}{k}(T-t)$ for $r\in \{0, \dots, k\}$. Integrating \eqref{lieznrdgf} 
over $B_{\alpha+\beta \theta^k_r}$ with $t=\theta^k_r$ and $h=1/k$ and summing over $r$ we obtain
\begin{align*} 
& 
e_t-e_t - \E\Bigl[\sum_{r=0}^{k-2} \int_{B_{\alpha+\beta \theta^k_{r+1}}\backslash B_{\alpha+\beta \theta^k_{r}}} w_{\theta^k_{r+1}}(x) dx\Bigr]
= \E\Bigl[\sum_{r=0}^{k-1}  \int_{B_{\alpha+\beta \theta^k_r}} (w_{\theta^k_{r+1}}(x)- w_{\theta^k_r}(x))dx \Bigr]\\
& \geq 
 \E\Bigl[\sum_{r=0}^{k-1}   \int_{\theta^k_r}^{\theta^k_{r+1}} \int_{B_{\alpha+\beta \theta^k_r}}
( {\rm div}({\bf b}^\ep_s w_s) -{\rm div}({\bf b}^\ep_s) w_s +({\bf b}_s-{\bf b}^\ep_s)\cdot Dw_s+ \zeta_s)dyds\Bigr],
\end{align*} 
and,  after letting $k\to +\infty$, \eqref{hjlbensrdc}. 
\smallskip

Next, rearranging \eqref{hjlbensrdc} we find, for all $t\in (0,T)$. 
\[
\begin{cases} e_t  \leq  \E\Bigl[  - (\int_t^T\int_{B_{\alpha+\beta s}}  ({\rm div}({\bf b}^\ep_s w_s) -{\rm div}({\bf b}^\ep_s) w_s +({\bf b}_s-{\bf b}^\ep_s)\cdot Dw_s + \zeta_s) dyds \\[3mm]
 \qquad - \beta \int_t^T \int_{\partial B_{\alpha+\beta s}} w_sdx ds   \Bigr]+e_T,
\end{cases} 
 \]
and, after integrating by parts, 
\be\label{takis10}
\begin{cases}
e_t   \leq  
 \E\Bigl[  -\int_t^T \int_{B_{\alpha+\beta s}}( 
 -{\rm div}({\bf b}^\ep_s) w_s +({\bf b}_s-{\bf b}^\ep_s)\cdot Dw_s + \zeta_s) dyds \\[2mm]
 \qquad - \int_t^T \int_{\partial B_{\alpha+\beta s}}  ({\bf b}^\ep_s\cdot \nu_y + \beta) w_s dyds \Bigr] +e_T.
\end{cases}
\ee

We return now to the choice of ${\bf b}^\ep$. For this, let $\tilde u^{N,\ep}=\tilde u^{N}\ast \xi^\ep$ and $\tilde u^{K,\ep}=\tilde u^{K}\ast \xi^\ep$ be  space-time regularizations of $\tilde u^N$ and $\tilde u^K$  with a smooth compactly supported kernel $\xi^\ep$. 
\smallskip

Then, for all  $(x,t) \in \R^d\times (0,T)$,  $z\in \R^d$ such that $|z|\leq 1$ and in the sense of distributions, 
\be\label{takis3}
\begin{cases}
|\tilde u^{N,\ep}(x,t)|\leq \|\tilde u^{N}\|_\infty, \ \  |\tilde u^{K,\ep}(x,t)|\leq \|\tilde u^{K}\|_\infty, \\[2mm]
|D\tilde u^{N,\ep}(x,t)|\leq  \|D \tilde u^{N}\|_\infty,\ \  |D\tilde u^{K,\ep}(x,t)|\leq  \|D\tilde u^{K}\|_\infty,\\[2mm]
D^2\tilde u^{N,\ep}(x,t) \ z \cdot z \leq C, \ \  D^2\tilde u^{K,\ep}(x,t) \  z\cdot z \leq C,
\end{cases}
\ee
and,  as $\ep\to 0$ and for a.e. $(x,t)$,
\be\label{takis4}
D\tilde u^{N,\ep}(x,t) \to Du^{N}(x,t) \ \ \text{and} \ \ D\tilde u^{K,\ep}(x,t) \to Du^{K}(x,t).
\ee
Let 
$$
{\bf b}^\ep_s(x)= \int_0^1 D_p\tilde H^N_s((1-\lambda)D\tilde u^{N,\ep}_s(x)+\lambda D\tilde u^{K,\ep}_s(x),x)d\lambda. 
$$
It is immediate from the properties of $\tilde H$,  \eqref{takis3} and Lemma~\ref{lem.reguuNMN} that there exists  $C_1>0$ such that, for all $(x,t)$ and $\P-$a.s. in $\omega$, 
\be\label{takis6}
|{\bf b}^\ep_s(x)| \leq \sup_{|p|\leq C, \ y\in \R^d, \ \omega\in \Omega} |D_p\tilde H^N_s(p,y,\omega)| \leq C_1,
\ee
where $C$ is the upper bound on $\|D\tilde u^N\|_\infty$ and $\|D\tilde u^K\|_\infty$ in  Lemma~\ref{lem.reguuNMN}. 
\smallskip

Furthermore, as $\ep\to 0$ and $\P-$a.s., ${\bf b}^\ep \to {\bf b}$ for a.e. $(x,t)$ and in any $L^p_{loc}$.
\smallskip

Finally, since
\begin{align*}
{\rm div}({\bf b}^{\ep}_t(x))& =  \int_0^1 {\rm Tr}(D^2_{pp}\tilde H^N_{t}((1-\lambda)D\tilde u^{N,\ep}+\lambda D\tilde u^{K,\ep},x)((1-\lambda)D^2\tilde u^{N,\ep}+\lambda D^2\tilde u^{K,\ep}))d\lambda \\
& + \int_0^1 {\rm Tr}(D^2_{px}\tilde H^N_{t}((1-\lambda)D\tilde u^{N,\ep}+\lambda D\tilde u^{K,\ep},x))d\lambda, 
\end{align*}
it follows from the convexity of $\tilde H$, \eqref{H2} with $R=C$ from Lemma~\ref{lem.reguuNMN}, and  \eqref{takis3} that there exists $\tilde C>0$ such that 
$$
{\rm div}({\bf b}^{\ep}_t(x))\leq \tilde C. 
$$

We choose $\beta = C_1$ in \eqref{takis10}. Recalling that $w\geq 0$,  we find, for some other $C>0$,  
\begin{align*}
e_t &\leq  \E\Bigl[ \int_t^T \int_{B_{\alpha+\beta s}}(  \tilde Cw_s -({\bf b}_s-{\bf b}^\ep_s)\cdot Dw_t -  \zeta_s) dyds\Bigr] +e_T \\
& \leq C\int_t^T e_sds +\E\Bigl[ \int_t^T \int_{B_{\alpha+\beta s}}(  -({\bf b}_s-{\bf b}^\ep_s)\cdot Dw_t -  \zeta_s) dyds\Bigr] +e_T.
\end{align*}
Using Gronwall's inequality and letting $\ep\to 0$, we get
\be\label{takis11}
e_t\leq C (e_T+ \E\Bigl[\int_t^T \int_{B_{\alpha+\beta s}} \zeta_s(x)dxds\Bigr]).
\ee
Next we note that, since, in view of Lemma~\ref{lem.reguuNMN},  $\|Du^N\|\leq C$, if $\omega^N_C$ is as in \eqref{takis8}, we have 
$$
\|\zeta_s\|_\infty\leq \|\phi'\|_\infty \omega^N_C.
$$

Assume next that $\phi$ is positive on $(-\infty,0)$, vanishes on $(0,+\infty)$ and  $\|\phi'\|_\infty\leq 2$. Since $\tilde u^N_T= \tilde u^K_T$, we have   $e_T=0$.  
\smallskip

Therefore, it follows from \eqref{takis11} and the above that, for all $t\in [0,T]$,  
$$
e_t\leq C_{\alpha,\beta} \E[\omega^N_C]. 
$$ 
Note that a standard  approximation argument implies  the same inequality for $\phi(s)= (-s)_+$.
\vskip.075in

Hence, for all $t\in [0,T]$,  we have  
$$
\E\left[ \int_{B_{\alpha+\beta t}} (- (\tilde u^N_t(x)- \tilde u^K_t(x))_+dx\right] \leq  C_{\alpha,\beta} \E[\omega^N_C],
$$
and, after exchanging the roles of $u^N$ and $u^K$, for all $t\in [0,T]$,  
$$
\E\left[ \int_{B_{\alpha+\beta t}} |\tilde u^N_t(x)- \tilde u^K_t(x)| dx\right] \leq  C_{\alpha,\beta} \E[\omega^N_C].
$$
Since $\alpha$ is arbitrary and the $\tilde u^N$'s are uniformly bounded and uniformly Lipschitz continuous in space, the   inequality $$\|u\|_{L^\infty(B_R)}\leq C_R \|Du\|_{L^\infty(B_R)}^{d/(d+1)}\|u\|^{1/(d+1)}_{L^1(B_R)}$$ yields that,
for all $R>0$, 
$$
\sup_{t\in [0,T]} \E\left[ \|\tilde u^N_t- \tilde u^K_t\|_{L^\infty(B_R)} ^{d+1}\right] \leq  C_R \E[\omega^N_C].
$$
In view of \eqref{H3}, it follows that $(\tilde  u^{N})_{N\in \N}$ is a Cauchy sequence for the family of seminorms 
$$
(\sup_{t\in [0,T]} \E l[\|\tilde u_t\|_{L^\infty(B_R)}^{d+1}])_{R>0}. 
$$
Thus, there is a subsequence, which we denote in the same way as the full sequence, along which the $\tilde  u^{N}$'s converge, in the seminorms above and for every $R>0$, to  a limit $\tilde u$, which, in view of the uniform estimates in Lemma~\ref{lem.reguuNMN}, 
is  Lipschitz continuous and semiconcave in $x$. Moreover, the process $(\tilde u_t(x))_{t\in [0,T]}$ is adapted to the filtration $({\mathcal F}_t)_{t\in [0,T]}$. Finally, up to a further subsequence, we can assume that $\tilde u^N$ converge  to $\tilde u$  locally uniformly in $x$,  $\P-$a.s. and a.e. $t\in [0,T]$. In view of the  uniform semiconcavity, the last observation  implies that, as $N\to \infty$ and $\P-$a.s. and for a.e. $(x,t)\in \R^d\times [0,T]$,  $D\tilde u_t^N(x) \to D\tilde u_t(x)$.
\smallskip

Let  $\omega\in \Omega$ and $t\in [0,T]$ be such that, as $N\to \infty$,  $\tilde u^N_t(\cdot,\omega)$ converges locally uniformly to $\tilde u_t(\cdot, \omega)$.  Integrating  \eqref{lkjnzrdgf}  over $[t,T]$, we then find, using Fubini's theorem,  that, for a.e. $x\in \R^d$, 
$$
\tilde u^N_t(x)=\tilde G(x) - \int_t^T \tilde H^N_s(D\tilde u_s^N(x),x) ds +\tilde M^N_t(x).  
$$
Since  $D\tilde u^N$ converges a.e. to $D\tilde u$ on $\R^d\times (0,T)$ and is bounded, we can pass in  the $N\to \infty$ limit in the equality above to get, for a.e. $x\in \R^d$,
\be\label{liauzqesnrdgc}
\lim_{N\to +\infty} \tilde M^N_t(x)
= 
 \tilde u_t(x)- \tilde G(x)  + \int_t^T \tilde H^N_s(D\tilde u_s^N(x),x) ds.
\ee
Hence, the $\tilde M^N$'s converge $\P-$a.s. and for a.e. $x\in \R^d$ to some bounded process denoted by $\tilde M$ and we have,  for a.e. $x\in \R^d$, 
$$
\tilde M_t(x) = \tilde u_t(x)- \tilde G(x)  + \int_t^T \tilde H^N_s(D\tilde u_s^N(x),x) ds.
$$
Since  $(\tilde M^N_{t_n}(x))_{t\in [0,T]}$ is a martingale in the filtration $(\mathcal F_{t_n})_{n=0,\ldots, N}$, it follows that $(\tilde M_t(x))_{t\in [0,T]}$ is, for a.e. $x\in \R^d$, a martingale in the filtration $(\mathcal F_t)_{t\in [0,T]}$. In particular, for a.e. $x\in \R^d$, $t\to \tilde M_t(x)$ is continuous, which shows that $t\to \tilde u_t(x)$ is continuous as well. 
Therefore $(\tilde u, \tilde M)$ is a solution to \eqref{BSDHJE_Intro}. 
\end{proof}

We conclude this subsection with the proof of Lemma \ref{lem.reguuNMN}. 

\begin{proof}[Proof of Lemma \ref{lem.reguuNMN}]
Since the estimates are standard, here  we explain only the formal ideas. As usual the  computations can be justified by vanishing viscosity-type arguments. 

\smallskip

The uniform $L^\infty$-bound follows by backward induction of  a straightforward application of the comparison principle, which implies that, for any $n\in \{0, \dots, N-1\}$,  
$$
\sup_{t\in [t^N_n,t^N_{n+1})} \|\tilde u^N_t\|_\infty\leq \|\tilde u^{N}_{t^{N,-}_{n+1}}\|_\infty +CN^{-1} \leq \|\tilde u^N_{t^{N,+}_{n+1}}\|_\infty +CN^{-1}, 
$$
where the last inequality holds because the conditional expectation is a contraction in the $L^\infty$-norm. 
\smallskip
%
\smallskip

For the semiconcavity estimate, we note that, if $v$ is a viscosity solution of the Hamilton-Jacobi equation 
$$
-\partial_t v +H_t(Dv,x)=0 \ \ {\rm in} \ \  \R^d\times (0,T) \quad  v_T(\cdot)=  v_T \ \ \text{in} \ \ \R^d, 
$$
with  $H$ satisfying \eqref{HH},  
then there exist $c_0, c_1>0$ such that, for any $|z|\leq 1$ and in the sense of distributions, 
\be\label{kjhenrdkfg}
\text{if} \ \ D^2v_T \  z\cdot z-\lambda v_T \leq c_1, \ \ \text{then} \ \  D^2v_t \ z\cdot z-\lambda v_t\leq c_1+c_0T. 
\ee
Indeed, for any $z\in \R^d$ with $|z|\leq 1$, the map $w_t(x)= D^2v_t(x)z\cdot z-\lambda v_t(x)$ satisfies  (formally) 
\begin{align*}
& -\partial_t w_t+ D_pH_t(Dv_t,x)\cdot Dw_t +  D^2_{pp}H_t(Dv_t,x) \ Dv_{t,z}\cdot Dv_{t,z} + 2D^2_{pz}H_t(Dv_t,x)\cdot Dv_{t,z}\\[2mm]
& \qquad + D^2_{zz}H_t(Dv_t,x) -\lambda (H_t(Dv_t,x)-D_pH_t(Dv_t,x)\cdot Dv_t)=0, 
\end{align*}
and, hence, in view of given \eqref{HH}, \eqref{kjhenrdkfg} follows from the comparison principle.
\smallskip

Applying \eqref{kjhenrdkfg} to $\tilde u^N$ provides the uniform semiconcavity estimate by a backward induction argument similar to the one for the $L^\infty$-bound. The bounds above immediately imply  the Lipschitz estimate of $D\tilde u^N$. 
\smallskip

Recall that, for each $x\in \R^d$, $(\tilde u^N(x))_{t\in[0,T]}$ is adapted to the filtration $(\mathcal F_t)_{t\in[0,T]}$ and $(\tilde M_t^N(x))_{t\in [0,T]}$ is a c\`adl\`ag martingale with respect to  the discrete filtration $(\mathcal F_{t_n})$. 
\smallskip

The bound  on $\tilde M^N$ follows from the observation that, since $\tilde M_T=0$, by induction we have, for $x\in \R^d$ and $\P-$a.s.,
$$
\tilde M^N_t(x) = \tilde  u^N_t(x)-\tilde  G (x)  + \int_t^T \tilde H^N_s(D\tilde u^N_s(x),x)ds.
$$
\end{proof}

\subsection{Comparison and uniqueness}\label{subseq.Comp}

We say that $(\tilde u^1,\tilde M^1)$ (resp. $(\tilde u^2,\tilde M^2)$) is a supersolution (resp. subsolution) of  \eqref{BSDHJE_Intro},  if $\tilde u^1$ (resp. $\tilde u^2$) satisfies all the conditions of Definition \ref{def.solHJ} but \eqref{def.eq} which is replaced by the requirement that,  $\P-$a.s.,  and for a.e $(x,t,t')\in \R^d\times [0,T]\times [0,T]$ with $t<t'$,
$$
\tilde  u^1_t(x) \geq \tilde u^1_{t'}(x) -\int_t^{t'} \tilde H_s(D\tilde u^1_s(x),x)ds -\tilde M^1_{t'}(x)+ \tilde M^1_t(x) \ \ \text{and} \ \  u^1_T\geq \tilde G \ \ \text{in} \ \ \R^d,\\
$$
$\Big(\text{resp.}$
$$
\tilde  u^2_t(x) \leq \tilde u^2_{t'}(x) -\int_t^{t'} \tilde H_s(D\tilde u^2_s(x),x)ds -\tilde M^2_{t'}(x)+ \tilde M^2_t(x) \ \ \text{and} \ \   u^2_T\leq \tilde G  \ \ \text{in} \ \ \R^d. \Big)
$$
The comparison result between supersolutions and subsolutions  is stated next.
\begin{prop}[Comparison]\label{prop.comparison} Assume  \eqref{H1}, \eqref{H2}, \eqref{H2} and \eqref{H3}, and 
let $(\tilde u^1,\tilde M^1)$ and $(\tilde u^2,\tilde M^2)$ be   respectively a supersolution and a subsolution of  \eqref{BSDHJE_Intro}. Then, $\P-$a.s., 
$\tilde u^1\geq \tilde u^2$ in $\R^d\times [0,T].$ 
\end{prop}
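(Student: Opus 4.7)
The plan is to reuse the Douglis-type computation from the proof of Theorem~\ref{thm.mainexists}, adapted to a single Hamiltonian $\tilde H$, with the terminal inequality $\tilde u^1_T \geq \tilde G \geq \tilde u^2_T$ playing the role that $\E[\omega^N_C]\to 0$ played there. Set $a_t(x) = \tilde u^1_t(x) - \tilde u^2_t(x)$ and $N_t(x) = \tilde M^1_t(x) - \tilde M^2_t(x)$; subtracting the subsolution inequality from the supersolution inequality gives, $\P$-a.s. and for a.e. $(x,t,t') \in \R^d \times [0,T]^2$ with $t < t'$,
\begin{equation*}
a_{t'}(x) - a_t(x) \leq \int_t^{t'}\bigl[\tilde H_s(D\tilde u^1_s(x), x) - \tilde H_s(D\tilde u^2_s(x), x)\bigr]\, ds + N_{t'}(x) - N_t(x).
\end{equation*}
I would then pick a smooth, convex, nonincreasing $\phi : \R \to [0,+\infty)$ with $\phi \equiv 0$ on $[0,+\infty)$, $\phi > 0$ on $(-\infty, 0)$ and $\|\phi'\|_\infty \leq 2$, and set $w_t(x) = \phi(a_t(x))$. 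Since $a_T \geq 0$ we get $w_T \equiv 0$.

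Next, discretize $[t,T]$ along $\theta^k_r = t + (r/k)(T-t)$ and combine the convex bound $\phi(a_{\theta^k_{r+1}}) - \phi(a_{\theta^k_r}) \geq \phi'(a_{\theta^k_r})(a_{\theta^k_{r+1}} - a_{\theta^k_r})$ with $\phi' \leq 0$ to reverse the above displayed inequality term by term. Summing over $r$, taking expectations, and exploiting the $\mathcal F_{\theta^k_r}$-measurability of $\phi'(a_{\theta^k_r})$ to cancel the martingale increments of $N$, and finally letting $k \to \infty$ via the continuity of $t \mapsto \tilde u^i_t(x)$ (which follows from $\tilde u^i \in \mathcal S^r(W^{1,1}_{\mathrm{loc}})$ together with the uniform Lipschitz bound in $x$ from Definition~\ref{def.solHJ}(ii)), I obtain
\begin{equation*}
\E[w_t(x)] \leq \E\left[\int_t^T \mathbf b_s(x) \cdot D w_s(x)\, ds\right],
\end{equation*}
where $\mathbf b_s(x) = \int_0^1 D_p\tilde H_s\bigl((1-\lambda)D\tilde u^2_s(x) + \lambda D\tilde u^1_s(x), x\bigr)\, d\lambda$, using $\phi'(a_s)\, Da_s = Dw_s$.

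From here the argument proceeds essentially verbatim as in \eqref{takis2}--\eqref{takis11}: integrate over $B_{\alpha+\beta s}$, regularize $\mathbf b$ to $\mathbf b^\ep$ by convolution, use \eqref{H2} together with the $L^\infty$-bound on $D\tilde u^i$ and their uniform semiconcavity (both built into Definition~\ref{def.solHJ}(ii)) to control $\mathrm{div}(\mathbf b^\ep)$ from above uniformly in $\ep$ and in $\omega$, choose $\beta$ larger than $\|\mathbf b\|_\infty$ to absorb the boundary term, apply Gronwall, and let $\ep \to 0$ to eliminate the $(\mathbf b - \mathbf b^\ep)$ term. This gives $e_t := \E\bigl[\int_{B_{\alpha+\beta t}} w_t\, dx\bigr] = 0$ for every $t \in [0,T]$ (the boundary term $e_T$ vanishes because $w_T \equiv 0$). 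Approximating $\phi$ by $(-s)_+$ and then letting $\alpha \to \infty$, I conclude $\E\bigl[\int_{B_R}(\tilde u^2_t - \tilde u^1_t)_+\, dx\bigr] = 0$ for every $t$ and $R$, so $\tilde u^1 \geq \tilde u^2$ a.e.\ in $\R^d \times [0,T]$ $\P$-a.s., which upgrades to the everywhere inequality by joint continuity of the $\tilde u^i$.

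The main obstacle I anticipate is the justification of the discrete-to-continuous limit: the super/subsolution inequalities are postulated only for a.e.\ pair $(t,t')$, so one must select the discretization nodes $\theta^k_r$ inside a full-measure set of admissible times and invoke the continuity of $t \mapsto \tilde u^i_t(x)$ in order to recognize the Riemann sums as the integral $\int_t^T \phi'(a_s)(\tilde H_s(D\tilde u^1_s, x)-\tilde H_s(D\tilde u^2_s, x))\,ds$ in the limit. The remaining nontrivial ingredient, the one-sided bound on $\mathrm{div}(\mathbf b^\ep)$, is obtained exactly as in the proof of Theorem~\ref{thm.mainexists} from the convexity of $\tilde H$ in $p$ combined with the uniform semiconcavity of $\tilde u^1, \tilde u^2$ encoded in Definition~\ref{def.solHJ}.
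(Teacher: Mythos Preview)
Your proposal is correct and follows essentially the same Douglis-type route as the paper: set $w_t=\phi(\tilde u^1_t-\tilde u^2_t)$ for a smooth convex nonincreasing $\phi$, derive a one-sided differential inequality for $w$ involving the averaged drift ${\bf b}$, regularize ${\bf b}$, integrate over expanding balls, and close with Gronwall. The only notable difference is that you obtain the inequality for $w$ via a time-discretization and the convexity bound $\phi(b)-\phi(a)\geq\phi'(a)(b-a)$, whereas the paper invokes It\^o's formula (dropping the nonnegative $\tfrac12\phi''\,d\langle\tilde M\rangle$ term); your route is arguably cleaner here since the sub/supersolution conditions are integral inequalities rather than equalities, and you correctly flag the a.e.-in-$(t,t')$ issue---note also a harmless missing minus sign in your displayed bound for $\E[w_t(x)]$.
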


The following uniqueness  result follows immediately. 
\begin{cor}[Uniqueness]
Assume  \eqref{H1}, \eqref{H2} and \eqref{H3}.  Then  there exists a unique solution to \eqref{BSDHJE_Intro}.
\end{cor}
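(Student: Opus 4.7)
The plan is straightforward: the corollary will be obtained by combining Theorem~\ref{thm.mainexists} with Proposition~\ref{prop.comparison}, so there is essentially no new work, only a careful unpacking of what it means for a solution to be both a supersolution and a subsolution.

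First I would dispose of existence: Theorem~\ref{thm.mainexists} produces, under \eqref{H1}--\eqref{H3}, a pair $(\tilde u,\tilde M)$ satisfying Definition~\ref{def.solHJ}, so there is nothing further to prove on that side.

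For uniqueness, let $(\tilde u^1,\tilde M^1)$ and $(\tilde u^2,\tilde M^2)$ be two solutions in the sense of Definition~\ref{def.solHJ}. The key observation is that, because \eqref{def.eq} holds as an equality for a.e.\ $(x,t)\in \R^d\times [0,T]$ and $\P$-a.s., each solution is simultaneously a supersolution and a subsolution in the sense defined just before Proposition~\ref{prop.comparison} (the terminal inequalities $\tilde u_T\ge \tilde G$ and $\tilde u_T\le \tilde G$ both hold as equalities, and the time-integrated inequalities become equalities as well). Applying Proposition~\ref{prop.comparison} once with $(\tilde u^1,\tilde M^1)$ as supersolution and $(\tilde u^2,\tilde M^2)$ as subsolution gives $\tilde u^1\ge \tilde u^2$ in $\R^d\times[0,T]$, $\P$-a.s.; swapping the roles of the two pairs gives the reverse inequality. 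Hence $\tilde u^1=\tilde u^2$, $\P$-a.s., in $\R^d\times[0,T]$.

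It remains to recover $\tilde M^1=\tilde M^2$. For this I would go back to the identity \eqref{def.eq} satisfied by both pairs: subtracting them yields, for a.e.\ $(x,t)\in \R^d\times[0,T]$ and $\P$-a.s.,
\[
(\tilde M^1_t(x)-\tilde M^2_t(x))-(\tilde M^1_T(x)-\tilde M^2_T(x))
= \int_t^T \bigl(\tilde H_s(D\tilde u^1_s(x),x)-\tilde H_s(D\tilde u^2_s(x),x)\bigr)ds,
\]
and the right-hand side vanishes because $\tilde u^1=\tilde u^2$ (and both are, a.e., differentiable in $x$ with the same gradient, by the uniform semiconcavity and bound on $D\tilde u^i$ in (ii) of Definition~\ref{def.solHJ}). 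Using the terminal identity $\tilde u_T=\tilde G$ to fix the martingale at $t=T$ and the continuity of $t\mapsto \tilde M^i_t(x)$ from (iii) of Definition~\ref{def.solHJ}, one concludes $\tilde M^1=\tilde M^2$ as continuous processes, for a.e.\ $x\in\R^d$. There is no genuine obstacle here; the only mild subtlety is making sure that equality of $\tilde u^1$ and $\tilde u^2$ as elements of $\mathcal{S}^r(W^{1,1}_{loc}(\R^d))$ is enough to identify $D\tilde u^1=D\tilde u^2$ almost everywhere on $\R^d\times[0,T]$ and thereby kill the Hamiltonian difference above, which is immediate.
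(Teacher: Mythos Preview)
Your proposal is correct and follows exactly the approach the paper intends: the corollary is stated immediately after Proposition~\ref{prop.comparison} with no proof beyond the phrase ``follows immediately,'' and your argument (solution $\Rightarrow$ both sub- and supersolution, apply comparison twice, then read off $\tilde M^1=\tilde M^2$ from \eqref{def.eq}) is the natural unpacking of that remark. The only minor quibble is that \eqref{def.eq} determines $\tilde M$ only up to an additive $\mathcal F_0$-measurable function of $x$, so strictly speaking one needs a normalization such as $\tilde M_0=0$ (implicit in the paper's construction) to conclude $\tilde M^1=\tilde M^2$; this is a cosmetic point the paper itself does not address.
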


\begin{proof}[Proof of Proposition~\ref{prop.comparison}]  The proof follows  again Douglis's uniqueness proof and is very similar with the proof of Theorem~\ref{thm.mainexists} with $\tilde u^1$ and $\tilde u^2$ in place of $\tilde u^N$ and $\tilde u^K$. Hence, in what follows we present a brief sketch.
\smallskip

Fix a smooth, convex and nonincreasing map $\phi:\R\to \R^+$, and  let $w_t(x)= \phi(\tilde u^1_t(x)-\tilde u^2_t(x))$. Then by It\^o's formula and the inequalities  satisfied by the $\tilde u^i$'s, we have 
\begin{align*}
dw_t(x) & \geq \phi' (\tilde H_t(D\tilde u^1_t(x),x)-\tilde H_t(D\tilde u^2_t(x),x)) dt +\frac12 \phi'' d<\tilde M(x)>_t+ \phi' d\tilde M_t(x) \\
& \geq  {\bf b}_t(x)\cdot D w_t(x)dt+ \phi' d\tilde M_t(x), 
\end{align*}
where $\phi$ and its derivatives are evaluated at $\tilde u^1_t(x)-\tilde u^2_t(x)$,  $\tilde M_t= \tilde M^1_t-\tilde M^2_t$
and 
$$
{\bf b}_t(x)= \int_0^1 D_p\tilde H_t((1-s)D\tilde u^1_t(x)+sD\tilde u^2_t(x))ds. 
$$ 
The rest of the proof follows almost verbatim the arguments of the proof of Theorem~\ref{thm.mainexists}. It consists of an  appropriate regularization ${\bf b}^\ep$ similar to the one in the aforementioned proof and a rewriting of the inequality satisfied by  $w_t$ as 
\begin{align*}
dw_t(x) &  \geq ( {\rm div}({\bf b}^\ep_t(x) w_t) -{\rm div}({\bf b}^\ep_t(x)) w_t +({\bf b}_t-{\bf b}^\ep_t)\cdot Dw_t )dt + \phi' d\tilde M_t.
\end{align*}
Next we consider the quantity 
$$
e_t = \E\left[ \int_{B_{\alpha+\beta t}} w_t(x)dx,\right] 
$$
and we find, as in the proof of Theorem  \ref{thm.mainexists}, for $t_1 \in [0,T]$, 
\begin{align*}
e_T-e_{t_1} &  \geq  \E\Bigl[  \int_{t_1}^T \int_{B_{\alpha+\beta t}}  ({\rm div}({\bf b}^\ep_t(x) w_t) -{\rm div}({\bf b}^\ep_t(x)) w_t +({\bf b}_t-{\bf b}^\ep_t)\cdot Dw_t ) dydt  \\
& \qquad + \beta \int_{t_1}^T \int_{\partial B_{\alpha+\beta t}} w_t(y)dy dt\Bigr] \\
& = \E\Bigl[  \int_{t_1}^T \int_{\partial B_{\alpha+\beta t}}  ({\bf b}^\ep_t(x)\cdot \nu_y + \beta) w_t dydt \\ 
 & \qquad   +\int_{t_1}^T \int_{B_{\alpha+\beta t}}( 
 -{\rm div}({\bf b}^\ep_t(x)) w_t +({\bf b}_t-{\bf b}^\ep_t)\cdot Dw_t ) dydt \Bigr] . 
\end{align*}

The properties of  ${\bf b}^\ep$, a suitable choice of $\beta$ and Grownwall's inequality lead after letting $\ep\to$ to
$$
e_{t_1}\leq e^{CT} e_T.
$$
We choose (after approximation) $\phi(r)= (-r)_+$. Then $e_T=0$ since $\tilde u^1_T\geq \tilde u^2_T$. Therefore $e_t=0$ for any $t$, which shows that  $\tilde u^1\geq \tilde u^2$ since $\alpha$ is arbitrary. 

\end{proof}


\subsection{Optimal control representation} \label{subseq.OCrep} We develop a stochastic optimal control formulation for $\tilde u$ and present  a stochastic maximum principle-type result. 
\smallskip

In what follows, $\tilde L$ is the Legendre transform of $\tilde H$, that is, for $x,\alpha \in \R^d, t\in [0,T]$ and $\omega \in \Omega$, 
$$
\tilde L_t(\alpha, x,\omega)=\sup_{p\in \R^d} [ -p\cdot \alpha - \tilde H_t(p,x,\omega)].
$$
Moreover, for $x\in \R^d$ and $t \in [0,T]$, $\mathcal A_{t, x}$ is the set of admissible paths defined by 
$$
\mathcal A_{t,x}=\{\gamma \in \mathcal S^2(\R^d): \; \gamma_t=x\; \text{and}\; \gamma \in H^1([t,T];\R^d) \; \text{a.s.} \}.
$$
\begin{prop}\label{prop.repsol} Assume \eqref{H1}, \eqref{H2}, \eqref{H2} and \eqref{H3}, and let $\tilde u$ be the solution of \eqref{BSDHJE_Intro}. Then 
\be\label{eq.opticontrolrep}
\tilde u_t(x) =\underset{\gamma \in {\mathcal A_{t,x}}}{\rm essinf}
\;  \E\left[ \int_t^T \tilde L_s(\dot \gamma_s,\gamma_s)ds +\tilde G(\gamma_T)\ |\ {\mathcal F}_t\right].
\ee
\end{prop}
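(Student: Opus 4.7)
The plan is to prove the two inequalities $\tilde u_t(x) \leq \mathrm{RHS}$ and $\tilde u_t(x) \geq \mathrm{RHS}$ in \eqref{eq.opticontrolrep} separately. The first (sub-optimality) direction rests on a chain-rule computation along an arbitrary admissible path combined with the Fenchel-Young inequality, while the second (optimality) direction is obtained by passing to the limit in the discrete dynamic programming principle for the approximations $\tilde u^N$ built in the proof of Theorem~\ref{thm.mainexists}.

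For the upper bound, I would fix $\gamma \in \mathcal{A}_{t,x}$ and compute, at least formally, the dynamics of the process $\tilde u_s(\gamma_s)$. From \eqref{def.eq} one reads $d\tilde u_s(x) = \tilde H_s(D\tilde u_s(x),x)\, ds + d\tilde M_s(x)$, so since $\gamma$ is absolutely continuous and of finite variation,
$$d\bigl[\tilde u_s(\gamma_s)\bigr] = \bigl[\tilde H_s(D\tilde u_s(\gamma_s),\gamma_s) + D\tilde u_s(\gamma_s)\cdot \dot \gamma_s\bigr] ds + d\tilde M_s(\gamma_s).$$
The Fenchel-Young inequality $\tilde H_s(p,x) + p\cdot \alpha \geq -\tilde L_s(\alpha,x)$, together with integration from $t$ to $T$ and the terminal condition $\tilde u_T = \tilde G$, yields
$$\tilde u_t(x) \leq \tilde G(\gamma_T) + \int_t^T \tilde L_s(\dot\gamma_s,\gamma_s)\, ds - \bigl(\tilde M_T(\gamma_T) - \tilde M_t(\gamma_t)\bigr).$$
Taking $\E[\cdot\mid \mathcal F_t]$ and exploiting the martingale property of $\tilde M(x)$ jointly with the predictability of $\gamma$, so that the composition $s\mapsto \tilde M_s(\gamma_s)$ has vanishing conditional increment, delivers \eqref{eq.opticontrolrep} with ``$\leq$''. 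The chain rule is justified by regularizing $\tilde u$ in space with a mollifier $\xi^\varepsilon$, applying the chain rule to the smooth field $\tilde u \ast \xi^\varepsilon$, and letting $\varepsilon \to 0$ by means of the uniform semiconcavity and Lipschitz bounds of Definition~\ref{def.solHJ}(ii) and dominated convergence.

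For the lower bound, the cleanest route is via the discretization of the existence proof. On each slab $[t_n^N, t_{n+1}^N)$, $\tilde u^N$ solves, for frozen $\omega$, a deterministic HJ equation with terminal datum $\tilde u^N_{t_{n+1}^{N,-}}$, so classical control theory gives
$$\tilde u^N_{t_n^N}(x) = \inf_{\gamma \in H^1([t_n^N,t_{n+1}^N])} \Bigl\{\int_{t_n^N}^{t_{n+1}^N} \tilde L^N_s(\dot\gamma_s,\gamma_s)\, ds + \tilde u^N_{t_{n+1}^{N,-}}(\gamma_{t_{n+1}^N})\Bigr\},$$
together with $\tilde u^N_{t_{n+1}^{N,-}} = \E[\tilde u^N_{t_{n+1}^{N,+}} \mid \mathcal F_{t_n^N}]$. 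A backward induction on $n$, concatenating $(\mathcal F^N_t)$-predictable paths and invoking the tower property, then produces the full discrete representation
$$\tilde u^N_{t_n^N}(x) = \underset{\gamma \in \mathcal A^N_{t_n^N,x}}{\mathrm{essinf}}\; \E\Bigl[\int_{t_n^N}^T \tilde L^N_s(\dot\gamma_s,\gamma_s)\, ds + \tilde G(\gamma_T) \;\Big|\; \mathcal F_{t_n^N}\Bigr].$$
Passing to the limit $N\to \infty$ combines the convergence $\tilde u^N \to \tilde u$ and $\tilde L^N \to \tilde L$ from Theorem~\ref{thm.mainexists} with the coercivity of $\tilde L$ derived from \eqref{H2}, which supplies $H^1$-compactness of near-minimizing trajectories. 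Upper semicontinuity of the cost along the limit and lower semicontinuity through Fatou's lemma on the one side, and approximation of any fixed admissible $\gamma$ by $(\mathcal F^N)$-predictable competitors on the other, close the identification.

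The main obstacle is to justify rigorously the differential manipulation of $\tilde u_s(\gamma_s)$ underlying the upper bound, since $\tilde u$ is only Lipschitz and semiconcave in space and $\tilde M(x)$ is a random field whose regularity in $x$ is guaranteed only through boundedness. The mollification argument produces commutator errors of the form $(\tilde H_s(D\tilde u_s,\cdot)) \ast \xi^\varepsilon - \tilde H_s(D(\tilde u_s\ast \xi^\varepsilon),\cdot)$, which must be controlled; here the uniform semiconcavity of $\tilde u$ forces $D(\tilde u_s \ast \xi^\varepsilon) \to D\tilde u_s$ almost everywhere, and local Lipschitz continuity of $\tilde H$ in $p$ from \eqref{H2} then allows dominated passage to the limit in the nonlinear term. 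A secondary subtlety is to show that $\E[\tilde M_T(\gamma_T) - \tilde M_t(\gamma_t) \mid \mathcal F_t] = 0$ along a general adapted $\gamma$; this is handled by approximating $\gamma$ by $(\mathcal F_{t_k})$-piecewise-constant paths on a refining partition and using the martingale property of $\tilde M(x)$ for a.e. fixed $x$ combined with Fubini's theorem.
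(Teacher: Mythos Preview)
Your lower-bound argument via the discretization is essentially the paper's route, but the paper uses the discretization for \emph{both} inequalities, not just one. At the discrete level the paper establishes the full identity
\[
\tilde u^N_0(x)=\inf_{\gamma\in\mathcal A_{0,x}}\E\Bigl[\int_0^T \tilde L^N_s(\dot\gamma_s,\gamma_s)\,ds+\tilde G(\gamma_T)\Bigr],
\]
by concatenating deterministic minimizers on each slab (measurable selection), and then lets $N\to\infty$ on both sides: the upper bound on $\tilde u_0(x)$ comes from testing the discrete identity against any fixed $\gamma\in\mathcal A_{0,x}$ and using $\tilde L^N\to\tilde L$; the lower bound comes from the discrete minimizers $\tilde\gamma^N$ (which have uniformly bounded velocity, so no $H^1$-compactness argument is needed) together with the modulus $\E[\omega^N_C]$. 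No chain rule along random paths is ever invoked.

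Your direct upper-bound argument has a genuine gap that the proposed fixes do not close. The chain rule for $\tilde u_s(\gamma_s)$ and the identity $\E[\tilde M_T(\gamma_T)-\tilde M_t(\gamma_t)\mid\mathcal F_t]=0$ both require evaluating objects that are defined only for a.e.\ $x$ along an arbitrary adapted path $\gamma$, which need not avoid the exceptional set. Mollification does make $\tilde M^\varepsilon(\cdot)$ a martingale at every point and the chain rule valid, but then two errors must be controlled \emph{at} $\gamma_s$: the commutator $[\tilde H_s(D\tilde u_s,\cdot)\ast\xi^\varepsilon](\gamma_s)-\tilde H_s(D\tilde u^\varepsilon_s(\gamma_s),\gamma_s)$, which converges to zero only in $L^p_{loc}(\R^d)$ and not pointwise along $\gamma$; and, in your piecewise-constant decomposition of the martingale term, the spatial increments $\tilde M^\varepsilon_{t_{j+1}}(\gamma_{t_{j+1}})-\tilde M^\varepsilon_{t_{j+1}}(\gamma_{t_j})$, which are bounded by $\|D_x\tilde M^\varepsilon\|_\infty|\gamma_{t_{j+1}}-\gamma_{t_j}|$ with $\|D_x\tilde M^\varepsilon\|_\infty\to\infty$ as $\varepsilon\to0$ (Definition~\ref{def.solHJ} gives no Lipschitz bound on $\tilde M$). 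The paper's discretization avoids all of this precisely because at each level $N$ the value function solves a classical HJ equation and the martingale increments are handled by conditioning at the partition times, never by composing a random field with a random path.
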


\begin{proof} To simplify the notation, we present the proof for $t=0$.  Let $(\tilde u^N)_{N\geq 1}$ be  as in the proof of Theorem \ref{thm.mainexists}. Since  on each time interval $(t^N_n,t^N_{n+1})$,  $\tilde u^N$ is a viscosity solution of a standard HJ equation, for any fixed $\omega$ and $\tilde L^N$ the Legendre transform of $\tilde H^N$,  we have 
$$
\tilde u^N_{t^N_n}(x)=\inf_{\gamma \in  \tilde{ \mathcal A}_{t^N_n, t^N_{n+1},x}} 
  \int_{t^N_n}^{t^N_{n+1}} \tilde L^N_s(\dot \gamma_s, \gamma_s)ds + \tilde u^N_{t^{N,-}_{n+1}}(\gamma_{t^N_{n+1}}),
$$
where $$\tilde{ \mathcal A}_{t, s, x}= \{\gamma \in H^1([t,s];\R^d): \   \gamma_t=x \}.$$
The   ${\mathcal F}_{t^N_n}-$measurability of $\tilde u^N_{t_{n+1}^-}$ allows to find a ${\mathcal F}_{t_n}-$measurable selection 

$(\omega,x)\to (\tilde \gamma^N_t (\omega,x))_{t\in [t^N_n,t^N_{n+1}]}$ of minimizers. Note that, since $D\tilde u^N$ is uniformly bounded, $\dot{\tilde \gamma}^N$ is uniformly bounded as well by some constant $C$. 
\smallskip

Concatenating these minimizers, we find, for any $x\in \R^d$, a $({\mathcal F}_{t^N_n})_{n=0,\ldots,N}-$adapted  path $\tilde \gamma^N  \in \mathcal A_{0,x}$  
such that 
$$
\tilde u^N_0(x)= \E\left[ \int_0^T \tilde L^N_s(\dot{\tilde \gamma}^N_s,\tilde  \gamma^N_s)ds + \tilde G(\tilde \gamma^N_T)\right] =\inf_{\gamma \in \mathcal A_{0,x}} \E\left[ \int_0^T \tilde L^N_s(\dot \gamma_s, \gamma_s)ds + \tilde G(\gamma_T)\right],
$$
where the second equality can be proved by dynamic programming and induction. 
\smallskip

Then, in view of  the continuity of the filtration $({\mathcal F}_t)_{t\in [0,T]}$ and the definition of $\tilde L^N$, we find 
$$
 \tilde u_0(x) = \lim_{N\to \infty} u^N_0(x)\leq \underset{\gamma \in \mathcal A_{0,x}}\inf \E\left[ \int_0^T \tilde L_s(\dot \gamma_s, \gamma_s)ds + \tilde G(\gamma_T)\right].
$$
On the other hand, the time regularity of $\tilde L$ and the uniform in $N$ bound  on  $\dot{\tilde \gamma}^N$, which we denote by $C$, imply that 
$$
 \tilde u^N_0(x) \geq  
 \E\left[ \int_0^T \tilde L_s(\dot{\tilde \gamma}^N_s,\tilde  \gamma^N_s)ds + \tilde G(\tilde \gamma^N_T)\right]-\E[\omega^N_C]. 
 $$
It follows  that 
 \begin{align*}
 \tilde u_0(x) & = \lim_{N\to \infty} u^N_0(x) \geq \limsup_{N\to \infty}  \Big[\underset{\gamma \in \mathcal A_{0,x}} \inf \ \E\left[ \int_0^T \tilde L_s(\dot \gamma_s, \gamma_s)ds + \tilde G(\gamma_T)\right] -\E[\omega^N_C] \Big]\\[2mm]
 &= \underset{\gamma \in \mathcal A_{0,x}} \inf \E\left[ \int_0^T \tilde L_s(\dot \gamma_s, \gamma_s)ds + \tilde G(\gamma_T)\right].
 \end{align*}
\end{proof}

We now discuss the maximum principle and the regularity of the value function along optimal solutions. We point, however, that we do not claim the existence of an optimal solution. 

\begin{thm}[Maximum principle]\label{thm.MaxPple}  Assume \eqref{H1}, \eqref{H2}, \eqref{H2} and \eqref{H3}, let $\bar \gamma \in \mathcal A_{0,x}$ be optimal for $\tilde u_0(x)$ and define, for $t\in [0,T]$,  the  $({\mathcal F}_t)_{t\in [0,T]}-$adapted continuous process $\bar p$ by 
\be\label{defbarpt}
\bar p_t =\E\left[  \int_t^T D_x\tilde L(\dot{\bar \gamma}_s, \bar \gamma_s)ds +D\tilde G(\bar \gamma_T)\ \Big| \ {\mathcal F}_t\right].
\ee
Then,  for $t\in [0,T]$,
\be\label{eqEuler}
\dot{\bar \gamma}_t= -D_p\tilde H_t(\bar p_t, \bar \gamma_t)
\ee
and $\bar p$ solves the BSDE
\be\label{eqEulerq}
d\bar p_t = D_x\tilde H_t(\bar p_t, \bar \gamma_t)dt + d \overline m_t  \ \ \text{in} \ \  [0,T], \quad \bar p_T= D\tilde G(\bar \gamma_T),
\ee
where $( \overline m_t)_{t\in [0,T]}\in \mathcal S^r(\R)$ is a continuous martingale. 
\end{thm}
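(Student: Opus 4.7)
The strategy is to impose first-order optimality under adapted perturbations of $\bar\gamma$, derive the Euler--Lagrange identity in Lagrangian form, and then dualize via the Legendre transform to pass from $\tilde L$ to $\tilde H$; the BSDE structure of $\bar p$ then falls out directly from the martingale property of the conditional expectation \eqref{defbarpt}. Throughout, write $J(\gamma) := \int_0^T \tilde L_s(\dot\gamma_s,\gamma_s)\,ds + \tilde G(\gamma_T)$.

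For every bounded $(\mathcal F_t)$-adapted $\eta\in \mathcal A_{0,0}$ with $\dot\eta\in L^\infty$, the perturbed path $\bar\gamma+\varepsilon\eta$ lies in $\mathcal A_{0,x}$, and the optimality of $\bar\gamma$ for $\tilde u_0(x)$ (Proposition~\ref{prop.repsol}) gives $\E[J(\bar\gamma)]\leq \E[J(\bar\gamma+\varepsilon\eta)]$ for all $\varepsilon\in \R$. Differentiating under the expectation at $\varepsilon=0$, which is justified by the $C^2$-regularity of $\tilde L$ and $\tilde G$, and using Fubini together with $\eta_s=\int_0^s\dot\eta_r\,dr$, I get
\[ 0 = \E\Bigl[\int_0^T \dot\eta_s\cdot\bigl(D_p\tilde L_s(\dot{\bar\gamma}_s,\bar\gamma_s)+\Xi_s\bigr)\,ds\Bigr], \qquad \Xi_s:=\int_s^T D_x\tilde L_r\,dr + D\tilde G(\bar\gamma_T). \]
Since $\dot\eta_s$ is $\mathcal F_s$-measurable, $\Xi_s$ may be replaced by $\E[\Xi_s\mid\mathcal F_s]=\bar p_s$, and letting $\dot\eta$ range over a dense family of bounded adapted processes yields the pointwise a.e. identity $\bar p_s+D_p\tilde L_s(\dot{\bar\gamma}_s,\bar\gamma_s)=0$.

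By the convexity and regularity of $\tilde H$ built into \eqref{H2}, the Legendre conjugate $\tilde L$ is strictly convex in $\alpha$, and the matched-pair identities
\[ \bar p = -D_\alpha\tilde L(\alpha,x) \iff \alpha = -D_p\tilde H(\bar p,x), \qquad D_x\tilde L(\alpha,x)=-D_x\tilde H(\bar p,x) \]
immediately give \eqref{eqEuler} together with $D_x\tilde L_t(\dot{\bar\gamma}_t,\bar\gamma_t)=-D_x\tilde H_t(\bar p_t,\bar\gamma_t)$. Setting
\[ \overline m_t := \E\Bigl[\int_0^T D_x\tilde L_s(\dot{\bar\gamma}_s,\bar\gamma_s)\,ds+D\tilde G(\bar\gamma_T)\,\Big|\,\mathcal F_t\Bigr] \]
defines a continuous $\mathcal S^r$-martingale, by continuity of the Brownian filtration and the uniform bounds coming from \eqref{H1}--\eqref{H2}. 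Writing $\overline m_t=\bar p_t+\int_0^t D_x\tilde L_s\,ds$ and combining with the previous identity delivers the BSDE \eqref{eqEulerq}, while the terminal condition $\bar p_T=D\tilde G(\bar\gamma_T)$ is read off \eqref{defbarpt} at $t=T$.

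The principal obstacle is rigorously justifying the variation in the first step: since $\mathcal A_{0,x}$ only demands $\dot{\bar\gamma}\in L^2$ almost surely, and since the $C^2$-estimates in \eqref{H2} on $\tilde H$ are only local in $p$, one cannot blindly pass the $\varepsilon$-derivative inside the expectation. I would handle this by first establishing an a priori $L^2$-bound on $\dot{\bar\gamma}$ via the quadratic coercivity of $\tilde L$ inherited from (H2)(iii) (comparing $J(\bar\gamma)$ with $J$ evaluated at the constant path $\gamma\equiv x$), and then truncating admissible perturbations to the sets $\{|\dot{\bar\gamma}|\leq R\}$ to perform the differentiation legally, closing with dominated convergence as $R\to\infty$.
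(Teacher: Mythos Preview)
Your argument is correct in outline and reaches the same conclusion, but it proceeds by a genuinely different route than the paper. You use the classical calculus-of-variations technique: perturb $\bar\gamma$ globally by $\varepsilon\eta$ with $\eta$ adapted, differentiate $\E[J(\bar\gamma+\varepsilon\eta)]$ at $\varepsilon=0$, and exploit the tower property to replace the anticipating integrand $\Xi_s$ by the adapted $\bar p_s$. The paper instead uses a Pontryagin-style \emph{needle variation}: at a fixed time $t$ where $\bar\gamma$ is differentiable, it replaces $\dot{\bar\gamma}$ by an arbitrary $\mathcal F_t$-measurable constant $v$ on $[t,t+h]$, compares costs via the dynamic programming inequality, divides by $h$, and sends $h\to 0$. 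This yields directly the pointwise inequality $\tilde L_t(v,\bar\gamma_t)-\tilde L_t(\dot{\bar\gamma}_t,\bar\gamma_t)+(v-\dot{\bar\gamma}_t)\cdot\bar p_t\geq 0$ for all $v$, whence $D_\alpha\tilde L_t(\dot{\bar\gamma}_t,\bar\gamma_t)+\bar p_t=0$.

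The trade-off is this. Your approach is conceptually cleaner and gives the Euler--Lagrange equation for all $t$ at once, but the price is the integrability obstacle you identify: differentiating under $\E$ needs a dominated-convergence argument, and your proposed remedy (quadratic coercivity of $\tilde L$ from (H2)(iii)) is not obviously available, since (H2)(iii) encodes a semiconcavity-type lower bound rather than superlinear growth of $\tilde L$ in $\alpha$. The paper's needle variation sidesteps this entirely: it never differentiates in $\varepsilon$, only compares costs and invokes the Lebesgue differentiation theorem at a single time, so the only regularity used is $\bar\gamma\in H^1$ a.s.\ and continuity of $\tilde L$. Once \eqref{eqEuler} is established, your derivation of the BSDE \eqref{eqEulerq} via the martingale $\overline m_t$ is essentially identical to the paper's.
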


\begin{rmk}{\rm The theorem implies that $\bar \gamma$ is of class $C^1$.}\end{rmk}

\begin{proof} Fix $h>0$  small and $t\in [0,T)$  at which $\bar \gamma$ is differentiable $\P-$a.s.,  and let  $v\in L^\infty(\Omega, \R^d)$ be ${\mathcal F}_t$-measuarable. 
\smallskip

Define $\gamma^h$ by $\gamma^h(0)=x$ and 
$$
\dot\gamma^h_s = \left\{\begin{array}{ll}
\dot{\bar \gamma}_s & {\rm if }\; s\in [0,t]\cup [t+h, T],\\[2mm]
v & {\rm otherwise},
\end{array}\right.
$$
note that, since for $s\geq t+h$,  
\be\label{helskjrndc}
\gamma^h_s = \bar \gamma_s + hv-(\bar \gamma_{t+h}-\bar \gamma_t), 
\ee
$\gamma^h$ is admissible, and use the dynamic programming principle to get 
\begin{align*}
 & \E\left[ \int_t^T \tilde L_s(\dot \gamma^h_s,\gamma^h_s)ds +\tilde G(\gamma^h_T)\ |\ {\mathcal F}_t\right]
 \geq  \E\left[ \int_t^T \tilde L_s(\dot{\bar \gamma_s},\bar \gamma_s)ds +\tilde G(\bar \gamma_T)\ |\ {\mathcal F}_t\right].
\end{align*}
Hence 
\begin{align*}
 & \E\Bigl[ \int_t^{t+h} (\tilde L_s(v,\gamma^h_s)- \tilde L_s(\dot{\bar \gamma_s},\bar \gamma_s))ds\\
 & \qquad  +\int_{t+h}^T (\tilde L_s(\dot{\bar \gamma}_s,\gamma^h_s)
 - \tilde L_s(\dot{\bar \gamma}_s,\bar \gamma_s))ds  +\tilde G(\gamma^h_T)-\tilde G(\bar \gamma_T)\ |\ {\mathcal F}_t\Bigr]
\ \geq \; 0.
\end{align*}
Then 
\begin{align*}
 & \E\Bigl[ \int_t^{t+h} (\tilde L_s(v,\gamma^h_s)- \tilde L_s(\dot{\bar \gamma_s},\bar \gamma_s))ds\\
 & +\int_{t+h}^T (\tilde L_s(\dot{\bar \gamma}_s,\gamma^h_s)
 - \tilde L_s(\dot{\bar \gamma}_s,\bar \gamma_s))ds  + D\tilde G(\bar \gamma_T) \cdot (\gamma^h_T-\bar \gamma_T) + C |\gamma^h_T-\bar \gamma_T|^2 \ |\ {\mathcal F}_t\Bigr]
\ \geq \; 0.
\end{align*}
Dividing by $h$, letting $h\to 0^+$ and using    \eqref{helskjrndc} we find 
\begin{align*}
 & \tilde L_t(v,\bar \gamma_t)- \tilde L_t(\bar \gamma_t,\bar \gamma_t) + 
(v- \dot{\bar \gamma}_t)\cdot \E\left[ \int_{t}^T D_x\tilde L_s(\dot{\bar \gamma}_s,\bar \gamma_s)ds  +D\tilde G(\bar \gamma_T)\ |\ {\mathcal F}_t\right]\geq 0.
\end{align*}
Since $v\in {\mathcal F}_t$ is arbitrary, we conclude that, $\P-$a.s., 
$$
D_\alpha \tilde L_t(\bar \gamma_t,\bar \gamma_t) + \E\left[ \int_{t}^T D_x\tilde L_s(\dot{\bar \gamma}_s,\bar \gamma_s)ds  +D\tilde G(\bar \gamma_T)\ |\ {\mathcal F}_t\right]=0,
$$
and  \eqref{eqEuler} holds with $\bar p_t$ is defined by \eqref{defbarpt}. 
\smallskip

To prove \eqref{eqEulerq}, we first note that the (standard) BSDE \eqref{eqEulerq} has a unique solution and $D_x\tilde L_t(\alpha, x)=- D_x\tilde H_t(p,x)$ if $\alpha=-D_p\tilde H_t(p, x)$. Thus, in view of \eqref{eqEuler}, we have $D_x\tilde L_t(\dot{\bar \gamma}_t, \bar \gamma_t)= - D_p\tilde H_t(\bar p_t, \bar \gamma_t)$ and \eqref{defbarpt} can be written, for $t\in [0,T]$, as 
$$
\bar p_t =\E\left[  - \int_t^T D_x\tilde H(\bar p_s, \bar \gamma_s)ds +D\tilde G(\bar \gamma_T)\ \Big| \ {\mathcal F}_t\right]. 
$$
It follows  that $t\to \bar p_t-\bar p_0-\int_0^t D_x\tilde H(\bar p_s, \bar \gamma_s)ds$ is a martingale, which proves \eqref{eqEulerq}. 

\end{proof}

The next result is about the regularity of $\tilde u$ along the optimal path.

\begin{lem}\label{lem.mt=Du} Let $\bar \gamma$ and  $\bar p$ be as in Theorem~\ref{thm.MaxPple}. Then, $\P-$a.s. and  for any $t\in (0,T]$, $x\to \tilde u_t(x)$ is differentiable at $\bar \gamma_t$ and $\bar p_t =D\tilde u_t(\bar \gamma_t)$. 
\end{lem}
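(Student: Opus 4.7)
The plan is to prove differentiability of $\tilde u_t$ at $\bar\gamma_t$ with $D\tilde u_t(\bar\gamma_t)=\bar p_t$ by establishing two matching first-order estimates, combined with the $\omega$-uniform semiconcavity of $\tilde u_t$ from Definition~\ref{def.solHJ}(ii). Both estimates rely on the dynamic programming principle implicit in Proposition~\ref{prop.repsol}: the optimality of $\bar\gamma$ gives, a.s., $\tilde u_s(\bar\gamma_s)=\E[\int_s^T\tilde L_r(\dot{\bar\gamma}_r,\bar\gamma_r)dr+\tilde G(\bar\gamma_T)\mid\mathcal F_s]$ for every $s\in[0,T]$, together with the DPP inequality $\tilde u_s(\bar\gamma_s)\le\E[\int_s^{s'}\tilde L_r(\dot\gamma_r,\gamma_r)dr+\tilde u_{s'}(\gamma_{s'})\mid\mathcal F_s]$ for any admissible competitor $\gamma$ starting at $\bar\gamma_s$ and any $s'\in(s,T]$.

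For the superdifferential inclusion $\bar p_t\in D^+\tilde u_t(\bar\gamma_t)$, I would use the competitor $\gamma^v_s:=\bar\gamma_s+v$ on $[t,T]$ starting at $\bar\gamma_t+v$, which has $\dot\gamma^v=\dot{\bar\gamma}$. A second order Taylor expansion in $v$ (legitimized by the uniform $C^2_x$ bounds on $\tilde L$ along $(\dot{\bar\gamma}_s,\bar\gamma_s)$ from \eqref{H2}, valid since $|\dot{\bar\gamma}|$ is uniformly bounded by Theorem~\ref{thm.MaxPple}, and on $\tilde G$ from \eqref{H1}) then gives
\[
\tilde u_t(\bar\gamma_t+v)-\tilde u_t(\bar\gamma_t)\le\bar p_t\cdot v+C|v|^2\qquad\text{a.s.,}
\]
with $\bar p_t$ exactly as in \eqref{defbarpt}.

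For the matching lower bound I would fix deterministic $v\in\R^d$ and $h\in(0,t)$, and take the competitor $\gamma^{v,h}_s:=\bar\gamma_s+v(s-(t-h))/h$ on $[t-h,t]$, starting at $\bar\gamma_{t-h}$ and reaching $\bar\gamma_t+v$ at time $t$. Combining the DPP inequality between $t-h$ and $t$ with the optimality identity for $\bar\gamma|_{[t-h,t]}$, Taylor-expanding the integrand to second order in $v$, and using $\bar p_s=-D_\alpha\tilde L_s(\dot{\bar\gamma}_s,\bar\gamma_s)$ (Legendre duality combined with \eqref{eqEuler}) together with the continuity of $s\mapsto\bar p_s$ from \eqref{eqEulerq} (so that $\E[\bar p_t\mid\mathcal F_{t-h}]=\bar p_{t-h}+O(h)$ and $\tfrac1h\int_{t-h}^t\E[\bar p_s\mid\mathcal F_{t-h}]ds=\bar p_{t-h}+O(h)$), one obtains
\[
\E[X_v\mid\mathcal F_{t-h}]\ge-C\bigl(h|v|+|v|^2/h\bigr)\qquad\text{a.s.,}
\]
where $X_v:=\tilde u_t(\bar\gamma_t+v)-\tilde u_t(\bar\gamma_t)-\bar p_t\cdot v$ and the errors $|v|^2/h$ and $h|v|$ come respectively from the second $\alpha$-derivative of $\tilde L$ (of size $(v/h)^2$ integrated over length $h$) and from the time continuity of $\bar p_s$. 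The balance $h=|v|^{1/2}$ gives $\E[X_v\mid\mathcal F_{t-|v|^{1/2}}]\ge-C|v|^{3/2}$.

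Combined with the pointwise upper bound $X_v\le C|v|^2$, this reads $\E[X_v^-\mid\mathcal F_{t-|v|^{1/2}}]\le C|v|^{3/2}$ for $X_v^-:=(-X_v)_+$. Markov's inequality and Borel--Cantelli along a sparse deterministic sequence $v_n=e_k/n^5$, for each direction $e_k$ in a countable dense subset of $S^{d-1}$, force $X_{v_n}^-=o(|v_n|)$ a.s.; the $\omega$-uniform Lipschitz regularity of $\tilde u_t$ (Definition~\ref{def.solHJ}(ii)) then interpolates this to $X_v=o(|v|)$ uniformly as $v\to 0$ on a single full-measure event, which is exactly Fr\'echet differentiability of $\tilde u_t$ at $\bar\gamma_t$ with $D\tilde u_t(\bar\gamma_t)=\bar p_t$. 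The main obstacle is the passage from the $\mathcal F_{t-h}$-conditional lower bound to a pointwise a.s. one, which the Markov--Borel--Cantelli argument combined with the uniform regularity of $\tilde u$ is designed to handle.
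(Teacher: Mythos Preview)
Your approach is correct but takes a genuinely different route from the paper. You establish the two one-sided estimates $X_v\le C|v|^2$ (forward competitor on $[t,T]$) and $\E[X_v\mid\mathcal F_{t-h}]\ge -C(h|v|+|v|^2/h)$ (backward competitor on $[t-h,t]$), and then upgrade the conditional lower bound to an almost-sure one via Markov/Borel--Cantelli along a sparse sequence plus Lipschitz interpolation. This works, but the passage from conditional to pointwise is delicate and requires the countable-directions/sequence bookkeeping you outline.

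The paper's argument is considerably shorter because it never needs the lower bound on $X_v$. Instead it takes an \emph{arbitrary} measurable selection $q\in D^+\tilde u_t(\bar\gamma_t)$ (nonempty by semiconcavity), inserts the semiconcavity inequality $\tilde u_t(\gamma^h_t)\le \tilde u_t(\bar\gamma_t)+q\cdot(\gamma^h_t-\bar\gamma_t)+C|\gamma^h_t-\bar\gamma_t|^2$ directly into the DPP comparison on $[t-h,t]$ with a constant-velocity competitor, divides by $h$ and lets $h\to 0$. This yields $0\le \tilde L_t(v,\bar\gamma_t)-\tilde L_t(\dot{\bar\gamma}_t,\bar\gamma_t)+q\cdot(v-\dot{\bar\gamma}_t)$ for every $\mathcal F_t$-measurable bounded $v$, hence $\dot{\bar\gamma}_t=-D_p\tilde H_t(q,\bar\gamma_t)$; comparing with \eqref{eqEuler} forces $q=\bar p_t$, so $D^+\tilde u_t(\bar\gamma_t)$ is a singleton. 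The forward variation (your Step~1) and the Borel--Cantelli machinery are bypassed entirely: semiconcavity is used not as a one-sided bound to be matched, but to linearize $\tilde u_t$ at $\bar\gamma_t$ with an \emph{unknown} slope $q$ that is then identified. The paper's route exploits the convex-duality structure more (it needs injectivity of $p\mapsto D_p\tilde H_t(p,x)$ to conclude $q=\bar p_t$); your route is more hands-on and closer to a direct verification of Fr\'echet differentiability.
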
 

\begin{proof} It follows from the dynamic programming principle that, for any $h>0$ small, all   ${\mathcal F}_{t}$ measurable and bounded $v$, and $\gamma^h$ such that $\gamma^h_{t-h}= \bar \gamma_{t-h}$ and $\dot \gamma^h_s= v^h= \E[v\ |\ {\mathcal F}_{t-h}]$ on $[t-h,t]$,  
\[
\tilde u_{t-h}(\bar \gamma_{t-h}) = \E\bigl[ \int_{t-h}^t \tilde L_s(\dot{\bar \gamma}_s, \bar \gamma_s)ds+ \tilde u_t(\bar \gamma_t)\ \Bigl| \; {\mathcal F}_{t-h}\Bigr]  \leq \E\bigl[ \int_{t-h}^t \tilde L_s(v^h, \gamma^h_s)ds+ \tilde u_t(\gamma^h_t)\ \Bigl| \; {\mathcal F}_{t-h}\Bigr].\] 
Let $q=q(\omega)$ be a measurable selection of $D^+u_t(\bar \gamma_t,\omega)$. Then, using the semiconcavity of $\tilde u_t$, we find 
\begin{align*}
0 & \leq \E\bigl[ \int_{t-h}^t (\tilde L_s(v^h, \gamma^h_s)-\tilde L_s(\dot{\bar \gamma}_s, \bar \gamma_s)) ds+ q\cdot (
\gamma^h_t- \bar \gamma_t)+C|
\gamma^h_t- \bar \gamma_t|^2\ \Bigl| \; {\mathcal F}_{t-h}\Bigr].
\end{align*}
Dividing  by $h$, letting  $h\to 0^+$ and using that the filtration $({\mathcal F}_t)$ is continuous, we obtain  
\begin{align*}
0 & \leq \tilde L_t(v, \bar \gamma_t)-\tilde L_t(\dot{\bar \gamma}_t, \bar \gamma_t)+ q\cdot (v-\dot{ \bar \gamma}_t).  
\end{align*}
Since  $\dot{\bar \gamma}_t$ maximizes $v\to -q\cdot v- \tilde L_t(v, \bar \gamma_t)$, it follows from  \eqref{eqEuler},  that 
 $$
\dot{\bar \gamma}_t= - D_p\tilde H_t(q, \bar \gamma_t)= -D_p\tilde H_t(\bar p_t, \bar \gamma_t).
$$
Thus  $q= \bar p_t$ and $D^+\tilde u_t(\gamma_t)$ is  a singleton. In view of the semiconcavity of $\tilde u_t$, the last fact implies that $\tilde u_t$ is differentiable at $\bar \gamma_t$ and  $\bar p_t =D\tilde u_t(\bar \gamma_t)$.
\end{proof}
\subsection{The continuity equation}\label{subseq.coneq}

We now investigate the continuity equation associated with the vector field $ -D_p\tilde H(D\tilde u_t(x),x)$. As in the previous subsections, $(\tilde u,\tilde M)$  is the solution of \eqref{BSDHJE_Intro}. 

\begin{prop}\label{prop.uniConti} Assume \eqref{H1}, \eqref{H2} and \eqref{H3}.  Then, for each  $\bar m_0\in L^\infty(\R^d)\cap \mathcal P_2(\R^d)$, there exists a $({\mathcal F}_t)_{t\in [0,T]}-$adapted process $\tilde m\in \mathcal S^r({\mathcal P}_1(\R^d))\cap L^\infty(\R^d\times (0,T))$ which solves, $\P-$a.s. and  in the sense of distributions,  the continuity equation (with random coefficients) 
\be\label{eq.eqmgivenDxu0}\partial_t\tilde m_t =  {\rm div}(\tilde m_tD_p\tilde H_t(D\tilde u_t(x),x))dt\ \  {\rm in}\ \ \R^d \times (0,T) \  \ \ 
 \tilde m_0=\bar m_0 \ \  {\rm in} \ \ \R^d. 
\ee
\end{prop}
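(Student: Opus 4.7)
My plan is to construct $\tilde m$ as a weak-$\star$ limit of solutions to suitably regularized continuity equations. The crucial structural input is that $\tilde u_t$ is uniformly semiconcave in $x$ (Theorem~\ref{thm.mainexists}), while $\tilde H_t$ is convex in $p$ and satisfies \eqref{H2}; these together yield a one-sided bound on the divergence of the approximate drifts, from which all $L^\infty$ estimates flow.

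First I would mollify only in the space variable. Let $\xi^\ep$ be a smooth, compactly supported kernel on $\R^d$, set $\tilde u^\ep_t := \tilde u_t \ast \xi^\ep$, and introduce the adapted, space-smooth, uniformly bounded drift
$$
b^\ep_t(x) := -D_p\tilde H_t(D\tilde u^\ep_t(x), x).
$$
Since the distributional semiconcavity bound of $\tilde u_t$ is preserved under convolution, $D^2\tilde u^\ep_t \leq C I$ holds pointwise. A direct computation gives
$$
\div b^\ep_t = -\tr\bigl(D^2_{pp}\tilde H_t \cdot D^2\tilde u^\ep_t\bigr) - \tr\bigl(D^2_{xp}\tilde H_t\bigr),
$$
with the Hessians of $\tilde H_t$ evaluated at $(D\tilde u^\ep_t(x), x)$. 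Since $D^2_{pp}\tilde H_t$ is positive semidefinite and $D^2\tilde u^\ep_t \leq CI$, the first term is at least $-C\,\tr(D^2_{pp}\tilde H_t)$; combined with \eqref{H2} applied at $R$ equal to the uniform bound on $\|D\tilde u^\ep\|_\infty$, I obtain
$$
\div b^\ep_t(x) \geq -\kappa \qquad \text{uniformly in } (x, t, \omega, \ep).
$$

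For fixed $\omega$, let $X^\ep_t$ denote the classical forward flow of $b^\ep$ with $X^\ep_0 = \mathrm{id}$, and set $\tilde m^\ep_t := X^\ep_t \sharp \bar m_0$. By Liouville's formula, $\det DX^\ep_t(x) \geq e^{-\kappa T}$, so $\tilde m^\ep_t$ has a density uniformly bounded by $e^{\kappa T}\|\bar m_0\|_\infty$ in $(\ep, t, \omega)$, and it solves $\partial_t \tilde m^\ep_t + \div(\tilde m^\ep_t b^\ep_t) = 0$ classically with $\tilde m^\ep_0 = \bar m_0$. The uniform bound $|b^\ep|\leq C$ gives $\dk(\tilde m^\ep_t, \tilde m^\ep_s) \leq C|t-s|$ and $M_2(\tilde m^\ep_t) \leq M_2(\bar m_0) + CT$, uniformly in $\ep$ and $\omega$.

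To conclude, for $\P$-a.e.\ $\omega$ I would extract by Banach-Alaoglu a subsequence along which $\tilde m^\ep \rightharpoonup \tilde m$ weak-$\star$ in $L^\infty(\R^d\times(0,T))$; the equicontinuity in $\dk$ then upgrades this to convergence in $C([0,T];\mathcal P_1(\R^d))$. Since $\tilde u_t$ is semiconcave, $D\tilde u^\ep_t \to D\tilde u_t$ a.e.\ in $\R^d\times(0,T)$, and hence $b^\ep \to b := -D_p\tilde H(D\tilde u, x)$ a.e.\ and boundedly, so also in $L^p_{\mathrm{loc}}$ for every $p<\infty$. Paired with the weak-$\star$ convergence of $\tilde m^\ep$, this is enough to pass to the limit in the product $\tilde m^\ep b^\ep$ in the sense of distributions and recover \eqref{eq.eqmgivenDxu0}. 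Adaptedness of $\tilde m$ and the required $\mathcal S^r$-regularity descend from the corresponding properties of each $\tilde m^\ep$ via the ODE defining $X^\ep$. I expect the one-sided divergence bound on $b^\ep$ to be the only nontrivial point; once it is in hand the remaining argument is compactness plus dominated convergence, while uniqueness (genuinely delicate, via \cite{BoJaMa}) is separated into Proposition~\ref{prop.UnikCont}.
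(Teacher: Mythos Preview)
Your approach is sound and genuinely different from the paper's. The paper does \emph{not} mollify $\tilde u$ in space; instead it recycles the time-discretized approximants $(\tilde u^N,\tilde M^N)$ already built in the proof of Theorem~\ref{thm.mainexists}, constructs $\tilde m^N$ inductively on each subinterval $[t^N_n,t^N_{n+1}]$ by invoking the deterministic continuity-equation theory from \cite{CaHa} (where $\tilde u^N$ solves a standard HJ equation), and then passes to a weak-$\star$ limit in $L^\infty(\R^d\times[0,T]\times\Omega)$ along a subsequence in $N$. Both routes rest on the same structural pillar---the one-sided divergence bound on the drift coming from semiconcavity of $\tilde u$ and convexity of $\tilde H$ in $p$---and both finish by the same weak-$\star$/a.e.\ convergence argument for the product $\tilde m^\ep b^\ep$. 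Your version is more self-contained (no need to import the deterministic results of \cite{CaHa}) and cleanly isolates the mechanism; the paper's version has the advantage of reusing machinery already in place and of delivering adaptedness to the discrete filtration for free.

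One point does need tightening: extracting a subsequence ``for $\P$-a.e.\ $\omega$'' via Banach--Alaoglu is dangerous, because an $\omega$-dependent subsequence will in general destroy joint measurability in $\omega$, and hence adaptedness. The remedy is exactly what the paper does: note that $\|\tilde m^\ep\|_{L^\infty(\R^d\times(0,T)\times\Omega)}$ is bounded uniformly in $\ep$, extract a \emph{single} subsequence converging weak-$\star$ in $L^\infty(\R^d\times(0,T)\times\Omega)$, and observe that adaptedness is preserved under such limits since each $\tilde m^\ep_t$ is $\mathcal F_t$-measurable (the flow $X^\ep_t$ depends only on $(\tilde H_s,D\tilde u_s)_{s\le t}$). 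The $\P$-a.s.\ distributional solution property and the $\mathcal S^r(\mathcal P_1)$ time-continuity then follow by testing against $\phi(x,t)\mathbf 1_A(\omega)$ and using the uniform ${\bf d}_1$-Lipschitz bound, respectively. With this correction your argument goes through.
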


\begin{proof} 
We use the discretization  in the proof of Theorem \ref{thm.mainexists} and  consider   the solution $(\tilde u^N, \tilde M^N)$ of the discretized problem defined there. 
\smallskip

Let $\tilde m^N \in C([0,T]; {\mathcal P}_1(\R^d))$ be the $(\mathcal F_{t_n})_{n=0,\dots,N}-$adapted process that solves, in the sense of distributions,  
\be\label{eq.mN}
\partial_t \tilde m^N_t =  {\rm div}(\tilde m^N_tD_p\tilde H^N_t(D\tilde u^N_t,x))dt\ \ {\rm in} \ \ \ \R^d\times (0,T) \ \ \  
\tilde m_0=\bar m_0 \ \ \ {\rm in} \ \ \R^d,
\ee
which,  following  the discussion in the appendix of \cite{CaHa}, can be built by induction. Indeed,  we can construct $\tilde m^N$ on each time interval $[t^N_n, t^N_{n+1}]$, since on this interval $\tilde u^N$ satisfies a standard HJ equation. In addition, for some $C>0$ and  $\P-$a.s.,
$$
\begin{array}{lll} 
&(i) & \ds  \sup_{t\in [0,T]} \int_{\R^d} |x|^2 \tilde m^N_t(x)dx\leq C,\\[3mm]
&(ii) &\ds  {\bf d}_1(\tilde m^N_t, \tilde m^N_s)\leq C|s-t| \ \  \text{for all} \ \  s,t\in [0,T],\\[2.5mm]
& (iii)&\ds  \|\tilde m^N\|_\infty \leq C. 
\end{array}
$$
Let $\tilde m$ be an (up to a subsequence) limit of $(\tilde m^N)_{N\in \N}$ in $L^\infty(\R^d\times [0,T]\times \Omega)-$weak*. Then, since, as $N\to \infty$ and  $\P-$a.s. and $(x,t)$ a.e., $D\tilde u^N \to D\tilde u$,  we can pass to the limit in  \eqref{eq.mN}. The claim then follows. 
\end{proof}

We now turn to the question of uniqueness, for which, unfortunately, we  require a much stronger condition than the standing ones. Indeed, we need to assume that $\tilde H$ is of the form
\be\label{hypsup}
\tilde H_t(p,x)= \frac{\tilde a_t(x)}{2} |p|^2 + \tilde B_t(x)\cdot p +\tilde f_t(x), 
\ee
where, for some constant $C_0>0$,   
\be\label{takis100}
\tilde a, \tilde f \in \mathcal S^2(C^2(\R^d)) \ \text{and}\ \tilde B\in \mathcal S^2(C^2(\R^d; \R^d)) \ \ \text{ with} \ C_0^{-1}\leq \tilde a_t(x)\leq C_0.
\ee
We note that \eqref{hypsup} and \eqref{takis100} yield that  $\tilde H$ satisfies \eqref{H1},  \eqref{H2},  and \eqref{H3}. 
\smallskip

The following result is a variation of  the one in \cite{BeLaLi20}. 

\begin{prop}\label{prop.UnikCont} Assume \eqref{H1}, \eqref{hypsup} and \eqref{takis100}.
Then, for each  $\bar m_0\in L^\infty(\R^d)\cap \mathcal P_1(\R^d)$, there exists a unique  $({\mathcal F}_t)_{t\in [0,T]}-$adapted process $\tilde m\in \mathcal S^2({\mathcal P}_1(\R^d))$ with bounded density in $\R^d\times (0,T))$ which solves \eqref{eq.mN}, $\P-$a.s. and  in the sense of distributions.
\end{prop}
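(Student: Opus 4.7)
Existence is inherited from Proposition~\ref{prop.uniConti}, since the assumptions \eqref{hypsup}--\eqref{takis100} imply \eqref{H1}, \eqref{H2} and \eqref{H3}. The novelty is uniqueness. Because \eqref{eq.eqmgivenDxu0} carries no It\^o integral with respect to $W$, the parameter $\omega$ enters only measurably, and it is enough to establish pathwise uniqueness for $\P-$a.e. $\omega$; adaptedness of the unique solution will then follow from its measurable dependence on $\omega$. Fix such an $\omega$. Using \eqref{hypsup}, the equation is cast in standard conservative continuity form $\partial_t \tilde m_t + \mathrm{div}(\tilde m_t v_t)=0$ with $\tilde m_0=\bar m_0$ and drift
\[
v_t(x) := -D_p\tilde H_t(D\tilde u_t(x),x) = -\tilde a_t(x) D\tilde u_t(x) - \tilde B_t(x),
\]
which is bounded and measurable in $(t,x)$ by Definition~\ref{def.solHJ}(ii) and \eqref{takis100}.

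The heart of the argument is the one-sided Lipschitz (OSL) estimate
\[
\langle v_t(x) - v_t(y),\, x-y\rangle \;\geq\; -C\,|x-y|^2 \qquad \text{for all } x,y\in \R^d \text{ and a.e. } t\in [0,T].
\]
Decomposing $v_t(x)-v_t(y) = -\tilde a_t(x)(D\tilde u_t(x)-D\tilde u_t(y)) - (\tilde a_t(x)-\tilde a_t(y))D\tilde u_t(y) - (\tilde B_t(x)-\tilde B_t(y))$, the last two contributions are controlled by $C|x-y|^2$ via the $C^2$ regularity and uniform bounds on $\tilde a_t,\tilde B_t$ and on $D\tilde u_t$. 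For the first piece, the semiconcavity of $\tilde u_t$ provided by Definition~\ref{def.solHJ}(ii) gives $\langle D\tilde u_t(x)-D\tilde u_t(y), x-y\rangle \leq \bar C|x-y|^2$, and the uniform lower bound $\tilde a_t(x)\geq C_0^{-1}>0$ from \eqref{takis100} converts this into the stated lower OSL for $v_t$. With $v$ bounded, measurable in $(t,x)$ and satisfying the OSL uniformly in $t$, the Bouchut-James-Mancini style uniqueness result for continuity equations, revisited in the appendix, yields uniqueness within the class of bounded-density solutions, which, together with Proposition~\ref{prop.uniConti}, proves the claim.

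The main obstacle is that the sign in the OSL bound has to match the one the BJM framework accepts, and this is guaranteed only because $\tilde a_t$ is strictly positive \emph{and} scalar: without both features, the semiconcavity of $\tilde u_t$ could not be leveraged into an OSL bound on $v_t$ with the correct sign, since the general OSL contribution would take the form $-\int_0^1 \langle D^2_{pp}\tilde H_t(\cdot)(D\tilde u_t(x)-D\tilde u_t(y)), x-y\rangle d\lambda$, which interacts nontrivially with the scalar semiconcavity estimate when $D^2_{pp}\tilde H_t$ is matrix-valued. This is precisely why the restrictive structural assumption \eqref{hypsup} on $\tilde H$ is imposed rather than the generic \eqref{HH}, and it is also the reason for the analogous restriction \eqref{takis1} in the MFG theorem. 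A secondary technical point is to verify that the BJM theorem remains valid for time-dependent drifts with OSL bound uniform in $t$ and $L^\infty$ in $(t,x)$, which is the content of the appendix.
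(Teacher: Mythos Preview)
Your proof is correct and follows essentially the same approach as the paper: reduce to a pathwise (fixed $\omega$) continuity equation, observe that under \eqref{hypsup} the drift $v_t(x)=-\tilde a_t(x)D\tilde u_t(x)-\tilde B_t(x)$ is bounded and one-sided Lipschitz from below, and then invoke the Bouchut--James--Mancini type uniqueness result recorded in the appendix (Proposition~\ref{prop.appen}). The paper's proof simply asserts the OSL bound without justification, whereas you supply the decomposition and explain how semiconcavity of $\tilde u_t$ combined with the scalar positivity of $\tilde a_t$ yields the correct sign; your additional commentary on why the structural hypothesis \eqref{hypsup} is genuinely needed is accurate and matches the discussion the paper gives elsewhere.
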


In the deterministic case considered in the appendix of \cite{CaHa}, the uniqueness of a solution does not require an addition structure assumption on $\tilde H$. Instead, it  relies on the fact that the forward-forward Hamiltonian system \eqref{eqEuler}-\eqref{eqEulerq} has a unique solution given the initial condition $(\bar \gamma_0,\bar p_0)$. Unfortunately  this does not seem to be  the case in the random setting.

\begin{proof}[Proof of Proposition~\ref{prop.UnikCont}] Set $\tilde b_t(x)= -D_p\tilde H_t(D\tilde u_t(x),x)$.  Then, in view of \eqref{hypsup} and the definition of a solution $\tilde u$ of  \eqref{BSDHJE_Intro}, $\tilde b_t(x)=-\tilde a_t(x) D\tilde u_t(x)-\tilde B_t(x)$ is bounded and one-side Lipschitz, that is, there exists $C_0>0$ such that, 
for all  $(x,y,t)\in  \R^d\times \R^d \times [0,T],$
$$ 
\|\tilde b\|_\infty\leq C_0  \ \ \text{and} \ \ (\tilde b_t(x)-\tilde b_t(y))\cdot(x-y) \geq -C_0|x-y|^2. 
$$
Then we can apply $\omega$ by $\omega$ the uniqueness result to the continuity equation given in Proposition \ref{prop.appen} in the appendix. 
\end{proof}

\subsection{Existence of optimal paths of  the stochastic control problem}\label{subsec.exOS}

We now address the problem of the existence of optimal paths  for  the  control representation of $\tilde u$ established in Proposition~\ref{prop.repsol} . For simplicity we assume again that $t=0$ and recall that 

\be\label{eq:u0=inf}
\tilde u_0(x) ={\rm essinf}_{\gamma \in \mathcal A_{0,x}}  
 \E\left[ \int_t^T \tilde L_s(\dot \gamma_s,\gamma_s)ds +\tilde G(\gamma_T)\right].
\ee
The problem is that   \eqref{eq:u0=inf} is a non convex stochastic optimal control problem with a (a priori) non smooth value function, hence the existence of an optimal path is far from obvious. 

\begin{prop}\label{prop.ExistOptTraj} Assume \eqref{H1},  \eqref{hypsup} and \eqref{takis100}. Then, $\P-$a.s. and  for a.e. $x\in \R^d$ there exists a unique minimizer $\bar \gamma^x \in \mathcal A_{0,x}$ of the stochastic optimal control problem \eqref{eq:u0=inf} and this minimizer satisfies  $\dot{\bar \gamma}^x_t= -D_p\tilde H_t(D\tilde u_t(\bar \gamma_t), \bar \gamma_t)$. 
\end{prop}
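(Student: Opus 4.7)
My plan is to (a) construct the candidate minimizer $\bar\gamma^x$ directly as the unique integral curve of the random drift $\tilde b_t(x):=-D_p \tilde H_t(D\tilde u_t(x),x)$, (b) verify its optimality via a chain rule for $\tilde u$ along $\bar\gamma^x$ combined with convex duality, and (c) deduce uniqueness from the maximum principle. Under \eqref{hypsup}--\eqref{takis100}, the computation in the proof of Proposition~\ref{prop.UnikCont} shows that, $\P$-a.s., $\tilde b_t(x) = -\tilde a_t(x) D\tilde u_t(x) - \tilde B_t(x)$ is bounded and one-side Lipschitz in $x$, uniformly in $t$. The uniqueness for continuity equations with bounded density in this regime (Proposition~\ref{prop.appen} in the appendix, after \cite{BoJaMa}) then produces, $\P$-a.s., a regular Lagrangian flow $\Phi_\cdot(x)$ for $\tilde b$: for Lebesgue a.e.\ $x$, the ODE $\dot\gamma_t = \tilde b_t(\gamma_t)$, $\gamma_0 = x$, admits a unique absolutely continuous solution $\bar\gamma^x := \Phi_\cdot(x)$, which inherits adaptedness from $\tilde b$ and hence lies in $\mathcal A_{0,x}$.

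\textbf{Verification of optimality.} Proposition~\ref{prop.repsol} directly yields the inequality $\tilde u_0(x) \le \E\bigl[\int_0^T \tilde L_t(\dot{\bar\gamma}^x_t, \bar\gamma^x_t)\,dt + \tilde G(\bar\gamma^x_T)\bigr]$. For the reverse inequality, I apply a chain rule to $t \mapsto \tilde u_t(\bar\gamma^x_t)$. Formally, using \eqref{BSDHJE_Intro} in the form $d_t\tilde u_t = \tilde H_t(D\tilde u_t, x)\,dt + d\tilde M_t$, the ODE $\dot{\bar\gamma}^x_t = -D_p\tilde H_t(D\tilde u_t(\bar\gamma^x_t), \bar\gamma^x_t)$, and the duality identity $\tilde H_t(p,x) - p \cdot D_p \tilde H_t(p,x) = -\tilde L_t(-D_p\tilde H_t(p,x), x)$, one obtains
\[
d\bigl[\tilde u_t(\bar\gamma^x_t)\bigr] = -\tilde L_t(\dot{\bar\gamma}^x_t, \bar\gamma^x_t)\, dt + d\tilde M_t(\bar\gamma^x_t),
\]
whence, after integration on $[0,T]$ and taking expectation, $\tilde u_0(x) = \E\bigl[\int_0^T \tilde L_t(\dot{\bar\gamma}^x_t, \bar\gamma^x_t)\,dt + \tilde G(\bar\gamma^x_T)\bigr]$. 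A rigorous justification is implemented at the discrete level $(\tilde u^N, \tilde M^N)$ of Theorem~\ref{thm.mainexists}, where $\tilde u^N$ solves a classical HJ equation on each subinterval, the approximate optimal trajectory $\bar\gamma^{N,x}$ is constructed by concatenation as in the proof of Proposition~\ref{prop.repsol}, and one passes to the limit using Lemma~\ref{lem.reguuNMN}, the a.e.\ convergence $D\tilde u^N \to D\tilde u$, and the martingale representation of $\tilde M$ recorded at the end of the proof of Theorem~\ref{thm.mainexists}.

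\textbf{Uniqueness.} Any further minimizer $\tilde\gamma^* \in \mathcal A_{0,x}$ satisfies, by Theorem~\ref{thm.MaxPple} and Lemma~\ref{lem.mt=Du}, the very same equation $\dot{\tilde\gamma}^*_t = -D_p\tilde H_t(D\tilde u_t(\tilde\gamma^*_t),\tilde\gamma^*_t) = \tilde b_t(\tilde\gamma^*_t)$, with $\tilde\gamma^*_0 = x$. For the Lebesgue a.e.\ $x$ at which the regular Lagrangian flow of Step~(a) is defined, this forces $\tilde\gamma^* = \bar\gamma^x$, giving the stated uniqueness of the minimizer and the Euler--Lagrange formula.

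\textbf{Main obstacle.} The chain rule in Step~(b) is the delicate point: $\tilde u$ is only semiconcave (Lipschitz, not $C^1$) in space, $\tilde M$ is an abstract martingale rather than an It\^o stochastic integral, and a priori the characteristic $\bar\gamma^x$ may pass through the null set where $D\tilde u_t$ fails to exist. The $\tilde u^N$-approximation sidesteps the non-smoothness, but the convergence $\bar\gamma^{N,x} \to \bar\gamma^x$ of the discrete-time optimal trajectories requires a stability estimate for one-side Lipschitz ODEs rather than the standard Cauchy--Lipschitz one, and is the reason why the structural hypothesis \eqref{hypsup} on $\tilde H$, not merely the general \eqref{H1}--\eqref{H3}, is needed throughout this subsection.
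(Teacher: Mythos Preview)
Your construction of $\bar\gamma^x$ via the regular Lagrangian flow of the one-side Lipschitz field $\tilde b_t(x)=-\tilde a_t(x)D\tilde u_t(x)-\tilde B_t(x)$, and your uniqueness argument via Theorem~\ref{thm.MaxPple} and Lemma~\ref{lem.mt=Du}, are both correct and match the paper in spirit. The substantive difference with the paper is in Step~(b), the verification of optimality.

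The paper does \emph{not} attempt a pointwise chain rule along a single trajectory. Instead it fixes a strictly positive $\bar m_0\in\mathcal P_2\cap L^\infty$, lets $\tilde m$ be the unique bounded solution of the continuity equation driven by $\tilde b$ (Proposition~\ref{prop.UnikCont}), obtains $\bar\gamma^x$ from the Ambrosio representation $\tilde m_t=\int\delta_{\bar\gamma^x_t}\bar m_0(x)\,dx$, and proves the \emph{integrated} identity
\[
\int_{\R^d}\E\Bigl[\int_0^T \tilde L_t(\dot{\bar\gamma}^x_t,\bar\gamma^x_t)\,dt+\tilde G(\bar\gamma^x_T)\Bigr]\bar m_0(x)\,dx \;=\;\int_{\R^d}\tilde u_0(x)\,\bar m_0(x)\,dx,
\]
by discretizing only in time (with the limit pair $(\tilde u,\tilde M)$, not the scheme $(\tilde u^N,\tilde M^N)$), integrating \eqref{def.eq} against the bounded density $\tilde m_{t_n}$, summing, and passing $N\to\infty$. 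This buys two things your pointwise approach lacks: (i) since \eqref{def.eq} holds only for a.e.~$x$, testing against $\tilde m_{t_n}\in L^\infty$ is legitimate, whereas evaluating at the random point $\bar\gamma^x_{t_n}$ is not; (ii) the martingale contribution becomes $\E\int_{\R^d}\tilde m_{t_n}(x)\bigl(\tilde M_{t_{n+1}}(x)-\tilde M_{t_n}(x)\bigr)\,dx=0$ by adaptedness of $\tilde m$. Pointwise optimality for a.e.~$x$ then follows from the integrated identity together with the inequality $\tilde u_0(x)\le\text{cost}(\bar\gamma^x)$ from Proposition~\ref{prop.repsol}, since $\bar m_0>0$.

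Your proposed rigorous route---work with $(\tilde u^N,\tilde M^N)$ and the discrete optimal trajectories $\bar\gamma^{N,x}$, then pass to the limit---has a gap you have not closed: the convergence $\bar\gamma^{N,x}\to\bar\gamma^x$ is not a consequence of a one-side Lipschitz stability estimate (that condition yields \emph{backward} uniqueness, not forward stability), and since $D\tilde u^N\to D\tilde u$ only a.e.\ in $(x,t)$, one cannot directly identify accumulation points of $(\bar\gamma^{N,x})_N$ as integral curves of the limit field $\tilde b$ starting at a fixed $x$. The paper's integrated approach sidesteps this difficulty entirely.
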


\begin{proof} Fix  $\bar m_0\in \mathcal P_2(\R^d)$ with  smooth and positive density. According to Proposition \ref{prop.UnikCont},  the random continuity equation 
$$
\partial_t \tilde m_t -{\rm div}(\tilde m_t D_p\tilde H_t(D\tilde u_t,x))= 0 \ \ \text{in} \ \  \R^d\times (0,\infty) \ \ \  \tilde m_0=\bar m_0 \ \ \text{in} \ \ \R^d
$$
has a unique solution $\tilde m$ with  bounded density. 
\smallskip

A simple adaptation of the  Lagrangian approach introduced in Ambrosio \cite{Am04} shows  that, in view of  the uniqueness of the solution to the continuity equation, there exists a  $dx\times d\P-$a.e. unique Borel measurable map $\R^d\times \Omega \ni (x,\omega)\to \bar \gamma^{x,\omega} \in \Gamma=C([0,T];\R^d)$,
which is adapted to the filtration $(\mathcal F_t)_{t\in [0,T]}$, such that, $\P$-a.s. and for any $t\in [0,T]$,  $\bar \gamma^{x,\omega}_0=x$ and 
\vskip-.1in
$$
\tilde m_t(\cdot, \omega)= \int_{\R^d} \delta_{\bar \gamma^{x,\omega}_t} \bar m_0(x)dx.
$$
\vskip-.1in

We show next  that the process $(\bar \gamma^x_t)_{t\in [0,T]}$ is optimal in the optimization problem \eqref{eq:u0=inf} for $\bar m_0-$a.e. $x\in \R^d$. For this, we claim that 
\be\label{alkjensdf}
\int_{\R^d} \E\left[ \int_0^T \tilde L_t(\dot{\bar \gamma}^x_t, \bar \gamma^x_t) dt+\tilde G(\bar \gamma^x_T) \right] m_0(x)dx= \int_{\R^d} \tilde u_0(x)m_0(x)dx. 
\ee
Assuming for the moment  \eqref{alkjensdf}, we proceed with the proof of the optimality, and  recall that, for any $x\in \R^d$, 
$$
\tilde u_0(x)\leq \E\left[ \int_0^T \tilde L_t(\dot{\bar \gamma}^x_t, \bar \gamma^x_t) dt+\tilde G(\bar \gamma^x_T) \right] .
$$
Integrating  the inequality above against $\bar m_0$, we infer by \eqref{alkjensdf} that $\bar \gamma^x$ is optimal for $\bar m_0-$a.e. $x\in \R^d$. Since  $\bar m_0>0$, this holds for a.e. $x\in \R^d$. 
\smallskip

It remains to prove \eqref{alkjensdf}. For this let, $t_n=nT/N$ for $n\in\{0,\dots, N\}$ with  $N\in \N$  and note that, in view of  the equation satisfied by $\tilde u$, we have, for a.e. $x$, 
\be\label{takis121}
 \tilde u_{t_{n+1}}(x)  -\tilde u_{t_{n}}(x) =  - \int_{t_n}^{t_{n+1}} \tilde H_t(D\tilde u_t(x),x) dt - (\tilde M_{t_{n+1}}(x)-\tilde M_{t_n}(x)).
\ee

Integrating \eqref{takis121}  against $\tilde m_{t_n}$, which is absolutely continuous with a bounded density, and summing over $n$ gives
\begin{align*}
& \sum_{n=0}^{N-1}  \int_{\R^d} (\tilde u_{t_{n+1}}(x)- \tilde u_{t_{n}}(x))\tilde m_{t_n}(x)dx \\
& \qquad = - \sum_{n=0}^{N-1}  \int_{t_n}^{t_{n+1}}\int_{\R^d} \tilde H_t(D\tilde u_t(x),x)\tilde m_{t_n}(x)dxdt 
-\sum_{n=0}^{N-1}  \int_{\R^d} (\tilde M_{t_{n+1}}(x)-\tilde M_{t_n}(x)) \tilde m_{t_n}(x)dx . 
\end{align*}
Reorganizing  the left-hand side of the expression above  taking into account the equation satisfied by $\tilde m$ yields 
\begin{align*}
& \sum_{n=0}^{N-1}  \int_{\R^d} (\tilde u_{t_{n+1}}(x)- \tilde u_{t_{n}}(x))\tilde m_{t_n}(x)dx \\
&  = \int_{\R^d} \tilde u_{T}(x)\tilde m_{t_{N-1}}(x)dx-  \int_{\R^d} \tilde u_0(x)\tilde m_0(x)dx-  \sum_{n=1}^{N-1}  \int_{\R^d} \tilde u_{t_{n}}(x)(\tilde m_{t_n}(x)-\tilde m_{t_{n-1}}(x))dx \\
&  = \int_{\R^d} \tilde G(x)\tilde m_{t_{N-1}}(x)dx -  \int_{\R^d} \tilde u_0(x)\bar m_0(x)dx
\\
& \qquad 
+ \sum_{n=1}^{N-1}\int_{t_{n-1}}^{t_n}  \int_{\R^d} D\tilde u_{t_{n}}(x)\cdot D_p \tilde H_{t}(D\tilde u_t(x),x)\tilde m_t(x)dxdt. 
\end{align*}
We let $N\to+\infty$ and take expectation to find 
\begin{align*}
& \E\left[ \int_{\R^d} \tilde G(x) \tilde m_T(x)dx -  \int_{\R^d} \tilde u_0(x)\bar m_0(x)dx
+\int_0^T \int_{\R^d} D\tilde u_{t}(x)\cdot D_p \tilde H_{t}(D\tilde u_t(x),x)\tilde m_t(x)\right]\\
& =  \E\left[-\int_{0}^{T}\int_{\R^d} \tilde H_t(D\tilde u(x)_t,x)\tilde m_{t}(x)dxdt  
 -\int_0^T  \int_{\R^d} \tilde m_{t}(x)d\tilde M_{t}(x)dx \right]. 
\end{align*}

Recalling that $\tilde M$ is a martingale and that $p\cdot D_p\tilde H_t(p,x)+\tilde H_t(p,x)= \tilde L_t(-D_p\tilde H_t(D\tilde u_t(x),x),x)$, we rearrange the last  expression to get 
\begin{align*}
& \E\left[ \int_{\R^d} \tilde G(x) \tilde m_T(x)dx +\int_0^T \int_{\R^d}  \tilde L_t(-D_p\tilde H_t(D\tilde u_t(x),x),x) \tilde m_t(x) \right]=   \int_{\R^d} \tilde u_0(x)\bar m_0(x)dx . 
\end{align*}
Finally,   the facts that $$\tilde m_t= \int_{\R^d} \delta_{\bar \gamma^x_t} \bar m_0(x)dx \ \ \text{  and} \ \  \dot{\bar \gamma}^x_t=-D_p\tilde H_t(D\tilde u_t(\bar \gamma^x_t),\bar \gamma^x_t),\bar \gamma^x_t)  \ \ \P\otimes \bar m_0- \text{a.e.} (\omega,x) $$
imply  that \eqref{alkjensdf} holds. 
\end{proof}
 
 
\section{The stochastic MFG system} 

We investigate the stochastic MFG system \eqref{e.MFGstoch}. We begin recalling that, after the change of the unknowns in \eqref{takis101} we obtain, at least formally,  \eqref{stoMFG_Intro} which we study here. 
\subsection{The assumptions and the notion of solution}\label{subsec.exTakis}
To study \eqref{stoMFG_Intro}  we assume that 
\be\label{H'1}
\tag{MFG1}
\bar m_0\in \mathcal P_2(\R^d)\cap L^\infty(\R^d),
\ee
\be\label{H'2}
\tag{MFG2}
\begin{cases}
\tilde F: \R^d\times  [0,T]\times \mathcal P_1(\R^d)\times \Omega\to \R \ \text{is such that, for any $m\in \mathcal P_1(\R^d)$,}\\[2mm]
 \ \tilde F_\cdot (\cdot, m) \in \mathcal S^r(C^2(\R^d))\  \text{for any $r\geq 1$, \  and}\\[2mm]
\text{$\tilde F_t$ satisfies \eqref{FG} uniformly in $t\in [0,T]$ and in $\omega\in \Omega$,}
\end{cases}
\ee
and 
\be\label{H'3}
\tag{MFG3}
\begin{cases}
\tilde G: \R^d\times  \mathcal P_1(\R^d)\times \Omega\to \R \ \text{ is $\mathcal F_T-$measurable, } \ G\in C(\R^d\times \mathcal P_1(\R^d);\R), \\[2mm]
 \text{ and \eqref{FG} is satisfied uniformly in $\omega\in \Omega$.}
\end{cases}
\ee

For $\tilde H:\R^d\times[0,T]\times \Omega \to \R$, we assume that
\be\label{H'4}
\tag{MFG4}
\tilde H_t \ \text{satisfies \eqref{HH} uniformly in $t\in [0,T]$ and in $\omega\in \Omega$.}
\ee
\smallskip
Moreover, if 
$$
\omega^N_R= \underset{|s-t|\leq 1/N, y\in \R^d, m\in \mathcal P_1(\R^d), |p|\leq R} \sup \ \Big[ |\tilde H_s(y,p)-\tilde H_t(y,p)|
+|\tilde F_s(y,m)-\tilde F_t(y,m)|\Big],
$$
then, for any $R>0$,  
\be\label{H'5}
\tag{MFG5}
\underset{N\to \infty} \E [\omega^N_R]=0. 
\ee
\vskip-.075in
Finally, we assume that 
\be\label{H'6}
\tag{MFG6}
\begin{cases}
\text{ $\tilde F_t$ and $\tilde G$ \ are strongly monotone uniformly in $t\in [0,T]$ and  in $\omega\in \Omega$, }\\[2mm]
\text{and $\tilde F_t$  \ is strictly monotone for all $t\in [0,T]$ and $\P-$a.s. in $\omega$.}
\end{cases}
\ee

A classical example of a map $\tilde F$ satisfying the above conditions, which  goes back to \cite{LL06cr1, LL06cr2, LLJapan, LiCoursCollege}, is  of the form 
$$
\tilde F_t(x,m) = \tilde f_t (\cdot, m\ast \rho (\cdot)) \ast \rho,
$$
where
\[\begin{cases}\text{$\rho$ is a smooth, non negative and even kernel,} \\[2mm]
\text{with Fourier transform $\hat \rho$ vanishing almost nowhere,}
\end{cases}\]

and
\smallskip

$\tilde f:  \R^d \times [0,T]\times  \R\times \Omega\to \R$ is $(\mathcal F_t)_{t \in [0,T]}$ adapted and, for any $R>0$,
\[
\begin{cases} \text{there exists $C_R>0$ such that}\\[2mm]
\underset{0\leq s\leq R,  \; t\in [0,T]}\sup\left[
\|\tilde f_t(\cdot, s)\|_\infty +\|D\tilde f_t(\cdot ,s)\|_\infty + \|D^2\tilde f_t(\cdot ,s)\|_\infty\right] \leq C_R, \end{cases}\]
and

\[\underset{N\to \infty} \E [ \underset{|t_1-t_2|\leq 1/N, 0\leq s \leq R} \sup \ |\tilde f_{t_1}(y,s)-\tilde f_{t_2}(y,s)|]=0,\]
and, finally,
\smallskip

$\tilde f$ is strictly increasing and Lipschitz in the second variable, that is,  there exists $\alpha\in (0,1)$ such that 
\[ \alpha \leq \frac{\partial \tilde f_t}{\partial s}(x,s)\leq \alpha^{-1}.\]

%

It is immediate that  $\tilde F$ satisfies the regularity conditions in \eqref{H'2} and \eqref{H'5} and, moreover, 
\begin{align*}
& \int_{\R^d} (\tilde F_t(x,m_1)-\tilde F_t(x,m_2)) (m_1-m_2)(dx) \\
& \qquad = \int_{\R^d} (\tilde f_t(x,m_1\ast \rho(x) )-\tilde f_t(x,m_2\ast\rho(x))) (m_1\ast\rho(x)-m_2\ast\rho(x))dx\\
&\qquad  \geq \alpha \int_{\R^d} (m_1\ast \rho(x) )-m_2\ast\rho(x)))^2 dx. 
\end{align*}
Then
\begin{align*}
& \int_{\R^d} (\tilde F_t(x,m_1) -\tilde F_t(x,m_2))^2 dx \leq \|\rho\|_\infty^2 \int_{\R^d} (\tilde f_t(x,m_1\ast \rho(x)) -\tilde F_t(x,m_2\ast \rho(x)))^2 dx\\
& \qquad \leq  \|\rho\|_\infty^2 \alpha^{-2} \int_{\R^d} (m_1\ast \rho(x) -m_2\ast \rho(x))^2 dx\\
&\qquad \leq \|\rho\|_\infty^2 \alpha^{-3}\int_{\R^d} (\tilde F_t(x,m_1)-\tilde F_t(x,m_2)) (m_1-m_2)(dx),
\end{align*} 
and $\tilde F$ is strongly monotone. 
\smallskip

The strict monotonicity follows from the observation that, if
$$
\int_{\R^d} (\tilde F_t(x,m_1)-\tilde F_t(x,m_2)) (m_1-m_2)(dx) =0, 
$$
then $(m_1-m_2)\ast \rho=0$, which implies that $\widehat{(m_1-m_2)}\hat \rho=0$. Since $\hat \rho$ vanishes almost nowhere, it follows that $\widehat{(m_1-m_2)}=0$ and, hence,  $m_1=m_2$.

\smallskip
\begin{rmk}\label{manos}{\rm It follows from  \eqref{H'2} and \eqref{H'6} that $\tilde F$ is H\"{o}lder continuous in $m$ with respect to the ${\bf d}_1-$ distance, that is, for all $m_1,m_2 \in  {\mathcal P}_1(\R^d)$
$$
\|\tilde F_t(\cdot,m_1)-\tilde F_t(\cdot,m_2)\|_\infty\leq C {\bf d}_1^{1/(d+2)}(m_1,m_2).
$$
Indeed,   in view of the interpolation inequality  $$\|f\|_\infty \leq C_d \|Df\|_\infty^{1/(d+2)}\|f\|^{2/(d+2)}_{L^2},
$$
with  $C_d$ depending  only on the  dimension, we find 
\begin{align*}
& \|\tilde F_t(\cdot,m_1)-\tilde F_t(\cdot,m_2)\|_\infty^{d+2} \\[2mm]
&\qquad  \leq C_d (\|D\tilde F_t(\cdot, m_1)\|_\infty+\|D\tilde F_t(\cdot, m_2)\|_\infty) \int_{\R^d} (\tilde F_t(x,m_1)-\tilde F_t(x,m_2))^2dx\\[2mm]
&\qquad  \leq 2C_0C_d \alpha^{-1} \int_{\R^d} (\tilde F_t(x,m_1)-\tilde F_t(x,m_2)) (m_1-m_2)(dx) \\[2mm]
&\qquad  \leq  2C_0C_d \alpha^{-1}(\|D\tilde F_t(\cdot, m_1)\|_\infty+\|D\tilde F_t(\cdot, m_2)\|_\infty){\bf d}_1(m_1,m_2) \\[2mm]
& \qquad   \leq 4C_0^2C_d\alpha^{-1} {\bf d}_1(m_1,m_2). 
\end{align*}
}\end{rmk}
\smallskip

We continue  with the definition of  a weak solution of  \eqref{stoMFG_Intro}.
\begin{defn}
The triplet  $(\tilde u, \tilde m, \tilde M)$ is a solution of \eqref{stoMFG_Intro} if: 

(i)~$\tilde u\in \mathcal S^r(W^{1,1}_{loc}(\Rd))$, $\tilde M\in \mathcal S^r(L^1_{loc}(\R^d))$ and 
$\tilde m\in \mathcal S^r(\mathcal P_1(\R^d))$ for any $r\geq 1$, 

(ii)~there exits  $C>0$ such that, $\P-$a.s.,  for a.e. $t\in [0,T]$, all $z\in \R^d$ such that $|z|\leq 1$, and in the sense of distributions, 
 $$\| \tilde m\|_\infty+\|\tilde u_t\|_{W^{1,\infty}(\R^d)}+ \|\tilde M_t\|_\infty + D^2\tilde u_t \ z\cdot z  \leq C,$$
(iii)~the process $(\tilde M_t(x))$ is a $(\mathcal F_t)_{t\in [0,T]}$ continuous  martingale for a.e. $x\in \R^d$.

(iv)~for a.e $(x,t)\in \R^d\times [0,T]$ and $\P-$a.s. in $\omega$,
 \vskip-.1in
 $$
\tilde  u_t(x) = \tilde G(x, \tilde m_T) -\int_t^T ( \tilde H_s(D\tilde u_s(x),x)- \tilde F_s(x,\tilde m_s)) ds -\tilde M_T(x)+ \tilde M_t(x),
$$
 \vskip-.1in
and

(v)~in the sense of distributions and $\P-$a.s. in $\omega$, 
$$d_t\tilde m_t =  {\rm div}(\tilde m_tD_p\tilde H_t(D\tilde u_t,x))\ \  {\rm in } \ \ \R^d\times (0,T) \ \ \ 
 \tilde m_0=\bar m_0\ \  {\rm in } \ \ \R^d. $$
\end{defn}
\smallskip

\subsection{The existence and uniqueness result}\label{subsec.main}\label{subsec.exMFG}
The  main result of the paper about the existence and uniqueness of a solution of \eqref{stoMFG_Intro} is stated next.

\begin{thm}\label{thm.main} Assume \eqref{H'1}, \eqref{H'2}, \eqref{H'3}, \eqref{H'4}, \eqref{H'5} and \eqref{H'6}.  Then there exists a unique solution of \eqref{stoMFG_Intro}.
\end{thm}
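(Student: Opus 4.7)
The plan is to split the argument into two parts: uniqueness, which follows directly from the Lasry--Lions monotonicity condition \eqref{H'6}, and existence, which I would obtain by a time-discretization procedure in the spirit of Theorem~\ref{thm.mainexists} combined with the sharp deterministic MFG estimates to be developed in subsection~\ref{subsec.MFGwn}.

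For uniqueness, let $(\tilde u^i, \tilde m^i, \tilde M^i)$, $i = 1, 2$, be two solutions and set $\bar u = \tilde u^1 - \tilde u^2$, $\bar m = \tilde m^1 - \tilde m^2$, $\bar M = \tilde M^1 - \tilde M^2$. I would study the evolution of the cross quantity
\[ \Phi_t := \int_{\R^d} \bar u_t(x) \, \bar m_t(dx). \]
The boundedness of the densities $\tilde m^i$, the Lipschitz and semiconcavity bounds on $\tilde u^i$, and the fact that $\tilde m^i$ has no martingale part together justify an integration by parts in space and make the It\^o cross bracket between $\bar u$ and $\bar m$ vanish. Using the equations in \eqref{stoMFG_Intro}, integrating over $[0,T]$ with $\bar m_0 = 0$, and taking expectations so that the $\bar M$-martingale drops out, I get
\begin{align*}
& \E\Bigl[\int_{\R^d} \bigl(\tilde G(x,\tilde m^1_T) - \tilde G(x, \tilde m^2_T)\bigr)\,\bar m_T(dx)\Bigr] \\
& \quad + \E\Bigl[\int_0^T \int_{\R^d} \bigl(\tilde F_t(x, \tilde m^1_t) - \tilde F_t(x, \tilde m^2_t)\bigr)\,\bar m_t(dx)\,dt\Bigr] \\
& \quad + \E\Bigl[\int_0^T \int_{\R^d} \bigl(\tilde m^1_t\, \mathcal E^{12}_t + \tilde m^2_t\, \mathcal E^{21}_t\bigr)\,dx\,dt\Bigr] = 0,
\end{align*}
where for $i\neq j$
\[ \mathcal E^{ij}_t(x) := \tilde H_t(D\tilde u^j_t,x) - \tilde H_t(D\tilde u^i_t,x) - D_p\tilde H_t(D\tilde u^i_t,x)\cdot(D\tilde u^j_t - D\tilde u^i_t) \geq 0 \]
by convexity of $\tilde H$ in $p$. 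The first two terms are non-negative by the strong monotonicity of $\tilde G$ and $\tilde F_t$, so each of the three terms must vanish. Strict monotonicity of $\tilde F_t$ then forces $\tilde m^1 = \tilde m^2$ $\P$-a.s. and $dt$-a.e., after which the two stochastic HJ equations in \eqref{stoMFG_Intro} have identical data, and the uniqueness corollary following Proposition~\ref{prop.comparison} yields $\tilde u^1 = \tilde u^2$ and hence $\tilde M^1 = \tilde M^2$.

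For existence, I would construct approximate solutions $(\tilde u^N, \tilde m^N, \tilde M^N)$ associated with the partition $t^N_n = nT/N$. On each slab $[t^N_n, t^N_{n+1}]$ the Hamiltonian and the couplings $\tilde F, \tilde G$ are frozen in time at $t^N_n$ and in randomness at $\mathcal F_{t^N_n}$, so that for each $\omega$ the slab problem reduces to a deterministic MFG system. Forward induction for $\tilde m^N$ starting from $\bar m_0$ and backward induction for $\tilde u^N$ starting from $\tilde G(\cdot, \tilde m^N_T)$, with the matching at slab boundaries enforced by conditional expectations exactly as in the construction of $\tilde u^N$ in the proof of Theorem~\ref{thm.mainexists}, produce the approximations. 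This furnishes both the $(\mathcal F^N_t)$-adaptedness of $\tilde u^N$ and the martingale increments of $\tilde M^N$. The sharp deterministic estimates from subsection~\ref{subsec.MFGwn} give uniform-in-$(N,\omega)$ bounds: $\tilde u^N$ is Lipschitz and semiconcave, $\tilde M^N$ is bounded, and $\tilde m^N$ has uniformly bounded density and is uniformly ${\bf d}_1$-Lipschitz in $t$.

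The main obstacle is the passage to the limit $N\to\infty$. From the uniform semiconcavity I would extract, up to a subsequence, limits $\tilde u, \tilde M, \tilde m$ with $D\tilde u^N \to D\tilde u$ a.e.~arguing as in Theorem~\ref{thm.mainexists}, while the uniform Wasserstein-Lipschitz bound together with Ascoli yields $\tilde m^N \to \tilde m$ in ${\bf d}_1$ uniformly in $t$, $\P$-a.s. Dominated convergence handles the Hamiltonian term; the H\"older continuity of $\tilde F_t$ in ${\bf d}_1$ from Remark~\ref{manos} (and the analogous control for $\tilde G$) handles the coupling terms; and the a.e.~convergence of $D\tilde u^N$, combined with the uniform $L^\infty$-bound on $\tilde m^N$, allows one to pass to the limit in the continuity equation. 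Proposition~\ref{prop.UnikCont} is then invoked to guarantee uniqueness of the limiting continuity equation, which is precisely why the structural hypothesis \eqref{hypsup} underlying that proposition is required in the MFG context as well; combined with the uniqueness argument above, it upgrades subsequential convergence to full-sequence convergence and closes the existence proof.
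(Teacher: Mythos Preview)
Your uniqueness argument is essentially the paper's, modulo the technical point that the low regularity of $(\tilde u,\tilde M)$ prevents a direct application of It\^o's formula to $\Phi_t$; the paper handles this by a time-discretization of the cross quantity before letting the mesh go to zero, but the structure is exactly what you describe.

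The existence argument, however, has a genuine gap in the passage to the limit. Your plan is to extract subsequential limits by compactness and then identify the limiting measure by invoking Proposition~\ref{prop.UnikCont}. But that proposition requires the structural hypothesis \eqref{hypsup} on $\tilde H$, which is \emph{not} among \eqref{H'1}--\eqref{H'6}; Theorem~\ref{thm.main} is stated and proved for general convex Hamiltonians satisfying only \eqref{HH}. So your route would prove a strictly weaker statement. (The structural condition \eqref{hypsup} enters the paper only later, in subsection~\ref{subsec.game}, for the $N$-player application.) There is also a measurability issue lurking in the compactness step: pathwise Ascoli does not produce a common subsequence across $\omega$, so it is unclear that your limits are adapted.

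The paper's substitute for the uniqueness of the continuity equation is a \emph{Cauchy} argument driven directly by monotonicity. One first applies the Lasry--Lions computation to the pair $(\tilde u^N,\tilde\mu^N)$ against $(\tilde u^K,\tilde\mu^K)$ to obtain, via the strong monotonicity \eqref{H'6},
\[
\E\Bigl[\|\tilde G(\cdot,\tilde\mu^N_T)-\tilde G(\cdot,\tilde\mu^K_T)\|_\infty^{d+2}+\int_0^T\|\tilde F_t(\cdot,\tilde\mu^N_t)-\tilde F_t(\cdot,\tilde\mu^K_t)\|_\infty^{d+2}\,dt\Bigr]\le C\,\E[\omega^N],
\]
so the coupling terms form a Cauchy sequence. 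Feeding this into the Douglis-type estimate (as in Theorem~\ref{thm.mainexists}) shows $(\tilde u^N)$ is Cauchy in $\sup_t\E[\|\cdot\|_{L^\infty(B_R)}^{d+1}]$, hence converges along the full sequence to an adapted $\tilde u$, and then $D\tilde u^N\to D\tilde u$ a.e. The limit measure is then identified not through uniqueness of the continuity equation but through the \emph{strict} monotonicity of $\tilde F_t$: any subsequential limit $\tilde\mu$ of $(\tilde\mu^N)$ must satisfy $\tilde F_t(\cdot,\tilde\mu_t)=\tilde f_t$ with $\tilde f$ already determined, so $\tilde\mu$ is unique and the whole sequence converges. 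This is why \eqref{H'6} contains both strong and strict monotonicity, and why \eqref{hypsup} is not needed here.

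A smaller point on the construction itself: your ``forward for $\tilde m^N$, backward for $\tilde u^N$ starting from $\tilde G(\cdot,\tilde m^N_T)$'' is circular as written, since $\tilde m^N_T$ depends on $\tilde u^N$. The paper resolves this by first building, by backward induction over slabs, a map $\tilde U^N_t(x,m)$ (the value as a function of the current measure, using Proposition~\ref{prop.MFGdet} to extend to all of $\mathcal P_1$), and only then running forward in $\tilde m^N$ while reading off $\tilde u^N_t=\tilde U^N_t(\cdot,\tilde m^N_{t^N_n})$.
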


The proof consists of several steps. Similarly to  the Hamilton-Jacobi case, the solution is constructed  by discretizing the noise in time. Hence, the  first step is  
to recall and refine known regularity results for deterministic MFG systems. Then we explain the construction of an approximate solution by time discretization and, finally, we pass to the limit to obtain the solution of \eqref{stoMFG_Intro}.
\vskip-.3in
\subsection{The deterministic MFG system}\label{subsec.MFGwn}

We consider the deterministic  MFG system 
\be\label{eq.MFGdeterm}
\left\{\begin{array}{l}
- \partial_t  u+ H (D u, x)=  F(x, \mu_t) \ \  {\rm in} \ \   \R^d\times (0,T), \\[2mm]
\partial_t  \mu_t -{\rm div} (  \mu_t D_p H (D u_t, x))=0  \ \  {\rm in} \ \   \R^d\times (0,T), \\[2mm]
 m_0= \bar m_0 \ \ \   u(\cdot,T)= G(\cdot, m_T). 
\end{array}\right. 
\ee
A solution of \eqref{eq.MFGdeterm} is a pair $(u,m)$ such that $u$ is a continuous, bounded and  semiconcave in $x$ uniformly in $t$  viscosity solution of  the HJ equation, while $m\in C([0,T], \mathcal P_2(\R^d))\cap L^\infty(\R^d\times (0,T))$ is a solution of the continuity equation in the sense of distribution. 
\smallskip

Next we state some general hypotheses, which imply the existence of a solution of \eqref{eq.MFGdeterm}.
\smallskip

We assume that 
\be\label{takis20}
H:\R^d\times\R^d\to \R \ \ \text{satisfies \eqref{HH}},
\ee
\be\label{takis21}
\begin{cases}
F \in C(\R^d\times \mathcal P_1(\R^d);\R) \   \text{ is Lipschitz continuous and }\\[2mm] 
\text{ semiconcave in $x$  uniformly in $m$, }
\end{cases}
\ee
\be\label{takis22}
\begin{cases}
\text{ 
there exits $\alpha_F>0$ such that, for all  $m_1,m_2\in \mathcal P_1(\R^d)$},\\[2mm] 
\int_{\R^d} ( F(x,m_1)- F(x,m_2)) (m_1-m_2)(dx) \geq \alpha_F \|F(\cdot,m_1)- F(\cdot,m_2)\|_\infty^{d+2},\\[2mm] 
\end{cases}
\ee
and
\be\label{takis23}
\begin{cases}
G:\R^d\times \mathcal P_1(\R^d)\to \R  \  \text{is H\"{o}lder continuous  in $m$ uniformly in $x$,}
\text{ bounded and}\\[2mm]
\text{ semiconcave in $x$ uniformly in $m$, and there exists $\alpha_G>0$ such that}\\[2mm] 
\text{ for all  $m_1,m_2\in \mathcal P_1(\R^d)$,}\\[2mm]
\int_{\R^d} ( G(x,m_1)- G(x,m_2)) (m_1-m_2)dx \geq \alpha_G \|G(\cdot,m_1)- G(\cdot,m_2)\|_\infty^{d+2}.
\end{cases}
\ee
\smallskip

We refer to Remark~\ref{manos} about about the connection between \eqref{takis22} and the  more standard strong monotonicity condition.

The following result can be derived from \cite{LLJapan}; see also \cite{CaHa}. In the sequel,  we give some details about the proof of the estimates that are needed for the proof of Theorem~\ref{thm.main}.  

\begin{lem}\label{lem.MFGdet} Assume \eqref{takis20}, \eqref{takis21},  \eqref{takis22} and \eqref{takis23}.
There exists $C_0>1$ such that, for any  $\bar m_0\in \mathcal P_2(\R^d)\cap L^\infty(\R^d)$, there exists a unique solution $u$ of \eqref{eq.MFGdeterm}  such that 
\be\label{takis24}
\|u\|_{\infty}\leq \|G\|_\infty+C_0T,
\ee
\be\label{takis25}
\begin{cases}
\text{ $u$ is semiconcave in $x$ uniformly in $m$, that is, if, for some $C_1>0$},\\[2mm]
\text{ all $z\in \R^d$ with  $|z|\leq 1$, all $m\in \mathcal P_2$, and in the sense of distributions,}\\[2mm]
\text{ if $D^2G(\cdot,m)z\cdot z -\lambda G(\cdot,m) \leq  C_1$, then for all $t\in [0,T]$, }\\[2mm]
\hskip1.5in D^2u_t(\cdot)z\cdot z -\lambda u_t(\cdot) \leq C_1+C_0T,
\end{cases}
\ee
and 
\be\label{takis26}
\begin{cases}
\text{ $m$ is bounded in $\R^d\times (0,T)$ and has finite second moments, that is, there}\\[2mm]
\text{exists $C>0$ depending   on  $\|Du\|_\infty$ and  the semiconcavity} \\[2mm]
\text{constant of $u$ such that, for all $t\in [0,T]$,}\\[2mm]
\hskip1.25in M_2(m_t)\leq M_2(\bar m_0)+ CT  \ \ \text{and} \ \   \|m_t\|_{\infty} \leq \|\bar m_0\|_\infty+ C T.
\end{cases}
\ee
\end{lem}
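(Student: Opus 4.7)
The plan is to separate the argument into existence/uniqueness and the quantitative estimates \eqref{takis24}--\eqref{takis26}. Since the existence and uniqueness theory is essentially contained in \cite{LLJapan, CaHa}, I would sketch it briefly via a Schauder fixed point on a convex compact set
\[
\mathcal{K}=\{m\in C([0,T];\mathcal{P}_1(\R^d)) : \|m_t\|_\infty\leq R,\ M_2(m_t)\leq R,\ \dk(m_s,m_t)\leq C|s-t|\}
\]
for $R$ sufficiently large: given $m\in\mathcal{K}$, the HJ equation with source $F(\cdot,m_t)$ admits a unique continuous, bounded and semiconcave viscosity solution $u$, and the continuity equation driven by $-D_pH(Du_t,\cdot)$ produces a new flow $\mu$ which, by the a priori estimates proved below, again lies in $\mathcal{K}$. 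Continuity of the map $m\mapsto \mu$ in $\dk$ follows from stability of viscosity solutions under uniform convergence of the source, together with continuous dependence for the continuity equation with semiconcave (hence one-sided Lipschitz) drift. Uniqueness is the classical Lasry-Lions monotonicity computation: subtract two copies of the system, multiply the HJ difference by $m_1-m_2$, integrate in space-time using both continuity equations, and invoke the convexity of $H$ in $p$ together with \eqref{takis22}--\eqref{takis23} to force $m_1=m_2$ and then $u_1=u_2$.

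For the estimates, \eqref{takis24} is a direct application of comparison in the HJ equation, with $\pm(\|G\|_\infty+C_0(T-t))$ as super- and subsolutions, using that $\|F(\cdot,m)\|_\infty\leq C_0$ uniformly in $m$ by \eqref{takis21}. The semiconcavity bound \eqref{takis25} I would obtain by recycling the linearized computation \eqref{kjhenrdkfg} from the proof of Lemma~\ref{lem.reguuNMN}: for $|z|\leq 1$ the function $w_t(x)=D^2u_t(x)z\cdot z-\lambda u_t(x)$ satisfies, formally and then rigorously after vanishing viscosity,
\[
-\partial_t w + D_pH(Du_t,x)\cdot Dw \leq C_0 + \bigl(D^2_{xx}F(x,m_t)z\cdot z-\lambda F(x,m_t)\bigr) \leq C_0+C_1,
\]
by \eqref{HH}(iii) and the semiconcavity hypothesis on $F$ in \eqref{takis21}; since $w_T\leq C_1$ pointwise by assumption, comparison yields $w_t\leq C_1+C_0T$, which is \eqref{takis25} with $C_0$ independent of $C_1$. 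The uniform $L^\infty$-bound on $Du$ that is tacitly used in what follows is then a consequence of the semiconcavity and $L^\infty$-bound on $u$ via standard interpolation.

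For \eqref{takis26}, the semiconcavity of $u$ combined with \eqref{HH}(ii) gives $\dv b_t\leq C$ in the sense of distributions for the drift $b_t(x)=-D_pH(Du_t(x),x)$, with $C$ depending only on $\|Du\|_\infty$ and the semiconcavity constant of $u$. A standard mollification of the continuity equation (equivalently, the Liouville identity $m_t(X_t(x))\det DX_t(x)=\bar m_0(x)$ with $\det DX_t\geq e^{-Ct}$) produces $\|m_t\|_\infty\leq \|\bar m_0\|_\infty e^{Ct}$, and the additive form in \eqref{takis26} follows by enlarging $C$. The second-moment bound is immediate from $\|b_t\|_\infty\leq C$, itself ensured by \eqref{HH}(ii) at $R=\|Du\|_\infty$. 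The main obstacle I anticipate is bookkeeping: the constant $C_0$ in \eqref{takis24}--\eqref{takis25} must depend only on the data of $H,F,G$ and not on $C_1$ or on the Lipschitz/semiconcavity constants produced along the way, so that when this lemma is iterated in the time-discretized construction of the stochastic solution, all constants remain uniform. This requires carefully separating the contribution of the terminal datum (which propagates $C_1$) from the accumulated running source (which produces the linear $C_0T$ growth), and similarly isolating the dependencies in \eqref{takis26}.
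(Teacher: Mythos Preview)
Your proposal is correct and follows essentially the same route as the paper: existence/uniqueness is deferred to \cite{LLJapan,CaHa}, \eqref{takis24} comes from comparison, \eqref{takis25} is exactly the linearized computation \eqref{kjhenrdkfg} from Lemma~\ref{lem.reguuNMN} that the paper also invokes, and \eqref{takis26} is obtained from the one-sided divergence bound on the drift (the paper phrases this via the maximum principle on the non-divergence form of the continuity equation, you via the Liouville/Jacobian identity, which is equivalent). Your closing remark about tracking the linear-in-$T$ growth of the constants so that the lemma can be iterated in the time-discretized construction is precisely the point the paper stresses after the statement.
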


We note the claim is 
 that $u$ remains bounded and uniformly semiconcave in $x$ uniformly in time and in the initial measure $\bar m_0$.
\smallskip

In addition, $u$ is also uniformly Lipschitz continuous in $x$. This is a consequence of the elementary fact that,  
if  $v:\R^d\to \R$ is bounded by some $M$ and semiconcave with constant $K$, $v$ is Lipschitz continuous with a Lipschitz constant bounded by $2(MK)^{1/2}$. 
\smallskip

Finally,  the fact that the estimates on $u$ and $m$ grow only linearly in time $T$ will be important in the construction of the next subsection and justifies the awkward formulation of the semiconcavity estimate. 

\begin{proof}[The proof of Lemma~\ref{lem.MFGdet}]  The existence and uniqueness of the solution $(u,m)$ and the estimates on $m$  can be found in  \cite{CaHa, LLJapan}. The bound and the semiconcavity estimates on $u$ can be established as in the proof of Lemma \ref{lem.reguuNMN}. 
Here  we only repeat some the formal argument for convenience, noting that  everything  can be justified using ``viscous'' regularizations. 
\smallskip

Formally, it is immediate that 
\begin{align*}
\frac{d}{dt} M_2(m_t) & = \frac{d}{dt} \int_{\R^d} |x|^2m_t(dx) =- \int_{\R^d} 2x\cdot D_pH(Du_t(x),x)m_t(dx)\\
&  \leq 4\|D_pH(Du_t)\|_\infty^2+ M_2(m_t), 
\end{align*}
and the estimate on $M_2(m_t)$ follows by Gronwall's Lemma. 
\smallskip

For the $L^\infty-$bound, we rewrite the continuity equation as
$$
\partial_t m -D_pH(Du_t, x)\cdot Dm_t- m_t{\rm div}(D_pH(Du_t,x))=0, 
$$
where
$$
{\rm div}(D_pH(Du_t(x),x))= {\rm Tr}(D^2_{pp}H(Du_t,x)D^2u_t(x)+ D^2_{px}H(Du_t(x),x)) \leq C, 
$$
in view of the Lipschitz and semiconcavity estimates of $u$. The bound follows  using the  maximum principle. 
\end{proof}

Later in the paper  it will be convenient to define the solution of \eqref{eq.MFGdeterm} in a unique way for less regular initial measures. For this we use the following regularity result.

\begin{prop}\label{prop.MFGdet} Assume \eqref{takis20}, \eqref{takis21},  \eqref{takis22}, and \eqref{takis23}. Then,  if $(u^1,m^1), (u^2,m^2)$  are the  solutions of the MFG system \eqref{eq.MFGdeterm}  with $m^1_0,m^2_0\in \mathcal P_2(\R^d)\cap L^\infty(\R^d)$, 
then, there exist positive constants $C, C'$, which depend    on $d$, $\alpha_G$, $\alpha_F$ and  $\|Du^i\|_\infty$,  such that 
$$
\|u^1-u^2\|_\infty^{d+2} \leq C \int_{\R^d} (u^1(x,0)-u^2(x,0))(m^1_0-m^2_0)(dx) \leq C'{\bf d}_1(m^1_0,m^2_0).
$$
\end{prop}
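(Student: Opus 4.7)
The plan is to follow the Lasry-Lions monotonicity argument. Set $w_t = u^1_t - u^2_t$ and $\mu_t = m^1_t - m^2_t$. Using the Hamilton-Jacobi equations satisfied by $u^1,u^2$ and the continuity equations for $m^1, m^2$ (after an integration by parts in space that can be justified via the regularity of $m^i$ from Lemma~\ref{lem.MFGdet} and the semiconcavity of $u^i$), I would compute
\begin{align*}
\frac{d}{dt}\int_{\R^d} w_t\,\mu_t\,dx  = & \int_{\R^d} m^1_t\bigl[H(Du^1_t,x)-H(Du^2_t,x)-D_pH(Du^1_t,x)\cdot(Du^1_t-Du^2_t)\bigr]dx \\
& + \int_{\R^d} m^2_t\bigl[H(Du^2_t,x)-H(Du^1_t,x)-D_pH(Du^2_t,x)\cdot(Du^2_t-Du^1_t)\bigr]dx \\
& - \int_{\R^d}\bigl(F(x,m^1_t)-F(x,m^2_t)\bigr)\mu_t\,dx.
\end{align*}
By the convexity of $H$ in $p$ (Bregman-type inequality), the first two integrands are non-positive pointwise. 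Integrating in $t$ between $0$ and $T$ and using the terminal condition $u^i_T = G(\cdot,m^i_T)$ yields
$$
\int_{\R^d}(u^1_0-u^2_0)(m^1_0-m^2_0)dx \;\geq\; \int_{\R^d}(G(\cdot,m^1_T)-G(\cdot,m^2_T))(m^1_T-m^2_T)dx + \int_0^T\!\!\int_{\R^d}(F(\cdot,m^1_t)-F(\cdot,m^2_t))\mu_t\,dx\,dt.
$$

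The next step is to convert this into a bound on $\|u^1 - u^2\|_\infty$. Applying the strong monotonicity \eqref{takis22}-\eqref{takis23} to the right-hand side gives
$$
\alpha_G \|G(\cdot,m^1_T)-G(\cdot,m^2_T)\|_\infty^{d+2} + \alpha_F\int_0^T \|F(\cdot,m^1_t)-F(\cdot,m^2_t)\|_\infty^{d+2}\,dt \leq \int_{\R^d}(u^1_0-u^2_0)(m^1_0-m^2_0)\,dx.
$$
On the other hand, $w$ satisfies the linear transport equation $\partial_t w = b_t(x)\cdot Dw - (F(x,m^1_t) - F(x,m^2_t))$, where $b_t(x) = \int_0^1 D_pH(sDu^1_t+(1-s)Du^2_t,x)ds$ is bounded in terms of $\|Du^i\|_\infty$, with terminal condition $w_T = G(\cdot, m^1_T) - G(\cdot, m^2_T)$. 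The method of characteristics (or a direct maximum principle argument along the flow of $-b$) yields
$$
\|u^1-u^2\|_\infty \leq \|G(\cdot,m^1_T)-G(\cdot,m^2_T)\|_\infty + \int_0^T\|F(\cdot,m^1_t)-F(\cdot,m^2_t)\|_\infty\,dt.
$$
Raising to the $(d+2)$-th power and applying $(a+b)^{d+2}\leq 2^{d+1}(a^{d+2}+b^{d+2})$ together with Jensen's inequality $\bigl(\int_0^T f\,dt\bigr)^{d+2}\leq T^{d+1}\int_0^T f^{d+2}\,dt$, then combining with the monotonicity estimate above, establishes the first inequality of the proposition.

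For the second inequality, since $u^1_0,u^2_0$ are Lipschitz in $x$ with a constant controlled by $\|Du^1\|_\infty+\|Du^2\|_\infty$, the Kantorovich-Rubinstein duality immediately gives
$$
\int_{\R^d}(u^1_0-u^2_0)(m^1_0-m^2_0)\,dx \leq \bigl(\|Du^1\|_\infty+\|Du^2\|_\infty\bigr)\,\mathbf{d}_1(m^1_0,m^2_0).
$$

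The main technical obstacle is justifying the chain of integrations by parts in the first identity, because $u^i$ are only Lipschitz and semiconcave in space, not $C^2$. I would handle this by the standard device of vanishing-viscosity regularizations of the MFG system (as is done in \cite{LLJapan, CaHa}), performing the computation for the regularized problem where everything is smooth, and passing to the limit using the uniform estimates of Lemma~\ref{lem.MFGdet}, the boundedness of $m^i$, and dominated convergence.
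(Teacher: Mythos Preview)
Your proposal is correct and follows essentially the same route as the paper: the Lasry--Lions duality identity combined with strong monotonicity of $F$ and $G$, then an $L^\infty$ bound on $u^1-u^2$ in terms of the $L^\infty$ norms of $G(\cdot,m^1_T)-G(\cdot,m^2_T)$ and $F(\cdot,m^1_t)-F(\cdot,m^2_t)$, raised to the $(d+2)$-th power, together with the Kantorovich--Rubinstein bound at $t=0$. The only cosmetic difference is that the paper obtains the $L^\infty$ estimate on $u^1-u^2$ by invoking the comparison principle for the two HJ equations directly (treating the $F$-terms as source terms), whereas you linearize into a transport equation for $w$; both are justified via the same vanishing-viscosity approximation you mention.
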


It follows 
that the map $U:\R^d\times [0,T] \times (\mathcal P_2(\R^d)\cap L^\infty(\R^d)) \to \R$
given by 
$$
U_t(x,\bar m_0)= u_t(x), 
$$
where $(u,m)$ is the solution of the MFG system \eqref{eq.MFGdeterm} with initial condition $m(0)=\bar m_0$,  has a unique extension on $\R^d\times[0,T]\times  \mathcal P_1(\R^d)$.
\smallskip

Moreover, in view of Lemma~\ref{lem.MFGdet} and Proposition \ref{prop.MFGdet}, the extended map $U:\R^d\times [0,T]\times \mathcal P_1(\R^d)\to \R$ is  H\"{o}lder continuous in $m$  uniformly in $x$,  Lipschitz continuous and semiconcave in $x$ uniformly in $m$, and   strongly monotone in the sense of \eqref{takis22}. 
\smallskip

We also note  that the map $\bar m_0\to m$, where $(u,m)$ is the solution of the MFG system \eqref{eq.MFGdeterm} with initial condition $m(0)=\bar m_0$, is continuous in $C([0,T];\mathcal P_1(\R^d))$.  Indeed, Proposition \ref{prop.MFGdet}   gives that the map $\bar m_0\to u$ is continuous from $\mathcal P_2(\R^d)$ to $C(\R^d\times [0,T])$. In view of the semiconcavity estimate on $u$, this in turn yields the continuity of the map $\bar m_0\to Du$ in $L^1_{loc}(\R^d\times [0,T])$. The claimed  continuity of $\bar m_0\to m$ follows combining  the uniform in  time continuity of $m$ in $\mathcal P_1(\R^d)$, which depends  on $\|Du\|_\infty$, the $L^\infty-$estimate on $m$ and the uniqueness of the solution of the associated continuity equation. 

\begin{proof}[Proof of Proposition~\ref{prop.MFGdet}]  Using a viscous approximation to justify it, the standard proof of uniqueness of the MFG system \eqref{eq.MFGdeterm} yields
$$
\int_0^T \int_{\R^d} (F(x,m^1_t)-F(x,m^2_t))(m^1_t-m^2_t)(dx) \leq -\left[ \int_{\R^d} (u^1-u^2)(m^1-m^2)(dx)\right]_0^T. 
$$
It then follows from the strong monotonicity condition on $F$ and and $G$ that 
\begin{align*}
& \alpha_G \|G(\cdot,m^1_T)-G(\cdot,m^2_T)\|_\infty^{d+2} +\alpha_F \int_0^T\|F(\cdot,m^1_t)-F(\cdot,m^2_t))\|^{d+2}_\infty dt \\
&  \leq \int_{\R^d} (G(x,m^1_T)-G(x,m^2_T))(m_T^1-m_T^2)(dx) +  \int_0^T \int_{\R^d} (F(x,m^1_t)-F(x,m^2_t))(m_t^1-m_t^2)(dx)\\
&  \leq \int_{\R^d} (u^1(x,0)-u^2(x,0))(m^1_0-m^2_0)(dx) \leq (\|Du^1\|_\infty+\|Du^2\|_\infty){\bf d}_1(m^1_0,m^2_0).
\end{align*}
Using  the uniform Lipschitz estimates on $u^i$ and the comparison principle in the equations for the  $u^i$ we find 
\begin{align*}
\|u^1-u^2\|_\infty& \leq  \|G(\cdot,m^1_T)-G(\cdot,m^2_T)\|_\infty+\int_0^T \|F(\cdot,m^1_t)-F(\cdot,m^2_t))\|_\infty dt.
\end{align*}
Hence, there exists $C>0$, which depends on $d$, $\alpha_G$, $\alpha_F$ and $\|Du^i\|_\infty$, such that 
\begin{align*}
\|u^1-u^2\|_\infty^{d+2}& \leq C\Big(  \|G(\cdot,m^1_T)-G(\cdot,m^2_T)\|_\infty^{d+2}+\int_0^T \|F(\cdot,m^1_t)-F(\cdot,m^2_t)\|_\infty^{d+2} dt\Big)\\
& \leq  C \int_{\R^d} (u^1(x,0)-u^2(x,0))(m^1_0-m^2_0)(dx) \leq C{\bf d}_1(m^1_0,m^2_0).
\end{align*}
\end{proof}

\subsection{The discretized stochastic MFG system} \label{subsec.appMFG}

We use the same discretization technique and the same notation as in the proof of Theorem \ref{thm.mainexists}. 
Let $N\in \N$ and  set $t^N_n=Tn/N$,
$$
\tilde H^N_t(p,x)= \tilde H_{t^N_n}(p,x) \ \ \text{and } \ \ \tilde F^N_t (x, m)= \tilde F_{t^N_n}(x,m) \ \  {\rm on} \ \ [t^N_n,t^N_{n+1}),
$$
and consider the filtration $(\mathcal F^N_t)_{t\in [0,T]}$ defined by 
$$
\mathcal F^N_t= \mathcal F_{t^N_n} \ \  \text{for} \ \  t\in [t^N_n , t^N_{n+1}).
$$
The goal here is to build a triplet $(\tilde u^N, \tilde M^N, \tilde \mu^N)$ such that 
\smallskip

(i)~$(\tilde u^N, \tilde M^N, \tilde \mu^N)$ is adapted to the filtration $({\mathcal F}^N_{t})_{t\in [0,T]}$,
\smallskip

(ii)~on each  interval $(t^N_n, t^N_{n+1})$ with  $n=0,\ldots, N-1$,  $\tilde u^N$ is a viscosity solutions of the backward HJ equation 
$$
\left\{\begin{array}{l}
-\partial_t \tilde u^N+\tilde H^N_t (D\tilde u^N, x)= \tilde F_t^N(x,\tilde \mu^N_t) \ \ {\rm on}\ \  \R^d\times (t^N_n, t^N_{n+1}), \\[2mm]
\tilde u^N(\cdot,t^{N,-}_{n+1})= \E\left[ \tilde u^N(\cdot,t^{N,+}_{n+1}) | {\mathcal F}_{t^N_n}\right] \ \ \text{in} \ \ \R^d, 
\end{array}\right. 
$$
(iii)~$\Delta \tilde M^N$ is defined by 
$$
\Delta\tilde M^N_t(x)  = \tilde u^N(x,t^{N,-}_{n+1})- \E\left[ \tilde u^N(x,t^{N,+}_{n+1}) | {\mathcal F}_{t_n}\right]\ \  {\rm on}\ \ (t^N_n, t^N_{n+1}),\\
$$

and 
$$
\tilde M^N_t(x) = \sum_{t^N_n<t} \Delta \tilde M^N_{t^N_n}(x). 
$$

(iv)~$\tilde \mu^N \in C([0,T]; {\mathcal P}_2(\R^d))$ is $\P-$a.s. a weak solution of 
%
$$
\left\{\begin{array}{l}
\partial_t \tilde \mu^N_t -{\rm div} (\tilde  \mu^N_t D_p\tilde H^N_t (D\tilde u^N_t, x))=0 \ \  {\rm in} \ \ \R^d\times (0,T),\\[2mm]
\tilde \mu^N_0= \bar m_0 \ \  {\rm in} \ \ \R^d.
\end{array}\right. 
$$

This is the topic of the next lemma.

%
%
%
%

\begin{lem}  Assume \eqref{takis20}, \eqref{takis21}, \eqref{takis22} and \eqref{takis23}. Then, there exists at least one solution $(\tilde u^N, \tilde M^N, \tilde \mu^N)$ of the problem above and $C>0$ such that,  $\P-$a.s., a.e. in $t\in [0,T]$, all $z\in \R^d$ with $|z|\leq 1$, and in the sense of distributions, 
\smallskip

(i)~$\|\tilde u^N\|_\infty + \|D\tilde u^N\|_\infty +\|\tilde M\|_\infty + \|\tilde \mu^N\|_\infty  + D^2\tilde u^N_t z\cdot z  \leq C,$ 
\smallskip

(ii)~for any $x\in \R^d$, $(\tilde u^N_t(x))_{t\in [0,T]}$ and $(\tilde \mu^N_t(x))_{t\in [0,T]}$ are adapted to the filtration $(\mathcal F^N_t)_{t\in [0,T]}$ and,  therefore,  the filtration $(\mathcal F_t)_{t\in [0,T]}$, and $(\tilde M^N_{t_n}(x))_{n=0,\ldots, N}$ is a martingale with respect to  the discrete filtration $(\mathcal F_{t_n})_{n=0,\ldots, N}$.
\end{lem}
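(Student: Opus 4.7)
The plan is to build $(\tilde u^N, \tilde \mu^N, \tilde M^N)$ by backward induction over the $N$ discretization intervals. The key device is to introduce at each $t^N_n$ a random \emph{value map} $V_n:\R^d\times \mathcal P_1(\R^d)\times \Omega\to \R$, which is $\mathcal F_{t^N_n}$-measurable and represents the would-be value $\tilde u^N(\cdot, t^{N,+}_n)$ as a functional of the density at $t^N_n$. Starting from $V_N(x,m)=\tilde G(x,m)$, for which \eqref{H'3} and \eqref{H'6} provide all required bounds, Lipschitz and semiconcavity in $x$, and strong monotonicity in $m$, I would carry through the induction the properties: $\mathcal F_{t^N_n}$-measurability of $V_n$, uniform boundedness and uniform semiconcavity in $x$, and strong monotonicity in $m$ in the sense of \eqref{takis22}.

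Inductive step: given $V_{n+1}$, I would set the conditional terminal datum
\[
G^n(x,m,\omega):=\E\bigl[V_{n+1}(x,m,\cdot)\mid \mathcal F_{t^N_n}\bigr](\omega).
\]
The $L^\infty$, Lipschitz and semiconcavity bounds in $x$ pass through the conditional expectation. The strong monotonicity survives too: conditioning on $\mathcal F_{t^N_n}$ and applying Jensen's inequality first to $\|\cdot\|_\infty$ and then to $r\mapsto r^{d+2}$ transfers the inequality satisfied by $V_{n+1}$ to $G^n$ with the same constant $\alpha$. Since $\tilde H_{t^N_n}$ and $\tilde F_{t^N_n}$ are $\mathcal F_{t^N_n}$-measurable and the hypotheses \eqref{takis20}--\eqref{takis23} of Lemma~\ref{lem.MFGdet} hold $\omega$ by $\omega$ (by \eqref{H'2}, \eqref{H'4} and what has just been established for $G^n$), I would apply Lemma~\ref{lem.MFGdet} for each admissible initial density $m$ to obtain a unique deterministic-in-$\omega$ MFG pair $(u^{n,m},\mu^{n,m})$ on $[t^N_n,t^N_{n+1}]$, and set $V_n(x,m):=u^{n,m}(x,t^N_n)$; uniqueness then yields $\mathcal F_{t^N_n}$-measurability in $\omega$, and Proposition~\ref{prop.MFGdet} provides the strong monotonicity and H\"older continuity in $m$ of $V_n$, closing the induction.

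Once every $V_n$ is built, I would define the actual triple by a forward sweep: $\tilde \mu^N_0=\bar m_0$, and on $[t^N_n,t^N_{n+1})$ set $\tilde u^N:=u^{n,\tilde \mu^N_{t^N_n}}$ and $\tilde \mu^N:=\mu^{n,\tilde \mu^N_{t^N_n}}$, with jumps
\[
\Delta \tilde M^N_{t^N_{n+1}}(x)=\tilde u^N(x,t^{N,+}_{n+1})-\E\bigl[\tilde u^N(x,t^{N,+}_{n+1})\mid \mathcal F_{t^N_n}\bigr],
\]
which are mean-zero conditional on $\mathcal F_{t^N_n}$, so that $(\tilde M^N_{t^N_n}(x))_n$ is a $(\mathcal F_{t^N_n})$-martingale; adaptedness of $\tilde u^N$ and $\tilde \mu^N$ to $(\mathcal F^N_t)$ is built into the construction. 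The uniform bounds in (i) then come from iterating the estimates of Lemma~\ref{lem.MFGdet}: each interval of length $T/N$ adds only $O(T/N)$ to $\|V_n\|_\infty$, to the semiconcavity constant, and to $\|\tilde \mu^N_t\|_\infty$ (the last using the divergence bound $\mathrm{div}(D_p\tilde H(D\tilde u^N,x))\le C$ obtained from convexity of $\tilde H$, semiconcavity of $\tilde u^N$ and \eqref{H'4}), so summing $N$ contributions yields bounds of order $T$, independent of $N$. The Lipschitz bound on $\tilde u^N$ follows from its $L^\infty$-bound and semiconcavity by interpolation, and $\|\tilde M^N\|_\infty\le C$ follows by rewriting the discretized equation as
\[
\tilde M^N_t(x)=\tilde u^N_t(x)-\tilde u^N_0(x)-\int_0^t\bigl[\tilde H^N_s(D\tilde u^N_s,x)-\tilde F^N_s(x,\tilde \mu^N_s)\bigr]ds
\]
and invoking the uniform bounds on $\tilde u^N$ and $D\tilde u^N$ together with the growth of $\tilde H,\tilde F$ on bounded sets.

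The main obstacle will be the measurable backward propagation of the strong-monotonicity assumption \eqref{takis22} through the composition of a conditional expectation and a nonlinear MFG evolution: Jensen handles the conditional-expectation step and Proposition~\ref{prop.MFGdet} handles the MFG step, and both are essential to keep feeding the hypotheses of Lemma~\ref{lem.MFGdet} at the next backward stage.
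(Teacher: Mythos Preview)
Your proposal is correct and follows essentially the same route as the paper: backward induction to build $\mathcal F_{t^N_n}$-measurable value maps $V_n$ (the paper's $\tilde U^N_{t^N_n}$) by combining conditional expectation with the deterministic MFG solver of Lemma~\ref{lem.MFGdet}, then a forward sweep from $\bar m_0$ to define $(\tilde u^N,\tilde\mu^N,\tilde M^N)$, with the uniform bounds coming from the linear-in-time growth of the estimates in Lemma~\ref{lem.MFGdet}. Your treatment is in fact slightly more explicit than the paper's on one point: you spell out the Jensen argument showing that strong monotonicity in the sense of \eqref{takis22} survives the conditional-expectation step, which the paper takes for granted.
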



\begin{proof} We show  first $(\tilde u^N, \tilde M^N, \tilde \mu^N)$ is well-posed. For this, we define by backward induction a sequence of maps $\tilde U^N:\R^d\times [t^N_n, t^N_{n+1})\times \mathcal P_1(\R^d)\times \Omega \to \R$ such that, for each $t\in [t^N_n, t^N_{n+1})$,  $\tilde U^N_t$ is $\mathcal F_{t^N_n}-$measurable,  H\"{o}lder continuous in $m$ uniformly in $x$, bounded and semiconcave in  $x$ uniformly in $m$ and strongly monotone in the sense of \eqref{takis22}. 
\smallskip

We set $\tilde U^N_N= \tilde G$ and, given $\tilde U^N_{t^N_{n+1}}$, we  define $\tilde U^N_t$ on $[t^N_n, t^N_{n+1})$  as follows:  for any $\bar m_0\in \mathcal P_2\cap L^\infty$ we solve the MFG system
$$
\left\{\begin{array}{l}
- \partial_t \tilde v_t+\tilde H^N_t (D\tilde v_t, x)= \tilde F^N(x,\tilde m_t) \ \ {\rm in}\ \  \R^d\times (t^N_n,t^N_{n+1}), \\[2mm]
\partial_t \tilde m_t -{\rm div} (\tilde  m_t D_p\tilde H^N_t (D\tilde v_t, x))=0  \ \ {\rm in}\ \  \R^d\times (t^N_n,t^N_{n+1}),\\[2mm]
\tilde m_{t_n}= \bar m_0 \ \  \text{nd} \ \   \tilde v_{t^{N,-}_{n+1}}= \E\left[ \tilde U^N_{t^N_{n+1}}(\cdot,\tilde m^N_{t^{N,+}_{n+1}})\ |\ \mathcal F_{t^N_n} \right] \ \ \text{in} \ \ \R^d.
\end{array}\right. 
$$
We know from the discussion after Proposition~\ref{prop.MFGdet} that, if we set $\tilde U^N_t(x,\bar m_0)= \tilde v_{t}(x)$, then $\tilde U^N_t$ can be extended on $\R^d \times \mathcal P_1$ and satisfies the required regularity properties. 
\smallskip

In what follows it will be convenient to set $\tilde\rho^{N,n}_t(x,\bar m_0)= \tilde m_t(x)$. We remark  that $\tilde\rho^{N,n}$ is $\mathcal F_{t_n}-$measurable and continuous in $\bar m_0$ in $\mathcal P_1(\R^d)$. 
\smallskip

Given $\bar m_0\in \mathcal P_2\cap L^\infty$, we now build $(\tilde u^N, \tilde \mu^N, \tilde M^N)$. We set, for $t\in [0, t^N_1]$,  
$$
(\tilde u^N_t(x), \tilde \mu^N_t(x))= (\tilde U^N_t(x,\bar m_0), \tilde \rho^{N,0}_t(x,\bar m_0)),
$$
and note that, in view of  Lemma~\ref{lem.MFGdet}, $\tilde \mu^N_t$ is bounded in $L^\infty$ and has bounded finite second order moment both uniformly in $N$.  
Then, using a forward in time induction, we define, for $t\in [t^N_n,t^N_{n+1})$, 
$$
(\tilde u^N_t(x), \tilde \mu^N_t(x))= (\tilde U^N_t(x,\tilde \mu^N_{t^N_n}), \tilde \rho^{N,n}_t(x,\tilde \mu^{N}_{t^N_n})),
$$
and 
$$
\Delta\tilde M^N_t(x)  = \tilde u^N(x,t^{N,-}_{n+1})- \E\left[ \tilde u^N(x,t^{N,+}_{n+1}) | {\mathcal F}_{t^N_n}\right]\ \ {\rm on}\ \ (t^N_n, t^N_{n+1}),\\
$$
and 
$$
\tilde M^N_t(x) = \sum_{t^N_n<t} \Delta \tilde M^N_{t^N_n}(x). 
$$
In view of the  definition of $\tilde U^N$ and $\rho^{N,n}$, the triplet $(\tilde u^N, \tilde M^N, \tilde \mu^N)$ solves the required equations and is adapted to the discrete filtration $(\mathcal F^N_t)_{t\in [0,T]}$. The estimates on $\tilde u^N$ and $\tilde \mu^N$ follow from Lemma \ref{lem.MFGdet} applied on each time interval $(t^N_n, t^N_{n+1})$. We remark that  this is the place  we use that  the estimates in Lemma \ref{lem.MFGdet} is grow linearly in time. The bound on $\tilde M^N$ is  obtained as in the proof of Lemma \ref{lem.reguuNMN}. 
\end{proof}
\subsection{Passing to the limit}\label{subsec.lim}

The aim  is to pass to the limit in the discrete MFG system. Using the strong monotonicity of $\tilde F$ and $\tilde G$, we obtain the following estimate.

\begin{lem}\label{lem.hqens} Let $(\tilde u^N, \tilde M^N, \tilde \mu^N)$ be defined as above. There exists a random variable $\omega^N$ such that $\underset{N\to \infty} \E\Bigl[ \omega^N\Bigr]=0$ and, for $K, N\in \N$ with $K\geq N$,  
\begin{align*}
&  \E\Bigl[ \|\tilde G(\cdot,\tilde \mu^N_T)-\tilde G(\cdot,\tilde \mu^K_T)\|_\infty^{d+2} +\int_0^T\|\tilde F_t(\cdot, \tilde \mu^N_t)-\tilde F_t(\cdot, \tilde \mu^K_t)\|_\infty^{d+2} dt\Bigr]\leq C\E\Bigl[ \omega^N\Bigr].
\end{align*}
\end{lem}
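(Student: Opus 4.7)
The strategy is a discretized version of the classical Lasry--Lions uniqueness argument, combined with the martingale property of the jumps of $\tilde u^N$ and $\tilde u^K$. I will consider the process
$$
\Psi_t := \int_{\R^d} (\tilde u^N_t(x)-\tilde u^K_t(x))\,(\tilde\mu^N_t(x)-\tilde\mu^K_t(x))\,dx,
$$
noting that $\Psi_0 = 0$ since $\tilde\mu^N_0=\tilde\mu^K_0=\bar m_0$, and $\Psi_T = \int (\tilde G(x,\tilde\mu^N_T)-\tilde G(x,\tilde\mu^K_T))(\tilde\mu^N_T-\tilde\mu^K_T)\,dx$ because $\tilde u^N_T=\tilde G(\cdot,\tilde\mu^N_T)$ and similarly for $K$.

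On each subinterval of the joint partition $\{t^N_n\}\cup\{t^K_m\}$, both $\tilde u^N,\tilde u^K$ satisfy deterministic HJ equations with Hamiltonians $\tilde H^N_t, \tilde H^K_t$ and couplings $\tilde F^N_t(\cdot,\tilde\mu^N_t), \tilde F^K_t(\cdot,\tilde\mu^K_t)$, while $\tilde\mu^N,\tilde\mu^K$ solve the corresponding continuity equations. A standard integration by parts then gives, in the sense of distributions in $t$,
$$
\tfrac{d}{dt}\Psi_t = -\!\int (\tilde F^N_t(\cdot,\tilde\mu^N_t)-\tilde F^K_t(\cdot,\tilde\mu^K_t))(\tilde\mu^N_t-\tilde\mu^K_t)\,dx - \mathcal B^N_t - \mathcal B^K_t + \mathcal E_t,
$$
where
$\mathcal B^N_t = \int [\tilde H^N_t(D\tilde u^K_t,x)-\tilde H^N_t(D\tilde u^N_t,x) - D_p\tilde H^N_t(D\tilde u^N_t,x)\cdot(D\tilde u^K_t-D\tilde u^N_t)]\,\tilde\mu^N_t\,dx \geq 0$ by convexity of $\tilde H^N_t$ in $p$, similarly for $\mathcal B^K_t$, and $\mathcal E_t$ collects the terms produced by the mismatch $\tilde H^N_t\neq \tilde H^K_t$, which, since $K\geq N$, is bounded pointwise by the modulus $\omega^N_C$ from \eqref{H'5} (with $C$ the uniform $\|D\tilde u^N\|_\infty,\|D\tilde u^K\|_\infty,\|\tilde\mu^N\|_\infty,\|\tilde\mu^K\|_\infty$ bound).

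At each partition time $t^N_n$ (resp.\ $t^K_m$), the density $\tilde\mu^N,\tilde\mu^K$ is continuous while $\tilde u^N$ has a jump $\Delta\tilde M^N_{t^N_n}$ that is a centered $\mathcal F_{t^N_n}$-martingale increment; since $\tilde\mu^N_{t^N_n},\tilde\mu^K_{t^N_n}$ are $\mathcal F_{t^N_n}$-measurable, the contribution of such jumps to $\E[\Psi_T]$ vanishes. Integrating from $0$ to $T$, taking expectations, dropping the nonnegative Bregman terms, and replacing $\tilde F^N_t,\tilde F^K_t$ by $\tilde F_t$ at the cost of another $\omega^N_C$-type error give
$$
\E\!\left[\int (\tilde G(\cdot,\tilde\mu^N_T)-\tilde G(\cdot,\tilde\mu^K_T))(\tilde\mu^N_T-\tilde\mu^K_T) + \int_0^T\!\!\int (\tilde F_t(\cdot,\tilde\mu^N_t)-\tilde F_t(\cdot,\tilde\mu^K_t))(\tilde\mu^N_t-\tilde\mu^K_t)\right] \leq C\,\E[\omega^N],
$$
with $\omega^N := \omega^N_C$. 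The conclusion follows by applying the strong monotonicity \eqref{H'6} of $\tilde F_t$ and $\tilde G$ together with the interpolation argument of Remark~\ref{manos} to convert the left-hand side into the $L^\infty$-norms to the power $d+2$.

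The main technical nuisance will be bookkeeping on the joint partition $\{t^N_n\}\cup\{t^K_m\}$ when the two grids do not align: one must split each integration-by-parts computation on the finer partition and verify that every jump term has vanishing expectation because the multiplying density is adapted to the appropriate sub-$\sigma$-algebra. The convexity inequality producing $\mathcal B^N_t,\mathcal B^K_t\geq 0$ and the martingale cancellation of $\Delta\tilde M^N,\Delta\tilde M^K$ against the continuous adapted densities are the two mechanisms making the argument work, mirroring exactly the deterministic Lasry--Lions monotonicity computation.
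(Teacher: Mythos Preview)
Your proposal is correct and follows exactly the paper's approach: the discretized Lasry--Lions computation on $\E[\int(\tilde u^N_t-\tilde u^K_t)(\tilde\mu^N_t-\tilde\mu^K_t)\,dx]$, with the convexity of $\tilde H$ furnishing the nonnegative Bregman terms, the discretization error absorbed into $\omega^N$, and the strong monotonicity plus interpolation yielding the $\|\cdot\|_\infty^{d+2}$ bound. One small imprecision: the jump of $\tilde u^N$ at $t^N_n$ is centered given $\mathcal F_{t^N_{n-1}}$, not $\mathcal F_{t^N_n}$, so the cancellation requires $\tilde\mu^N_{t^N_n},\tilde\mu^K_{t^N_n}$ to be $\mathcal F_{t^N_{n-1}}$-measurable; this holds for $\tilde\mu^N_{t^N_n}$ by construction but not always for $\tilde\mu^K_{t^N_n}$ when the grids do not nest, a point the paper also leaves implicit.
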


\begin{proof} Using the fact that the pair $(\tilde u^N, \tilde \mu^N)$ is piecewise classical solution of the MFG system, with $\tilde \mu^N$ continuous in time and adapted, we find, following the classical  Lasry-Lions computation, that 
\begin{align*}
& \frac{d}{dt} \E\Bigl[ \int_{\R^d} (\tilde u^N_t(x)-\tilde u^K_t(x))(\tilde \mu^N_t(x)-\tilde \mu^K_t(x)) dx \Bigr] \\[2mm] 
& =  \E\Bigl[ \int_{\R^d} (\tilde H^N_t (D\tilde u^N_t(x),x)-\tilde F^N_t(x, \tilde \mu^N_t)-\tilde H^K_t (D\tilde u^K_t(x),x)+\tilde F^K_t(x, \tilde \mu^K_t)) (\tilde \mu^N_t(x)-\tilde \mu^K_t(x)) dx \\[2mm]
&  + \int_{\R^d} (\tilde u^N_t(x)-\tilde u^K_t(x))({\rm div}(\tilde \mu^N_t(x)D_p\tilde H^N_t(D\tilde u^N_t(x),x))- {\rm div}(\tilde \mu^K_t(x)D_p\tilde H^K_t(D\tilde u^K_t(x),x)))dx \Bigr]\\[2mm]
& =  -\E\Bigl[ \int_{\R^d} (\tilde F^N_t(x, \tilde \mu^N_t)-\tilde F^K_t(x, \tilde \mu^K_t)) (\tilde \mu^N_t(x)-\tilde \mu^K_t(x)) dx\Bigr]\\ 
&  - \E\Bigl[ \int_{\R^d} \mu^N_t(x)\Bigl(\tilde H^K_t (D\tilde u^K_t(x),x) -  \tilde H^N_t (D\tilde u^N_t(x),x) \\
& - (D\tilde u^N_t(x)- D \tilde u^K_t(x))\cdot D_p\tilde H^N_t(D\tilde u^N_t(x),x)\Bigr) dx  + \int_{\R^d} \mu^K_t(x)\Bigl( \tilde H^N_t (D\tilde u^N_t(x),x) \\[2mm]
&-\tilde H^K_t (D\tilde u^K_t(x),x) - (D\tilde u^K_t(x)- D \tilde u^N_t(x))\cdot D_p\tilde H^K_t(D\tilde u^K_t(x),x)\Bigr) dx \Bigr].
\end{align*}

In order to use the strong monotonicity assumption on $\tilde F$ and the convexity of $\tilde H$, we replace the discretized maps $\tilde F^N$ and $\tilde H^N$ by the continuous ones and  find 
\begin{align*}
& \frac{d}{dt} \E\Bigl[ \int_{\R^d} (u^N_t(x)-u^K_t(x))(\tilde \mu^N_t(x)-\tilde \mu^K_t(x)) dx \Bigr] \\[2mm] 
& \leq  -\E\Bigl[ \int_{\R^d} (\tilde F_t(x, \tilde \mu^N_t)-\tilde F_t(x, \tilde \mu^K_t)) (\tilde \mu^N_t(x)-\tilde \mu^K_t(x)) dx\Bigr]+C\E\Bigl[ \omega^N\Bigr] ,
\end{align*}
where, with $C$ being the uniform bound on $D\tilde u^N$ and on $D\tilde u^K$,
\be\label{defOmegaNBis}
\omega^N= \sup_{x\in \R^d, |t-s|\leq 1/N, |p|\leq C, m\in {\mathcal P}_1} \Big[|\tilde F_t(x,m)-\tilde F_s(x,m)| + |\tilde H_t(p,x)-\tilde H_s(p,x)|\Big]. 
\ee
\smallskip

Integrating in time inequality above and using the fact that $\mu^N_0=\mu^K_0$, we get
\begin{align*}
&  \E\Bigl[\int_{\R^d} (\tilde G(x,\tilde \mu^N_T)-\tilde G(x,\tilde \mu^K_T))(\tilde \mu^N_T(x)-\tilde \mu^K_T(x)) dx \Bigr] \\[2mm] 
& +\E\Bigl[\int_0^T \int_{\R^d} (\tilde F_t(x, \tilde \mu^N_t)-\tilde F_t(x, \tilde \mu^K_t)) (\tilde \mu^N_t(x)-\tilde \mu^K_t(x)) dxdt\Bigr]\leq C\E\Bigl[ \omega^N\Bigr] .
\end{align*}
Therefore, in view of the  strong monotonicity  of $\tilde F$ and $\tilde G$ in \eqref{takis22} and \eqref{takis23},  we find 
\begin{align*}
&  \E\Bigl[ \int_{\R^d} (\tilde G(x,\tilde \mu^N_T)-\tilde G(x,\tilde \mu^K_T))^2 dx +\int_0^T \int_{\R^d} (\tilde F_t(x, \tilde \mu^N_t)-\tilde F_t(x, \tilde \mu^K_t))^2 dxdt\Bigr]\leq C\E\Bigl[ \omega^N\Bigr] .
\end{align*}
Finally, the uniform Lipschitz regularity of $\tilde F$ and $\tilde G$ in space and an elementary interpolation yield  
\begin{align*}
&  \E\Bigl[ \sup_{x}|\tilde G(x,\tilde \mu^N_T)-\tilde G(x,\tilde \mu^K_T)|^{d+2} +\int_0^T\sup_{x}|\tilde F_t(x, \tilde \mu^N_t)-\tilde F_t(x, \tilde \mu^K_t)|^{d+2} dt\Bigr]\leq C\E\Bigl[ \omega^N\Bigr] .  
\end{align*}
\end{proof}

Next we estimate the difference between $\tilde u^N$ and $\tilde u^K$. 

\begin{lem}\label{lem.tildeuNCauchy} The sequence $(\tilde u^N)_{N\geq 1}$ is a Cauchy with the respect  to the family of seminorms 
$$
\left(\sup_{t\in [0,T]}  \E\left[\|\tilde u_t\|_{L^\infty (B_R)}^{d+1}\right]\right)_{R>0}
$$

\end{lem}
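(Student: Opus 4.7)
My plan is to mimic the Douglis-type argument used in the proof of Theorem~\ref{thm.mainexists}, but now carry along the extra source terms coming from the coupling functions $\tilde F$ and $\tilde G$, and control those extra terms in expectation using Lemma~\ref{lem.hqens}.

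Fix $N<K$, let $\phi:\R\to [0,\infty)$ be smooth, convex, nonincreasing with $\|\phi'\|_\infty\le 2$ and $\phi$ supported in $(-\infty,0]$, and set $w_t(x)=\phi(\tilde u^N_t(x)-\tilde u^K_t(x))$. Writing the backward-induction equations satisfied by $(\tilde u^N,\tilde M^N)$ and $(\tilde u^K,\tilde M^K)$, the convexity of $\phi$ cancels the jump terms in the discrete It\^o expansion, so that across each subinterval and after taking expectations,
\begin{align*}
\E[w_{t+h}(x)-w_t(x)] &\ge \E\Bigl[\int_t^{t+h} \phi'\bigl(\tilde H^N_s(D\tilde u^N_s,x)-\tilde H^K_s(D\tilde u^K_s,x)\bigr)ds\Bigr]\\
&\qquad -\E\Bigl[\int_t^{t+h}\phi'\bigl(\tilde F^N_s(x,\tilde\mu^N_s)-\tilde F^K_s(x,\tilde\mu^K_s)\bigr)ds\Bigr].
\end{align*}
Linearizing the Hamiltonian difference along $D\tilde u^N,D\tilde u^K$, as in the proof of Theorem~\ref{thm.mainexists}, the first term rewrites as $\E\int_t^{t+h}(\mathbf{b}_s\cdot Dw_s+\zeta_s)ds$ where $\zeta_s$ is controlled by $\|\phi'\|_\infty\omega^N_C$. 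The new ingredient is the extra $\tilde F$-term.

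Next I introduce the regularization $\mathbf{b}^\ep$ of $\mathbf{b}$ exactly as in the HJ proof, so that $\|\mathbf{b}^\ep\|_\infty\le C_1$ and ${\rm div}(\mathbf{b}^\ep_s)\le\tilde C$, integrate over the moving ball $B_{\alpha+\beta s}$ with $\beta=C_1$, integrate by parts and apply Gronwall. The analogue of \eqref{takis11} becomes
\begin{equation*}
e_t:=\E\Bigl[\int_{B_{\alpha+\beta t}}w_t(x)dx\Bigr]\le C\Bigl(e_T+\E\Bigl[\int_t^T\!\!\int_{B_{\alpha+\beta s}}(|\zeta_s|+2|\tilde F^N_s(x,\tilde\mu^N_s)-\tilde F^K_s(x,\tilde\mu^K_s)|)dxds\Bigr]\Bigr).
\end{equation*}
Choosing (after approximation) $\phi(r)=(-r)_+$ gives $e_T\le C\,\E\|\tilde G(\cdot,\tilde\mu^N_T)-\tilde G(\cdot,\tilde\mu^K_T)\|_{L^\infty(B_{\alpha+\beta T})}$. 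The term involving $\zeta_s$ is bounded by $C\E[\omega^N_C]$ thanks to \eqref{H'5}, and the $\tilde F$-term (together with $e_T$) is controlled by $C\,\E[\omega^N]^{1/(d+2)}$ using Lemma~\ref{lem.hqens} and Jensen's inequality. Exchanging the roles of $N$ and $K$ we obtain, for every $t\in[0,T]$,
\begin{equation*}
\E\Bigl[\int_{B_{\alpha+\beta t}}|\tilde u^N_t(x)-\tilde u^K_t(x)|\,dx\Bigr]\le C_{\alpha,\beta}\bigl(\E[\omega^N_C]+\E[\omega^N]^{1/(d+2)}\bigr).
\end{equation*}

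Finally, since the $\tilde u^N$'s are uniformly Lipschitz in $x$ by the estimate in the preceding lemma, the interpolation inequality $\|u\|_{L^\infty(B_R)}\le C_R\|Du\|_{L^\infty(B_R)}^{d/(d+1)}\|u\|_{L^1(B_R)}^{1/(d+1)}$ upgrades the $L^1$ estimate to
\begin{equation*}
\sup_{t\in[0,T]}\E\bigl[\|\tilde u^N_t-\tilde u^K_t\|_{L^\infty(B_R)}^{d+1}\bigr]\le C_R\bigl(\E[\omega^N_C]+\E[\omega^N]^{1/(d+2)}\bigr),
\end{equation*}
which tends to zero as $N\to\infty$ by \eqref{H'5}, giving the Cauchy property.

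The main difficulty is the bookkeeping: the coupling $\tilde F$ introduces a non-vanishing (a.s.) source that one cannot handle pointwise in $\omega$ as was done for the pure HJ equation. The crucial observation is that Lemma~\ref{lem.hqens} gives quantitative control in expectation of exactly the $L^\infty$-norm of $\tilde F(\cdot,\tilde\mu^N)-\tilde F(\cdot,\tilde\mu^K)$ (and the analogous terminal term for $\tilde G$) by a quantity tending to $0$, which is what makes the Douglis scheme close.
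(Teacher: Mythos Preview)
Your proof is correct and follows essentially the same Douglis-type scheme as the paper: linearize the Hamiltonian difference, integrate over moving balls, use the one-sided divergence bound on $\mathbf{b}^\ep$ and Gronwall, then control the terminal $\tilde G$-term and the running $\tilde F$-term via Lemma~\ref{lem.hqens}, symmetrize, and interpolate from $L^1$ to $L^\infty$. The only cosmetic difference is that the paper bundles the $\tilde F$-contribution into $\zeta_s$ rather than keeping it as a separate source term, and the paper leaves the passage from the $(d+2)$-power estimate in Lemma~\ref{lem.hqens} to the $L^1$ bound implicit, whereas you spell out the Jensen step explicitly.
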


\begin{proof} Since the arguments are almost identical to the ones used in the proof of Theorem \ref{thm.mainexists} and Proposition \ref{prop.comparison}, here we only present a sketch.
\smallskip
 
Let  $1<N<K$ and $\phi:\R\to \R^+$ smooth, Lipschitz continuous, convex and nonincreasing map, and  set  $w_t(x)= \phi(\tilde u^N_t(x)-\tilde u^K_t(x))$. Using induction and the  convexity of $\phi$ to cancel the jump terms, which  are martingales,  
for any $t\in [0,T)$ and $h\in(0,T-h)$, we find 
\begin{align*}
& \E\Bigl[ w_{t+h}(x)- w_t(x) \Bigr] \\
& \geq  \E\Bigl[  \int_t^{t+h} \phi'(\tilde u^N_s(x)-\tilde u^K_s(x)) (\tilde H^N_s(D\tilde u^N(x)_s,x)-\tilde F^N_s(x,\tilde \mu^N_s)\\
& -\tilde H^K_s(D\tilde u^K_s(x),x)+\tilde F^N_s(x,\tilde \mu^K_s)) ds \Bigr]
 \geq \E\Bigl[   \int_t^{t+h} ({\bf b}_s(x)\cdot D w_s(x) + \zeta_s(x))ds\Bigr], 
\end{align*}
where 
\begin{align*}
& \zeta_s(x)=  \phi'(\tilde u^N_s(x)-\tilde u^K_s(x))(\tilde H^N_s(D\tilde u^K_s(x),x)-\tilde H^K_s(D\tilde u^K_s(x),x)-\tilde F^N_s(x,\tilde \mu^K_s)+\tilde F^N_s(x,\tilde \mu^N_s))\\[2mm]
& \text{and}\\
& {\bf b}_s(x)= \int_0^1 D_p\tilde H^N_s((1-\lambda)D\tilde u^N_s(x)+\lambda D\tilde u^K_s(x))d\lambda. 
\end{align*}  
For some $\alpha, \beta>0$ to be chosen below, let 
$$
e_t = \E\left[ \int_{B_{\alpha+\beta t}} w_t(y)dy\right]. 
$$
As in the proof of Theorem \ref{thm.mainexists} and for $\beta$  large enough, but independent of $N$ and $K$, we get 
$$
e_t\leq C (e_T+ \E\Bigl[\int_t^T \int_{B_{\alpha+\beta s}} \zeta_s(y)dyds\Bigr]).
$$
Choosing, after approximation,  $\phi(s)= (-s)_+$, we derive that, for all $t\in [0,T]$,  $\omega^N$ is as in \eqref{defOmegaNBis}, and constants $C>0$ and $C_{\alpha, \beta}=C(\alpha, \beta)>0$, 
\begin{align*}
&\E\left[ \int_{B_{\alpha+\beta t}} (- (\tilde u^N_t(y)- \tilde u^K_t(y)))_+dy\right] \leq C \E\left[ \int_{B_{\alpha+\beta t}} (- (\tilde G(y,\tilde \mu^N_T)-  \tilde G(y, \tilde \mu^K_T)))_+dy\right]\\
& \qquad +  
C \E\Bigl[ \int_t^T\int_{B_{\alpha+\beta s}} |\tilde F_s(y,\tilde \mu^N_T)-  \tilde F_s(y, \tilde \mu^K_T)| dyds\Bigr]
+ C_{\alpha,\beta} \E[\omega^N]. 
\end{align*}
Reversing the roles of $u^N$ and $u^K$, we then obtain, for all $t\in [0,T]$, 
\begin{align*}
&\E\left[ \int_{B_{\alpha+\beta t}} |\tilde u^N_t(y)- \tilde u^K_t(y)|dy\right] \leq C \E\left[ \int_{B_{\alpha+\beta t}} |\tilde G(y,\tilde \mu^N_T)-  \tilde G(y, \tilde \mu^K_T)|dy\right]\\
& \qquad +  
C \E\Bigl[ \int_t^T\int_{B_{\alpha+\beta s}} |\tilde F_s(y,\tilde \mu^N_T)-  \tilde F_s(y, \tilde \mu^K_T)| dyds\Bigr]
+ C_{\alpha,\beta} \E[\omega^N]. 
\end{align*}
It follows from Lemma \ref{lem.hqens}, for some $\ep_{\alpha,\beta}(N)\to 0$ as $N\to+\infty$, 
\begin{align*}
&\E\left[ \int_{B_{\alpha+\beta t}} |\tilde u^N_t(y)- \tilde u^K_t(y)|dy\right] \leq \ep_{\alpha,\beta}(N). 
\end{align*}
The uniform Lipschitz estimate for the $\tilde u^N$ gives the result. 
\end{proof}

We have now established all the ingredients needed for the proof of the existence and uniqueness of solution of the stochastic MFG system.
 \begin{proof}[Proof of Theorem \ref{thm.main}] 
Lemma~\ref{lem.tildeuNCauchy} and the properties of $\tilde F$ and $\tilde G$ yield that the sequences $(\tilde u^N)_{N\in \N}$, $(\tilde F(\cdot, \tilde \mu^N_\cdot))_{N\in \N}$ and $\tilde  G(\cdot,\tilde \mu^N_T)_{N\in \N}$ are   Cauchy, with respective limits  $\tilde u$, $\tilde f$ and $\tilde g$.
\smallskip

It follows, as in the proof of Theorem \ref{thm.mainexists}, that the sequences $(D\tilde u^N)_{N\in \N}$ and $(\tilde M^N)_{N\in \N}$ also converge, as $N\to \infty$, to $D\tilde u$ and $\tilde M$ respectively, $\tilde M$ is a continuous process, and,  in addition, $(\tilde u,\tilde M)$ solves 
$$
d\tilde u_t = (\tilde H_t(D\tilde u_t,x)-\tilde f_t(x))dt+ d\tilde M_t \ \text{in}  \ \R^d\times[0,T)  \ \ \  \tilde u_T= \tilde g \ \text{on} \ \R^d. 
$$
Next we need  to  check that the sequence $(\tilde \mu^N)_{N\in \N}$ has a limit $\tilde \mu$ and that   $\tilde f_t(x)=\tilde F_t(x,\tilde \mu_t)$ and $\tilde g(x)= \tilde G(x,\tilde \mu_T)$. 
\smallskip

Fix $\omega$ for which $\tilde u^N$ converges to $\tilde u$ locally uniformly and $D\tilde u^N$ converges to $D\tilde u$ a.e.. In view of the bound on $(\tilde \mu^N)_{N\in \N}$,  the sequence $(\tilde \mu^N)_{N\in \N}$ is relatively compact in  $C([0,T]; {\mathcal P}_1(\R^d))$ and in $L^\infty$ weak-*. So 
we can find a subsequence, which we  denote in the same way, which converges, in  $C([0,T]; {\mathcal P}_1(\R^d))$ and in $L^\infty-$weak-*, to some $\tilde \mu$, which is a bounded solution 
of the continuity equation 
\[ \partial_t\tilde \mu_t =  {\rm div}(\tilde \mu_tD_p\tilde H_t(D\tilde u_t,x))\  \text{in}  \ \R^d\times(0,TT  \ \ \  \tilde \mu_0=\bar m_0 \ \text{on} \R^d.\]
 In addition, the continuity of $\tilde F_t$ with respect to the measure argument implies that  $(\tilde F_t(x, \tilde \mu^N_t))$ converges to $\tilde F_t(x, \tilde \mu_t)$ for any $(x,t)\in [0,T]\times \R^d$. Hence,   
 \be\label{takis3333}
 \tilde F_t(x, \tilde \mu_t)= \tilde f_t(x).
 \ee
Since \eqref{takis3333} holds true along any converging subsequence, we infer from   the strict monotonicity of  $\tilde F_t$
that the compact sequence $(\tilde \mu^N)_{N\in \N}$ has a unique accumulation point in $C([0,T]; {\mathcal P}_1(\R^d))$, and, thus it converges a.s. to an adapted and bounded process $\tilde \mu$ satisfying the continuity equation. 
 \smallskip

It follows that  the sequences $(\tilde F^N_t(x, \tilde \mu^N_t))_{N\in \N}$ and $(\tilde G^N(x, \tilde \mu^N_T))_{N\in \N}$ converge locally uniformly to $\tilde F_t(x, \tilde \mu_t)$ and $\tilde G(x, \tilde \mu_T)$ respectively. We can therefore conclude that the pair $(\tilde u,\tilde M, \tilde \mu)$ is a solution to the MFG system \eqref{stoMFG_Intro}. 
\smallskip

Since the proof of the uniqueness of solutions follows the standard argument,  we only sketch it. Let ($\tilde u^1,\tilde M^1, \tilde m^1)$ and $(\tilde u^2,\tilde M^2,\tilde m^2)$ be two solutions of \eqref{stoMFG_Intro}. We show that 
\begin{align}
&\E\Bigl[ \int_0^T \int_{\R^d} (\tilde F_t(x,\tilde m^1_t) - \tilde F_t(x,\tilde m^2_t))(\tilde m^1_t-\tilde m^2_t)(dx)dt  \label{jelrlznesrdd}\\
& \qquad \qquad  \qquad  \qquad + \int_{R^d} (\tilde G(x,\tilde m^1_T) - \tilde G(x,\tilde m^2_T))(\tilde m^1_T-\tilde m^2_T)(dx)\Bigr] \leq 0. \notag
\end{align}
For this, for $n=0,\ldots,N$,   let $t^N_n=nT/N$, and  note that, in view of   the equation satisfied by  $\tilde u = \tilde u^1-\tilde u^2$, we have, letting $\tilde M= \tilde M^1-\tilde M^2$,
\begin{align*}
 \tilde u_{t^N_{n+1}}(x)  -\tilde u_{t^N_{n}}(x)&= -\int_{t^N_n}^{t^N_{n+1}}( \tilde H_t(D\tilde u^1,x)-\tilde F_t(x,\tilde m^1(t))- \tilde H_t(D\tilde u^2,x)+\tilde F_t(x,\tilde m^2(t))dt\\
 & \qquad - (\tilde M_{t^N_{n+1}}(x)-\tilde M_{t^N_n}(x)).
\end{align*}

Let $\tilde m=\tilde m^1-\tilde m^2$. Integrating the equality above against $\tilde m_{t^N_n}$ and summing over $n$ gives
\begin{align*}
& \sum_{n=0}^{N-1}  \int_{\R^d} (\tilde u_{t^N_{n+1}}(x)- \tilde u_{t^N_{n}}(x))\tilde m_{t^N_n}(x)dx \\
& \qquad = - \sum_{n=0}^{N-1}  \int_{t^N_n}^{t^N_{n+1}}\int_{\R^d}( \tilde H_t(D\tilde u^1(x),x)-\tilde F_t(x,\tilde m^1_t)- \tilde H_t(D\tilde u^2_t(x),x)+\tilde F_t(x,\tilde m^2_t))\tilde m_{t^N_n}(x)dxdt  \\
& \qquad\qquad\qquad -\sum_{n=0}^{N-1}  \int_{\R^d} (\tilde M_{t^N_{n+1}}(x)-\tilde M_{t^N_n}(x)) \tilde m_{t^N_n}(x)dx . 
\end{align*}
After reorganizing  the left-hand side above by taking into account the equation satisfied by $\tilde m$ yields 
\begin{align*}
& \sum_{n=0}^{N-1}  \int_{\R^d} (\tilde u_{t^N_{n+1}}(x)- \tilde u_{t^N_{n}}(x))\tilde m_{t^N_n}(x)dx \\
&  = \int_{\R^d} \tilde u_{T}(x)\tilde m_{t^N_{N-1}}(x)dx-  \int_{\R^d} \tilde u_0(x)\tilde m_0(x)dx-  \sum_{n=1}^{N-1}  \int_{\R^d} \tilde u_{t^N_{n}}(x)(\tilde m_{t^N_n}(x)-\tilde m_{t^N_{n-1}}(x))dx \\
&  = \int_{\R^d} (\tilde G(x, \tilde m^1_T)-\tilde G(x,\tilde m^2_T))(\tilde m^1_{t^N_{N-1}}(x)-\tilde m^2_{t^N_{N-1}}(x))dx \\
& \qquad + \sum_{n=1}^{N-1}\int_{t^N_{n-1}}^{t^N_n}  \int_{\R^d} D\tilde u_{t^N_{n}}(x)\cdot (D_p \tilde H_{t}(D\tilde u^1_t(x),x)\tilde m^1_t(x)-D_p \tilde H_{t}(D\tilde u^2_t(x),x)\tilde m^2_t(x))dxdt. 
\end{align*}
We let $N\to+\infty$ and find, after taking expectation,  
\begin{align*}
& \E\Bigl[ \int_{\R^d} (\tilde G(x, \tilde m^1_t)-\tilde G(x,\tilde m^2_T))(\tilde m^1_T(x)-\tilde m^2_T(x))dx \\
& \qquad +\int_0^T \int_{\R^d} D\tilde u_{t}(x)\cdot (D_p \tilde H_{t}(D\tilde u^1_t(x),x)\tilde m^1_t(x)-D_p \tilde H_{t}(D\tilde u^2_t(x),x)\tilde m^2_t(x))dxdt\Bigr]\\
& =  -\E\Bigl[\int_{0}^{T}\int_{\R^d}( \tilde H_t(D\tilde u^1_t(x),x)-\tilde F_t(x,\tilde m^1_t)- \tilde H_t(D\tilde u^2_t(x),x)+\tilde F_t(x,\tilde m^2_t))\tilde m_{t}(x)dxdt  \Bigr] .
\end{align*}
We can now  rearrange the expression in the usual way, and taking into account the convexity of $\tilde H_t=\tilde H_t(p,x)$ in $p$, to conclude that \eqref{jelrlznesrdd} holds. 
\smallskip

Using  the strict monotonicity assumption on $\tilde F$ we infer that, $\P-$a.s. and for a.e. $(x,t)\in [0,T]\times \R^d$,  $\tilde m^1_t=\tilde m^2_t$. Thus  $\tilde u^1$ and $\tilde u^2$ solve the same HJ equation. It follows from Proposition \eqref{prop.comparison} that $\tilde u^1=\tilde u^2$.  
\smallskip

The equality $\tilde M^1=\tilde M^2$ follows from the equation satisfied by the $\tilde u^i$. 
\end{proof}

\subsection{Application to $N-$player differential games}\label{subsec.game}

We consider here  a game with $N$ players and show that, if $N$ is large enough, the optimal controls given by the solution of the stochastic MFG system \eqref{stoMFG_Intro}  provide an approximate Nash equilibrium for the game. 
\smallskip

We begin with the notation, terminology and the general setting.  In what follows,  $N\in \N$, $\bar m_0\in \mathcal P_2(\R^d)$ with an $L^\infty-$density,  $(Z^{i})_{i\in \N}$ is a sequence of independent random initial conditions on $\R^d$  with law $\bar m_0$, and $W$ is a Brownian motion independent of the $(Z^{i})_{i\in \N}$.  
\smallskip

The state $X^{\alpha^i}$  of the $i-$th player  satisfies, for $i=1,\ldots,N$,  the stochastic differential equation 
\be\label{eq.XNi}
dX^{\alpha^i}_t= \alpha^{i}_tdt + \sqrt{2\beta}dW_t \ \ \text{in} \ \  [0,T] \ \ \ \  X^{\alpha^i}_0=Z^{i},
\ee
with  $\alpha^{i}$ an admissible control of player $i$, that is, an $\R^d-$valued  measurable process  adapted to the filtration generated by $(W_s)_{s\leq t}$ and the $(Z^j)_{j\in \N}$, and  such that $\E[\int_0^T |\alpha^{i}_t|^2dt]<+\infty$. 
Note that the noise $W$ is the same for all the players. 
\smallskip

The cost of player $i$, associated to the admissible control $\alpha^i$ and given the admissible controls of the $(\alpha^{j})_{j\neq i}$ of the other players, is 
$$
J^{N,i}(\alpha^{i}, (\alpha^{j}_{j\neq i}))= \E\left[ \int_0^T (L(\alpha^{i}_t, X^{\alpha^i}_t)+ F(X^{\alpha^i}_t, m^{N,i}_{{\bf X}_t}))dt + g(X^{\alpha^i}_T, m^{N,i}_{{\bf X}^N_T})\right], 
$$
where ${\bf X}_t =(X^{\alpha^1}_t, \dots, X^{\alpha^N}_t)$ with $X^{\alpha^j}$ a solution of \eqref{eq.XNi} and 
$$
m^{N,i}_{{\bf X}_t}= \frac{1}{N-1}\sum_{j=1,j\neq i}^N \delta_{X^{\alpha^j}_t}.
$$
Given $\ep>0$, we say that the family  $(\bar \alpha^i)_{i=1,\ldots, N}$ of admissible controls is an $\epsilon-$ Nash equilibrium of the game, if, for any $i=1, \dots, N$ and for any admissible control $\alpha^i$ of the player $i$, 
$$
J^{N,i}(\bar \alpha^i, (\bar \alpha^{j})_{j\neq i}) \leq J^{N,i}( \alpha^i, (\bar \alpha^{j})_{j\neq i})+\ep. 
$$ 
The  associated Hamiltonian of is 
$$
H(p,x)= \sup_{\alpha\in \R^d}[ -p\cdot \alpha- L(\alpha, x)].
$$
We assume that  
\be\label{takis30}
\begin{cases}
F,G:\R^d\times \mathcal P_1\to \R \ \text{are globally Lipschitz continuous and, for some $C_0>0$},\\[2mm] 
\qquad \qquad \underset{m\in \mathcal P_1} \sup  \|F(\cdot, m)\|_{C^2}+\|G(\cdot, m) \|_{C^2} \leq C_0,
\end{cases}
\ee
\be\label{takis31}
\text{$F$ and $G$ are strongly monotone with constant $\alpha$ and $F$ is strictly monotone}
\ee
and the Hamiltonian $H$ is of the form
\be\label{takis32}
\begin{cases}
H(p, x)= \frac{a(x)}{2}|p|^2 + B(x)\cdot p \\[2mm]
\text{with $a\in C^2(\R^d)$, $B\in C^2(\R^d;\R^d) $ and  $C_0^{-1}\leq a(x)\leq C_0$.}
\end{cases}
\ee  
It is then immediate that  $$L(\alpha, x)= \dfrac{1}{2a(x)}|\alpha+B(x)|^2.$$

As in the earlier parts  of the paper, we set
\be\label{takis33} 
\begin{cases}
\tilde H_t(x,p)= H(p, x+\sqrt{2\beta} W_t),\\[2mm]
\tilde F_t(x,m)= F(x+\sqrt{2\beta}W_t,(id+\sqrt{2\beta}W_t)\sharp m_t)  \ \ \text{and}\\[2mm]
\tilde G(x)= G(x-\sqrt{2\beta} W_T, (id+\sqrt{2\beta}W_T)\sharp m_T).
\end{cases}
\ee

 In view of the conditions above, $\tilde H$, $\tilde F$, $\tilde G$ and $\bar m_0$ satisfy \eqref{H'1},  \eqref{H'2}, \eqref{H'3}, \eqref{H'4}, \eqref{H'5}, and \eqref{H'6}.
\smallskip

We denote by $(\tilde u, \tilde M, \tilde m)$ the solution  of  \eqref{stoMFG_Intro}, and recall that 
Proposition \ref{prop.ExistOptTraj} implies, for a.e. $x\in \R^d$, the  existence of  a family $(\bar \gamma^{x}_t)_{t\in [0,T]}$ of adapted processes which minimize 
\be\label{kejhsnrdt=fv}
\tilde u_0(x) =\inf_{\gamma_0=x}
 \E\left[ \int_t^T (\tilde L_s(\dot \gamma_s,\gamma_s)+ \tilde F_s(\gamma_s, \tilde m_s))ds +\tilde G(\gamma_T, \tilde m_T)\right],
\ee
where 
$$\tilde L_t(p, \alpha,x, \omega)= \dfrac{1}{2\tilde a_t(x)}|p +\tilde B_t(x)|^2 \ \  \text{and} \ \ \tilde a_t(x)= a(x+\sqrt{2\beta}W_t), \  \tilde B_t(x)= B(x+\sqrt{2\beta}W_t).$$
Set $\bar \alpha^x_t = \dot{\bar \gamma}^x_t= -D_p\tilde H_t(D\tilde u_t(\bar \gamma^x_t), \bar \gamma^x_t)$. 

\begin{prop}\label{prop.cv} Assume  \eqref{H'1}, \eqref{takis30}, \eqref{takis31} and \eqref{takis32}.
Then,  for any $\ep>0$, there exists $N_0=N_0(\ep)\in \N$ such that, for any $N\geq N_0$, the family of random controls $(\bar \alpha^{Z^{i}})_{i=1, \dots, N}$ is an $\epsilon-$Nash equilibrium of the game.
\end{prop}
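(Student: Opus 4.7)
The plan is to pass to the ``tilde'' coordinates in which the common noise disappears from the state equation, then compare the empirical measure of the other players' tilde-trajectories to the MFG measure $\tilde m_t$ via a conditional law of large numbers, and finally invoke the MFG optimality from Proposition~\ref{prop.ExistOptTraj}.

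\textbf{Step 1 (Translation).} For any admissible profile $(\alpha^1,\ldots,\alpha^N)$, set $\tilde X^{\alpha^i}_t := X^{\alpha^i}_t - \sqrt{2\beta}\,W_t$, so that $d\tilde X^{\alpha^i}_t=\alpha^i_t\,dt$ with $\tilde X^{\alpha^i}_0=Z^i$. Using the quadratic form \eqref{takis32} and the definitions \eqref{takis33}, $L(\alpha^i_t,X^{\alpha^i}_t)=\tilde L_t(\alpha^i_t,\tilde X^{\alpha^i}_t)$ and similarly for $F$, $G$, so
\[
J^{N,i}(\alpha^i,(\alpha^j)_{j\neq i})
= \E\Big[\int_0^T \bigl(\tilde L_t(\alpha^i_t,\tilde X^{\alpha^i}_t)+\tilde F_t(\tilde X^{\alpha^i}_t,\tilde m^{N,i}_{\tilde{\bf X}_t})\bigr)dt+\tilde G(\tilde X^{\alpha^i}_T,\tilde m^{N,i}_{\tilde{\bf X}_T})\Big],
\]
with $\tilde m^{N,i}_{\tilde{\bf X}_t}:=\tfrac{1}{N-1}\sum_{j\neq i}\delta_{\tilde X^{\alpha^j}_t}$. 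When everyone plays $\bar\alpha^{Z^j}$, we get $\tilde X^{\alpha^j}_t=\bar\gamma^{Z^j}_t$ with $\bar\gamma^{x}$ from Proposition~\ref{prop.ExistOptTraj}.

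\textbf{Step 2 (Conditional law of large numbers).} Since $(Z^j)$ are i.i.d.~$\sim\bar m_0$ and independent of $W$, conditional on $\mathcal F_T$ the random variables $(\bar\gamma^{Z^j}_\cdot)_{j}$ are i.i.d., and the conditional marginal law of $\bar\gamma^{Z^j}_t$ coincides with the pushforward representation $\tilde m_t=\int_{\R^d}\delta_{\bar\gamma^x_t}\bar m_0(dx)$ established in the proof of Proposition~\ref{prop.ExistOptTraj}. The uniform Lipschitz estimate on $\tilde u$ provides a uniform bound $\|\dot{\bar\gamma}^{x}\|_\infty\le C$, giving equicontinuity in $t$ and second-moment control. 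A Fournier--Guillin type Wasserstein concentration bound, applied conditional on $\mathcal F_T$ and then integrated, therefore yields
\[
\epsilon_N := \sup_{1\le i\le N}\E\Big[\sup_{t\in[0,T]}{\bf d}_1\bigl(\tilde m^{N,i}_{\bar{\bf\gamma}_t},\tilde m_t\bigr)\Big]\xrightarrow[N\to\infty]{}0.
\]

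\textbf{Step 3 (Near-optimality and deviation bound).} By Remark~\ref{manos}, $\tilde F_t$ and $\tilde G$ are H\"older-$\tfrac{1}{d+2}$ continuous in the measure argument in sup-norm, so
\[
\bigl\|\tilde F_t(\cdot,\tilde m^{N,i}_{\bar{\bf\gamma}_t})-\tilde F_t(\cdot,\tilde m_t)\bigr\|_\infty + \bigl\|\tilde G(\cdot,\tilde m^{N,i}_{\bar{\bf\gamma}_T})-\tilde G(\cdot,\tilde m_T)\bigr\|_\infty \leq C\,{\bf d}_1^{1/(d+2)}.
\]
Combined with the optimality identity $\tilde u_0(Z^i)=\E\bigl[\int_0^T(\tilde L_t+\tilde F_t(\bar\gamma^{Z^i}_t,\tilde m_t))dt+\tilde G(\bar\gamma^{Z^i}_T,\tilde m_T)\mid Z^i\bigr]$ from \eqref{kejhsnrdt=fv}, this gives $|J^{N,i}(\bar\alpha^{Z^i},(\bar\alpha^{Z^j})_{j\neq i})-\E[\tilde u_0(Z^i)]|\le C\epsilon_N^{1/(d+2)}$. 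For the lower bound, if player $i$ deviates to an admissible $\alpha^i$, the empirical measure $\tilde m^{N,i}_{\tilde{\bf X}_t}$ is built from trajectories $(\bar\gamma^{Z^j})_{j\neq i}$ that do \emph{not} depend on $\alpha^i$ and equals $\tilde m^{N,i}_{\bar{\bf\gamma}_t}$. Replacing it by $\tilde m_t$ at cost $C\epsilon_N^{1/(d+2)}$, the resulting minimization of
\[
\E\Big[\int_0^T\bigl(\tilde L_t(\alpha^i_t,\tilde X^i_t)+\tilde F_t(\tilde X^i_t,\tilde m_t)\bigr)dt+\tilde G(\tilde X^i_T,\tilde m_T)\Big]
\]
over controls adapted to the enlarged filtration $(\mathcal F_t\vee\sigma(Z^1,\ldots,Z^N))_t$ is still bounded below by $\E[\tilde u_0(Z^i)]$: conditional on $Z^i$, the sigma-algebra $\sigma((Z^j)_{j\neq i})$ is independent of $(\mathcal F_t,\tilde m_\cdot)$ and a conditioning argument shows that enlargement by independent randomness cannot lower the value. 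Combining, $J^{N,i}(\alpha^i,(\bar\alpha^{Z^j})_{j\neq i})\geq J^{N,i}(\bar\alpha^{Z^i},(\bar\alpha^{Z^j})_{j\neq i})-2C\epsilon_N^{1/(d+2)}$, and choosing $N_0$ so that $2C\epsilon_N^{1/(d+2)}\le \varepsilon$ for $N\ge N_0$ yields the claim.

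\textbf{Main obstacle.} The crucial technical step is the quantitative sup-in-time conditional Wasserstein law of large numbers in Step~2. The trajectories $\bar\gamma^{Z^j}$ depend on $\omega$ through a highly nonlinear MFG flow and simultaneously through the i.i.d.~initial conditions $Z^j$; one must carefully disentangle these two sources of randomness by conditioning on $\mathcal F_T$ (which freezes the map $x\mapsto\bar\gamma^x$) and then apply a classical Wasserstein concentration estimate to the conditionally i.i.d.~sample, before upgrading pointwise-in-$t$ convergence to uniform-in-$t$ convergence via the $\omega$-independent Lipschitz-in-time bound on $\bar\gamma^x$. A secondary subtlety is the filtration-enlargement argument justifying that admissibility with respect to the $(Z^j)$-augmented filtration provides no strictly better value than $\tilde u_0(Z^i)$.
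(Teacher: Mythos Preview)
Your proof is correct and shares the same overall architecture as the paper's: translate to tilde coordinates, show by a conditional law of large numbers that the empirical measure of the other players' trajectories is close to $\tilde m_t$, replace one by the other in the cost, and invoke the MFG optimality \eqref{kejhsnrdt=fv}.

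The technical execution differs in a way worth noting. The paper does not exploit the \emph{uniform-in-$x$} H\"older continuity of $\tilde F,\tilde G$ in the measure argument (Remark~\ref{manos}); instead it obtains only a compact-set estimate \eqref{elzkrsjd} on balls $B_R$, which forces a two-case argument: if the deviating control has large $L^2$-norm, coercivity of $L$ alone makes the cost too big; if it is bounded, a Chebyshev tail estimate on $X^{\alpha^i}$ controls the probability of leaving $B_R$. Your use of Remark~\ref{manos} short-circuits this entirely, since $\|\tilde F_t(\cdot,m_1)-\tilde F_t(\cdot,m_2)\|_\infty\le C{\bf d}_1^{1/(d+2)}(m_1,m_2)$ holds on all of $\R^d$ and the replacement cost is controlled regardless of where $X^{\alpha^i}$ lives. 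This is a genuine simplification. Conversely, the paper's route would also work under assumptions on $F,G$ that are merely continuous in $m$ (not strongly monotone), so it is slightly more robust in that direction.

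You are also right to flag the filtration-enlargement issue: admissible controls in the $N$-player game are adapted to $\sigma((W_s)_{s\le t},(Z^j)_{j})$, whereas the value \eqref{kejhsnrdt=fv} is defined over $\mathcal A_{0,x}$ with $(\mathcal F_t)$ generated by $W$ alone. The paper passes over this point, but your conditioning argument (freeze $(Z^j)_j$, use independence from $W$ and from the coefficients $\tilde L,\tilde F,\tilde G,\tilde m$) is the standard and correct way to close it.
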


\begin{proof} To simplify the notation, in what follows we  set $\bar X^i= X^{\bar \alpha^{Z^i}}$ and note that 
$$
\bar X^i_t= Z^i+ \int_0^t \bar \alpha^{Z^i}_sds + \sqrt{2\beta}W_t = \bar \gamma^{Z^i}_t + \sqrt{2\beta}W_t. 
$$
We check that the conditional law of $(\bar X^i_t)_{t\in [0,T]}$ given $(W_t)_{t\in [0,T]}$ is $m_t= (Id+\sqrt{2\beta}W_t)\sharp \tilde m_t$. 
Indeed, in view of  Proposition II-2.7 of \cite{CaDeBook} and since $\bar \gamma^{Z^i}$ solves the ODE
$$
\dot{\bar \gamma}^{Z^i}_t=  -D_p\tilde H_t(D\tilde u_t(\bar \gamma^{Z^i}_t), \bar \gamma^{Z^i}_t),
$$
the conditional law $\tilde \mu_t$ of $(\bar \gamma^{Z^i}_t)$ given $W$ is a solution in the sense of distributions of  the continuity equation 
$$
\partial_t \tilde \mu_t -{\rm div}(\tilde \mu_t D_p\tilde H_t(D\tilde u_t(x),x))=0 \ \ \text{in} \ \ \R^d\times (0,T)  \ \ \  \ \tilde \mu_0=\bar m_0. 
$$
It follows from  Proposition \ref{prop.UnikCont} that this solution is unique.
Therefore,  the conditional law $\tilde \mu_t$ of $(\bar \gamma^{Z^i}_t)_{t\in [0,T]}$ given $W$ is $\tilde m_t$. Since $\bar X^i_t= \bar \gamma^{Z^i}_t + \sqrt{2\beta}W_t$, this implies that the conditional law of  $\bar X^i_t$ given $W$ is $m_t$. 
\smallskip

Since the $Z^i$'s   and $W$ are independent, it  is clear that the $\bar X^i$'s  are conditionally independent and have  the same law $m$ given $W$.  It then follows from the  Glivenko-Cantelli law of large numbers that, $\P-$a.s., 
$$
\lim_{N\to+\infty} \E\left[{\bf d}_2(m^{N,i}_{\bar{\bf X}_t}, m_t)\ |\ W \right] =0. 
$$
In view of the Lipschitz continuity and boundedness of $F$ and $G$, the limit above  implies that, for any $(x,t)$, 
$$
\lim_{N\to+\infty} \E\left[|F(x, m^{N,i}_{\bar{\bf X}_t})- F(x, m_t)|+|G(x,m^{N,i}_{\bar{\bf X}_T})- G(x, m_T)|  \right] =0.
$$
As the integrant is uniformly continuous in $x$ uniformly in $t$ and $N$ and has a uniform modulus in $t$, which in expectation is uniform in $x$ and $N$, we deduce  that, for any $R>0$ and $\ep>0$, there exists $N_R\in \N$ such that, if $N\geq N_R$,
\be\label{elzkrsjd}
\E\left[\sup_{x\in B_R, t\in [0,T]}|F(x, m^{N,i}_{\bar{\bf X}_t})- F(x, m_t)|+|G(x,m^{N,i}_{\bar{\bf X}_T})- G(x, m_T)|  \right] \leq \frac{\ep}{4(T+1)}.
\ee

Note that, since $D\tilde u$ is bounded, there exists   $M>0$ such  that $\bar \alpha^{Z^i}_t= -D_p\tilde H_t(D\tilde u_t(\bar \gamma^{Z^i}_t,t),\bar \gamma^{Z^i}_t)$ is bounded by $M$ and, thus,  we have 
\be\label{kuzajesrd}
\begin{cases}
J^{N,i}(\bar \alpha^{Z^i}, (\bar \alpha^{Z^j})_{j\neq i}) = \E\Big[ \int_0^T \frac{1}{2a_t(\bar X^i_t)}|\bar \alpha^{Z^i}_t+B(\bar X^i_t)|^2+ 
 F(\bar X^i_t, m^{N,i}_{\bar{\bf X}_t}) dt\\[2mm]
\hskip.9in  + G(\bar X^i_T, m^{N,i}_{\bar{\bf X}_T}) \Big] \leq T(C_0M^2+ C_0\|B\|_\infty^2+ \|F\|_\infty)+\|G\|_\infty.
\end{cases}
\ee
Let $\alpha^i$ be an admissible control for player $i$. To prove the claim, it is necessary to estimate $J^{N,i}(\alpha^i, (\bar \alpha^{Z^j})_{j\neq i})$ in terms  
$\E[\int_0^T |\alpha^i_s|^2ds]$.  
\smallskip

In what follows,  we introduce the constant 
$$
A=\max\Big[4TC_0(C_0M^2+2C_0\|B\|_\infty^2+2\|F\|_\infty)+8C_0\|G\|_\infty\; ,\;  \E[\int_0^T |\bar \alpha^{Z^i}_s|^2ds]\Big],
$$  
which is independent of $N$ and $i$, since the law of $\bar \alpha^{Z^i}_s$ does not depend on $i$. 
\smallskip

If $\E[\int_0^T |\alpha^i_s|^2ds]\geq A$, then, in view of \eqref{kuzajesrd} and the choice of $A$, we find 
\begin{align*}
J^{N,i}( \alpha^i, (\bar \alpha^{Z^j})_{j\neq i}) & = \E\left[ \int_0^T \frac{1}{2a_t(X^{\alpha^i}_t)}|\alpha^i_t+B( X^{\alpha^i}_t)|^2+ 
 F( X^{\alpha^i}_t, m^{N,i}_{\bar{\bf X}_t}) dt+ G( X^{\alpha^i}_T, m^{N,i}_{\bar{\bf X}_T}) \right] \\ 
 & \geq \frac{1}{4C_0}  \E\left[ \int_0^T |\alpha^i_t|^2dt \right]- TC_0 \|B\|_\infty^2- T\|F\|_\infty-\|G\|_\infty \\ 
 & \geq A/(4C_0) - TC_0 \|B\|_\infty^2- T\|F\|_\infty-\|G\|_\infty  \geq J^{N,i}(\bar \alpha^{Z^i}, (\bar \alpha^{Z^j})_{j\neq i}).
\end{align*}

If $\E[\int_0^T |\alpha^i_s|^2ds]\leq A$, an estimate which is satisfied by  $\bar \alpha^{Z^i}$, then,  for any $R>0$, we obtain 
\be\label{elzkrsjd2}
\begin{cases}
 \P\left[\underset{t\in [0,T]}\sup \; |X^{\alpha^i}_t| \geq R\right]  \\[2mm]
\leq  \P\left[|Z^i|\geq R/3 \right]
 + \P\left[\int_0^T |\alpha^i_t| dt\geq R/3 \right] + \P\left[\sqrt{2\beta} \underset{t\in [0,T]} \sup \; |W_t| \geq R/3 \right] \\[2mm]
 \leq 9R^{-2}\left(\E[|Z^i|^2]+ T\E\left[\int_0^T |\alpha^i_t|^2dt\right] +2\beta \E\left[ \underset{t\in [0,T]} \sup \; |W_t|^2\right]\right) \leq CR^{-2}(1+A).
\end{cases}
\ee
\smallskip

We fix $R$ large enough to be chosen below and $N\geq N_R$ as in \eqref{elzkrsjd}. Then 
\begin{align*}
& J^{N,i}( \alpha^i, (\bar \alpha^{Z^j})_{j\neq i})  = \E\left[ \int_0^T \frac{1}{2a(X^{\alpha^i}_t)}|\alpha^i_t+B( X^{\alpha^i}_t)|^2+ 
 F( X^{\alpha^i}_t, m^{N,i}_{\bar{\bf X}_t}) dt+ G( X^{\alpha^i}_T, m^{N,i}_{\bar{\bf X}_T}) \right] \\ 
 & \qquad \geq \E\left[ \int_0^T  \frac{1}{2a(X^{\alpha^i}_t)}|\alpha^i_t+B( X^{\alpha^i}_t)|^2+ 
 F( X^{\alpha^i}_t, m_t) dt+ G( X^{\alpha^i}_T, m_T) \right]  \\ 
 & \qquad\qquad - \E\left[ T\sup_t|F( X^{\alpha^i}_t, m^{N,i}_{\bar{\bf X}_t})-F( X^{\alpha^i}_t, m_t)| +|G( X^{\alpha^i}_T, m^{N,i}_{\bar{\bf X}_T})-G( X^{\alpha^i}_T, m_T)|\right],
 \end{align*}
where, in view of  \eqref{elzkrsjd} and \eqref{elzkrsjd2} and $R$ suffciently large,  
\begin{align*}
&\E\left[ T\underset{t\in [0,T]}\sup |F( X^{\alpha^i}_t, m^{N,i}_{\bar{\bf X}_t})-F( X^{\alpha^i}_t, m_t)| +|G( X^{\alpha^i}_T, m^{N,i}_{\bar{\bf X}_T})-G( X^{\alpha^i}_T, m_T)|\right]\\
& \qquad \leq (T+1) \E\left[ \sup_{x\in B_R, t\in [0,T]}|F(x, m^{N,i}_{\bar{\bf X}_t})- F(x, m_t)|+|G(x,m^{N,i}_{\bar{\bf X}_T})- G(x, m_T)|\right]
  \\
& \qquad\qquad +2(T+1)(\|F\|_\infty+\|G\|_\infty) \P\Big[ \sup_{t\in [0,T]}  |X^\alpha_t|\geq R\Big] \leq \dfrac{\ep}{4} +  \dfrac{C}{R^2}(1+A)<\dfrac{\ep}{2}.
 \end{align*}

 \smallskip
 
It follows that 
$$
 J^{N,i}( \alpha^i, (\bar \alpha^{Z^j})_{j\neq i})   \geq \E\left[ \int_0^T  \frac{1}{2a(X^{\alpha^i}_t)}|\alpha^i_t+B( X^{\alpha^i}_t)|^2+ 
 F( X^{\alpha^i}_t, m_t) dt+ G( X^{\alpha^i}_T, m_T) \right] -\dfrac{\ep}{2}.
$$
Therefore, setting $\gamma^{\alpha^i}_t:= X^{\alpha^i}_t-\sqrt{2\beta}W_t$, we get 
$$
 J^{N,i}( \alpha^i, (\bar \alpha^{Z^j})_{j\neq i})   \geq \E\left[\int_0^T  \tilde L(\alpha^i_t, \gamma^{\alpha^i}_t)+ 
\tilde F( \gamma^{\alpha^i}_t, \tilde m_t) dt+ \tilde G( \gamma^{\alpha^i}_T, \tilde m_T) \right] -\dfrac{\ep}{2}.
$$

 The same argument, with an estimate from above instead of an estimate from below, shows that 
$$
J^{N,i}(\bar  \alpha^{Z^i}, (\bar \alpha^{Z^j})_{j\neq i}) \leq  \E\left[ \int_0^T \tilde L(\bar \alpha^{Z^i}_t, \bar \gamma^{Z^i}_t)+ 
\tilde F( \bar \gamma^{Z^i}_t, \tilde m_t) dt+ \tilde G( \bar \gamma^{Z^i}_T,\tilde m_T) \right] +\dfrac{\ep}{2} .
$$
 Since, in view of   the optimality of $\bar \alpha^{Z^i}$ in \eqref{kejhsnrdt=fv}, we also have
 \begin{align*}
&    \E\left[\int_0^T  \tilde L(\alpha^i_t, \gamma^{\alpha^i}_t)+ 
\tilde F( \gamma^{\alpha^i}_t, \tilde m_t) dt+ \tilde G( \gamma^{\alpha^i}_T, \tilde m_T) \right] \\
&\qquad  \geq 
  \E\left[ \int_0^T \tilde L(\bar \alpha^{Z^i}_t, \bar \gamma^{Z^i}_t)+ 
\tilde F( \bar \gamma^{Z^i}_t, \tilde m_t) dt+ \tilde G( \bar \gamma^{Z^i}_T,\tilde m_T) \right],
 \end{align*}
combining  the three last inequalities  we conclude that $(\bar \alpha^{Z^{i}})_{i=1, \dots, N}$ is an $\epsilon-$Nash equilibrium.  
\end{proof}


\appendix 
\section{A uniqueness result for a continuity equation}

We study here the uniqueness of distributional solutions of  the forward continuity equation
\be\label{eq.CEappen}
\partial_t m +{\rm div}(mb)= 0 \ \ \text{in} \ \ \R^d\times (0,T) \ \ \  m(0)=\bar m_0 \ \ \text{in} \ \ \R^d,
\ee
where $\bar m_0$ is a Borel probability measure on $\R^d$ with a bounded density and $b:[0,T]\times \R^d\to \R^d$ is a bounded, half-Lipschitz from below Borel vector field, that is, there is a constant $C_0$ such that, for all $x,y \in \R^d$ and $t\in [0,T]$,
\be\label{takis40}
|b_t(x)|\leq C_0 \ \  \text{and} \ \  
(b(t,x)-b(t,y))\cdot(x-y) \geq -C_0|x-y|^2.
\ee
The existence and uniqueness of distributional solution of \eqref{eq.CEappen} is closely related to the existence and uniqueness of solutions of the ODE
\be\label{takis41}
\dot x_t= b_t(x_t)  \ \ \text{in} \ \ [0,t_0], \ \ \ x(t_0)=x_0.
\ee
The following result is a variant of a theorem in \cite{BeLaLi20}: 
\begin{prop}\label{prop.appen} Assume \eqref{takis40} and let $\bar m_0\in \mathcal P_1(\R^d)$ be absolutely continuous with  bounded density. Then there exists a unique bounded and absolutely continuous with respect to the Lebesgue measure solution of \eqref{eq.CEappen}. 
\end{prop}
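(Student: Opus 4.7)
The plan is to prove existence by a standard mollification and compactness argument, and uniqueness by adapting the duality method of \cite{BoJaMa} after a time reversal to match the orientation of the one-sided Lipschitz condition.

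For existence, I would set $b^\varepsilon := b *_x \rho^\varepsilon$ and $\bar m_0^\varepsilon := \bar m_0 * \rho^\varepsilon$ with a standard mollifier. Convexity of $\rho^\varepsilon$ implies that $b^\varepsilon$ still obeys \eqref{takis40} with the same constant $C_0$, so in particular the distributional divergence satisfies $\operatorname{div}(b^\varepsilon) \geq -d C_0$ almost everywhere (take $y = x \pm \delta e_i$ in the one-sided Lipschitz inequality, sum over $i$, and let $\delta \to 0$). Letting $\Phi^\varepsilon_t$ denote the smooth forward flow of $b^\varepsilon$ and setting $m^\varepsilon_t := (\Phi^\varepsilon_t)_\# \bar m_0^\varepsilon$, the Liouville formula combined with the divergence lower bound gives the uniform $L^\infty$ bound $\|m^\varepsilon_t\|_\infty \leq e^{dC_0 T}\|\bar m_0\|_\infty$. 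Weak-$*$ compactness together with $b^\varepsilon \to b$ in $L^1_{\mathrm{loc}}$ then produces a bounded distributional solution in the limit.

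For uniqueness, linearity reduces matters to showing that if $\bar m_0 \equiv 0$ and $m$ is a bounded distributional solution, then $m \equiv 0$. Fix $T_0 \in (0, T]$ and introduce the time-reversed vector field $\tilde b(s, y) := -b(T_0 - s, y)$, which satisfies the opposite one-sided bound $(\tilde b(s, x) - \tilde b(s, y))\cdot(x - y) \leq C_0|x-y|^2$. By Filippov's theorem, the Cauchy problem $\dot Y_s = \tilde b(s, Y_s)$, $Y_0 = y$, has a unique continuous flow $\Psi_s$, and this flow is $e^{C_0 s}$-Lipschitz. For each $g \in C^\infty_c(\R^d)$, the natural dual function $\varphi(t, x) := g(\Psi_{T_0 - t}(x))$ satisfies $\varphi(T_0, \cdot) = g$ and is Lipschitz in $x$, continuous in $t$. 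Following \cite{BoJaMa}, $\varphi$ is a distributional solution of the backward transport equation $\partial_t \varphi + b \cdot \nabla \varphi = 0$ on $(0, T_0)$. The duality identity
\[ \int_{\R^d} m(T_0, x)\, g(x)\, dx \;=\; \int_{\R^d} \bar m_0(x)\,\varphi(0, x)\, dx \;=\; 0, \]
holding for all $g$, then forces $m(T_0) \equiv 0$, and since $T_0 \in (0, T]$ is arbitrary, $m \equiv 0$.

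The principal obstacle is to make the duality identity rigorous: $\varphi$ is only Lipschitz in $x$, $b$ is only bounded and measurable, and the product $b \cdot \nabla \varphi$ requires careful interpretation. I would address this exactly as in \cite{BoJaMa} by regularizing $\varphi$ in space through $\varphi^\eta := \varphi *_x \rho^\eta$, testing the equation for $m$ against $\varphi^\eta$, and showing that the resulting commutator $m\, b \cdot (\nabla \varphi^\eta - (\nabla \varphi)^\eta)$ vanishes in $L^1_{\mathrm{loc}}$ as $\eta \to 0$. The boundedness of $b$, the uniform Lipschitz bound on $\varphi$ coming from the $e^{C_0 s}$-Lipschitz constant of $\Psi_s$, and the $L^\infty$-bound on $m$ are precisely what is needed for this commutator estimate, which is the technical heart of the argument.
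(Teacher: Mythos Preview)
Your existence argument is fine, but the uniqueness argument contains a genuine error that is not easily repaired. The function $\varphi(t,x)=g(\Psi_{T_0-t}(x))$ does \emph{not} solve $\partial_t\varphi+b\cdot\nabla\varphi=0$. A one-line check with constant $b$: then $\Psi_s(x)=x-bs$, so $\varphi(t,x)=g(x-b(T_0-t))$, and $\partial_t\varphi+b\cdot\nabla\varphi=2\,b\cdot Dg$. In general, composition with the \emph{forward} flow of $\tilde b$ produces a solution of $\partial_t\varphi-b\cdot\nabla\varphi=0$, which is useless for duality against $\partial_t m+\operatorname{div}(mb)=0$. The transport equation you actually need, $\partial_t\varphi+b\cdot\nabla\varphi=0$ with \emph{terminal} data $g$ at $T_0$, is solved by following characteristics \emph{forward} in $b$ from $(t,x)$ to $T_0$ --- and that is precisely the direction in which the one-sided Lipschitz bound gives no uniqueness. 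Equivalently, you would need $\Psi_s^{-1}$, and the forward flow of an upper-OSL field can collapse (take $\tilde b(y)=-\operatorname{sign}(y)$ in one dimension). So the Lipschitz dual test function you want simply does not exist in general. (As a minor aside, the commutator you wrote, $mb\cdot(\nabla\varphi^\eta-(\nabla\varphi)^\eta)$, vanishes identically since convolution commutes with $\nabla$; the relevant object is $(b\cdot\nabla\varphi)^\eta-b\cdot(\nabla\varphi)^\eta$.)

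One could try the forward transport problem $\partial_t\varphi+b\cdot\nabla\varphi=0$, $\varphi(0)=g$, which \emph{is} solved by $\varphi(t,x)=g(X(x,t,0))$ with $X$ the well-defined backward $b$-flow; duality then gives $\int m(T_0)\,\varphi(T_0,\cdot)=0$. But $\varphi(T_0,\cdot)$ ranges only over functions constant on the fibers of $X(\cdot,T_0,0)$, which is not enough to conclude $m(T_0)=0$ without further information on those fibers. The paper sidesteps the duality route entirely: it invokes Ambrosio's superposition principle to write any bounded solution as $m_t=e_t\sharp\eta$ with $\eta$ concentrated on Filippov solutions of $\dot\gamma=b$, and then proves --- through a sequence of lemmas showing that the backward flow is surjective with almost-everywhere singleton preimages --- that for a.e.\ initial point the forward Filippov solution is unique. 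Disintegrating $\eta$ over $\bar m_0$ then forces each fiber measure to be a Dirac, and uniqueness follows. The actual argument in \cite{BoJaMa} that the paper alludes to is likewise considerably more delicate than a direct duality with a Lipschitz test function; it passes through their notion of reversible solution.
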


The proof requires several steps and is based on the Filippov regularization $b^F$ of $b$; see Filippov \cite{Fi}. Recall that $b^F$ is an upper semicontinuous set-valued map with convex compact values. It follows from \eqref{takis40}, that,
if $\mathcal N=\{(x,t):  \{b_t(x)\}\neq b^F_t(x)\}$, then
\be\label{takis42}
\mathcal L^{d+1}(\mathcal N) =0. 
\ee
 

It is known that, in view of \eqref{takis40},  for any $(x_0,t_0)\in \R^d\times (0,T]$, there exists a unique absolutely continuous solution $X(x_0,t_0,\cdot)$ of the backward differential inclusion 
$$
\dot x_t\in b^F_t(x_t) \ \ \text{in} \ \ [0,t_0], \ \ \ x_{t_0}=x_0,
$$
which is referred to as the  Filippov solution of  the ODE, and, for all  $x_0, x_1\in \R^d$ and $t\in [0,t_0]$, 
\be\label{takis200}
|X(x_0,t_0,t)-X(x_1,t_0,t)| \leq  e^{C_0T} |x_0-x_1|.  
\ee

Note that any smooth approximation $b^\ep$ of $b$ obtained by, for example, a convolution with a nonnegative, smooth kernel, satisfies \eqref{takis40} with the same constant $C_0$. 
\smallskip

It then follows that the classical backward flow $X^\ep(x_0,t_0,\cdot)$  of 
$$
\dot x^\ep_t =b^\ep_t (x^\ep_t)  \ \ \text{in} \ \ [0,t_0], \ \ x^\ep_{t_0}=x,
$$
satisfies \eqref{takis200} with a uniform constant,  hence,  it converges locally uniformly to $X$.

%

\begin{lem}\label{lem.inject} Assume \eqref{takis40}. Then, for any $0\leq t\leq t_0\leq T$,  the map $x\to X(x, t_0,t)$ is surjective. 
\end{lem}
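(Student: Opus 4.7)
The plan is to prove surjectivity by a compactness argument based on the smooth approximating flows $X^\ep$. The case $t = t_0$ is trivial (take $x = y$), so fix $0 \leq t < t_0 \leq T$ and an arbitrary target $y \in \R^d$; I must exhibit $x_0$ with $X(x_0, t_0, t) = y$.

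First I would invert the approximating flows. For each $\ep > 0$, the smoothed field $b^\ep$ is globally Lipschitz, so the forward ODE $\dot Y_s = b^\ep_s(Y_s)$ on $[t, t_0]$ with $Y_t = y$ admits a unique global solution $Y^\ep$. Set
$$
x^\ep := Y^\ep_{t_0}.
$$
By uniqueness of the smooth flow, $X^\ep(x^\ep, t_0, s) = Y^\ep_s$ for all $s \in [t, t_0]$, and in particular $X^\ep(x^\ep, t_0, t) = y$. Because $\|b^\ep\|_\infty \leq C_0$ uniformly in $\ep$, one has
$$
|x^\ep - y| = \Bigl|\int_t^{t_0} b^\ep_s(Y^\ep_s)\, ds\Bigr| \leq C_0(t_0 - t),
$$
so $\{x^\ep\}_{\ep > 0}$ is bounded and we may extract a subsequence $x^{\ep_k} \to x_0$ for some $x_0 \in \R^d$ with $|x_0 - y| \leq C_0(t_0 - t)$.

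It remains to pass to the limit in $X^{\ep_k}(x^{\ep_k}, t_0, t) = y$. Using the uniform Lipschitz estimate \eqref{takis200} (valid for $X^\ep$ with the same constant by the discussion preceding the lemma) and the already established locally uniform convergence $X^\ep(\cdot, t_0, \cdot) \to X(\cdot, t_0, \cdot)$, I would bound
$$
|X^{\ep_k}(x^{\ep_k}, t_0, t) - X(x_0, t_0, t)| \leq e^{C_0 T} |x^{\ep_k} - x_0| + |X^{\ep_k}(x_0, t_0, t) - X(x_0, t_0, t)|,
$$
and both terms tend to zero as $k \to \infty$. Hence $X(x_0, t_0, t) = y$, proving surjectivity.

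The only genuine subtlety is confirming that the Lipschitz estimate \eqref{takis200} holds uniformly in $\ep$ for the mollified backward flow; this is standard because the one-sided Lipschitz constant $C_0$ in \eqref{takis40} is preserved under convolution with a nonnegative kernel, so Gr\"onwall's inequality applied to $|X^\ep(x_0,t_0,\cdot) - X^\ep(x_1,t_0,\cdot)|^2$ gives the required bound with a constant independent of $\ep$. Everything else is compactness and continuous dependence, already at hand from the paper's setup.
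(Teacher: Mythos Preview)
Your proof is correct and takes a genuinely different, more elementary route than the paper. The paper argues by contradiction: assuming some $y$ is missed, it places a smooth probability density in a ball $B_\delta(y)$, solves the continuity equation forward from time $t$ with the mollified fields $b^\ep$, passes to the limit to obtain a bounded absolutely continuous solution $m$, and then invokes Ambrosio's superposition principle to represent $m$ via ODE trajectories; since $m$ has a bounded density, $\eta$-a.e.\ trajectory avoids the null set $\mathcal N$ and is therefore a Filippov solution, hence (by backward uniqueness) of the form $s\mapsto X(\gamma_{t_0},t_0,s)$, contradicting that $B_\delta(y)$ misses the range. Your argument instead inverts the smooth approximating flows directly and passes to the limit by compactness plus the already-established uniform backward Lipschitz estimate \eqref{takis200} and the locally uniform convergence $X^\ep\to X$ --- no continuity equation, no superposition. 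The paper's heavier route does have a side benefit: the step showing that superposition trajectories of a bounded solution are automatically Filippov solutions is explicitly recycled later in the proof of Proposition~\ref{prop.appen}. Your route is shorter and self-contained for this lemma.
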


\begin{proof} Arguing  by contradiction, we assume that,  for some $t\in [0,t_0]$, there exists $y\in \R^d\backslash X(\R^d,t_0,t)$. The finite speed of propagation property and the continuity  the flow yield some $\delta>0$ such that $B_\delta(y)\cap X(\R^d,t_0,t)=\emptyset$.
\smallskip

Let $\bar m_0$ be a probability measure with a smooth density supported in $B_\delta(y)$ and $b^\ep$  a smooth approximation of $b$ satisfying \eqref{takis40} with a constant independent of $\ep$. Note that, for any $0\leq t\leq t_0\leq T$, the map $x\to X^\ep(x,t_0,t)$ is smooth and one-to-one.  
\smallskip

We consider the classical solution $m^\ep$ to the continuity equation 
$$
\partial_t m^\ep +{\rm div}(m^\ep b^\ep)=0 \ \ \text{in}  \ \ \R^d\times  (t,T)  \ \ \  m^\ep(t)=\bar m_0.
$$
Since ${\rm div}(b^\ep)\geq -C_0$, we infer from  the maximum principle that $$\|m^\ep\|_\infty\leq \|\bar m_0\|_\infty e^{C_0 T}.$$ 

Passing  (up to a subsequence) to the $\ep\to0$ limit, we obtain  $m\in L^\infty(\R^d\times (0,T))$  solving 
\be\label{eq.pierre}
\partial_t m +{\rm div}(mb)= 0 \ \ \text{in} \ \ \R^d\times (t,T) \ \ \  m(t)=\bar m_0 \ \ \text{in} \ \ \R^d,
\ee
in the sense of distributions.  
\smallskip

Next, we use Ambrosio's superposition principle \cite{Am04} which provides a connection between solutions of the continuity equation  \eqref{eq.pierre} and the ODE \eqref{takis41} in the form of  a Borel measure $\eta$ on $\Gamma=C([t,t_0], \R^d)$ which is concentrated on solutions of \eqref{takis41} and is such that $m(s)=e_s\sharp \eta$ for $s\in [t,T]$. 
\vskip.075in

We claim that,  for $\eta-$a.e. $\gamma\in \Gamma$, $\gamma$ is a Filippov solution of  \eqref{takis41}. Indeed, it follows from  \eqref{takis42} that,
since $m$ is absolutely continuous with respect to the Lebesgue measure, 
$$
0= \int_t^T \int_{\R^d} {\bf 1}_{\mathcal N}(x,s) m(x,s)dxds  =  \int_{\Gamma} \int_t^T{\bf 1}_{\mathcal N}(\gamma(s),s) ds\eta(d\gamma). 
$$
Hence,  for $\eta-$a.e. $\gamma\in \Gamma$,  $(\gamma_s,s)\notin \mathcal N$ for a.e. $s\in [t,T]$ and, therefore, for a.e. $s\in [t,T]$,
$$
\dot \gamma_s= b_t(\gamma_s) \in  b^F_s(\gamma_s).
$$
Thus, for $\eta-$a.e. $\gamma\in \Gamma$, $\gamma$ is a Filippov solution of \eqref{takis41}, and, in view of the  uniqueness of the backward solution, $\gamma(s)= X(\gamma(t_0),t_0,s)$. Since $\gamma(t)$ belongs to the support of $\bar m_0$ which is contained in $B_\delta(y)$, this leads to a contradiction. 
\end{proof}

\begin{lem}\label{remlem.1}  Assume \eqref{takis40}. Then, for any $s,t\in [0,T]$ with $s<t$, there exists a set $E_{s,t}$ of full $\mathcal L^d-$measure  on which $X^{-1}(\cdot, t,s)$ is a singleton.  
\end{lem}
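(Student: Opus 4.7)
The plan is to exploit the smooth regularizations of $b$, for which the corresponding backward flows are diffeomorphisms with uniformly bounded Jacobians, and then pass to the limit via the area formula. Set $T := X(\cdot,t,s)$ and $T^\epsilon := X^\epsilon(\cdot,t,s)$; by the hypotheses, each $T^\epsilon$ is a smooth diffeomorphism, $T^\epsilon \to T$ locally uniformly, and the sequence is equi-Lipschitz.

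First, since the half-Lipschitz assumption \eqref{takis40} transfers to $b^\epsilon$ (via convolution), the symmetric part of $Db^\epsilon$ is bounded below by $-C_0 I$, whence ${\rm div}(b^\epsilon)\geq -C_0 d$. Liouville's formula applied to the time-reversed flow then gives the one-sided bound $|\det DT^\epsilon| = \det DT^\epsilon \leq M := e^{C_0 d(t-s)}$, and the change of variables in the diffeomorphism $T^\epsilon$ yields
\begin{equation}\label{eq:push}
T^\epsilon \sharp \bigl(|\det DT^\epsilon|\,\mathcal L^d\bigr) = \mathcal L^d.
\end{equation}

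Next I would pass to the weak-$*$ limit in \eqref{eq:push}. Extract a subsequence so that $|\det DT^{\epsilon_n}|\rightharpoonup \rho$ in $L^\infty(\R^d)$ with $0 \leq \rho \leq M$; for any $\phi\in C_c(\R^d)$, the uniform convergence of $\phi\circ T^{\epsilon_n}$ on compacts combined with this weak-$*$ convergence produces
\[
\int \phi(T(x))\,\rho(x)\,dx = \int \phi(y)\,dy.
\]
To identify $\rho$ with $|\det DT|$ I would invoke the weak continuity of the Jacobian determinant for equi-Lipschitz sequences (Ball's null-Lagrangian identity): since $T^{\epsilon_n}\rightharpoonup T$ in $W^{1,p}_{\rm loc}$ weakly for every finite $p$, one obtains $\det DT^{\epsilon_n}\rightharpoonup \det DT$ in the distributional sense, and positivity $\det DT^\epsilon>0$ together with the $L^\infty$-bound identifies $\rho = |\det DT|$ a.e.

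Finally, the resulting identity $T \sharp (|\det DT|\,\mathcal L^d) = \mathcal L^d$ is to be compared with the area formula for the Lipschitz map $T$,
\[
\int \phi(T(x))\,|\det DT(x)|\,dx = \int \phi(y)\,N(y)\,dy, \qquad N(y) := \#\{x \in \R^d : T(x) = y\},
\]
which forces $N(y)=1$ for $\mathcal L^d$-a.e.\ $y$. Combined with Lemma \ref{lem.inject} (which ensures $N\geq 1$ everywhere), the full-measure set $E_{s,t} := \{y : N(y)=1\}$ is the one required. The main obstacle is the identification $\rho = |\det DT|$, which relies on Ball's weak continuity of Jacobians applied to equi-Lipschitz sequences; the remaining ingredients (Liouville's formula, change of variables, and the area formula for Lipschitz maps) are classical.
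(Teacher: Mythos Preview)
Your argument is correct and complete; the identification $\rho=|\det DT|$ via Ball's weak continuity of Jacobians is legitimate here because the sequence is equi-Lipschitz, hence bounded in every $W^{1,p}_{\rm loc}$, and the positivity of $\det DT^{\varepsilon}$ passes to the weak limit. The comparison of the push-forward identity with the area formula then forces $N\equiv 1$ a.e., which is exactly the claim. (The appeal to Lemma~\ref{lem.inject} at the end is harmless but redundant: $N=1$ a.e.\ already contains $N\geq 1$ a.e.)

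The paper, however, proceeds quite differently. Instead of tracking the Jacobian through the approximation, it proves a purely topological fact: for \emph{every} $y$, the fiber $T^{-1}(\{y\})$ is connected. This is obtained by noting that $(T^{\varepsilon})^{-1}(B_r(y))$ is connected (as the diffeomorphic image of a ball), that for small $\varepsilon$ it lies in any prescribed open neighborhood of $T^{-1}(\{y\})$, and that a limit of connected sets trapped in one component of a disconnection cannot meet the other. The area formula is then used only to say that a Lipschitz map has at most countable fibers for a.e.\ target point; a connected, nonempty, countable set must be a singleton. Your route is more quantitative and dispatches the problem in one stroke via the Jacobian, at the cost of invoking the weak continuity of determinants; the paper's route is more elementary in its tools (no null-Lagrangian machinery) and yields the stronger qualitative statement that \emph{every} fiber is connected, which may be of independent interest even where the Jacobian degenerates.
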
 

\begin{proof} The main step of the proof is the fact that, for  any $x\in \R^d$,  the set $X^{-1}(\{x\},s,t)$, which, in view of Lemma~\ref{lem.inject},  is nonempty,   is connected. 
\smallskip

Since $X^{-1}(\{x\},s,t)$ is compact, it suffices to show that, if  $O_1$ and $O_2$ are two open subsets of $\R^d$ such that $X^{-1}(\{x\},s,t) \subset O_1\cup O_2$ and  $\overline O_1\cap \overline O_2=\emptyset$, then $X^{-1}(\{x\},s,t)$ is contained either  in $O_1$ or in $O_2$. 
\smallskip

Let $O_1$ and $O_2$ be as above. 
The upper-semicontinuity of $X^{-1}(\cdot, s,t)$, which is a consequence of   the stability of the flow, yields some  $r>0$ such that $X^{-1}(B_r(x),s,t) \subset O_1\cup O_2$. 
\smallskip

Let $b^\ep$ and $X^\ep$ be respectively a smooth approximation of $b$ and the associated group of solution. Then  
$(X^\ep)^{-1}(B_r(x),s,t)= X^\ep(B_r(x),t,s)$ is connected and, for $\ep$ small enough, is contained in  $O_1\cup O_2$. Thus, it is contained either in $O_1$ or in $O_2$. Without loss of generality, we can assume that there exists $\ep_n\to0$ such that $(X^{\ep_n})^{-1}(B_r(x),s,t) \subset O_1$. 
\smallskip

Passing to the limit up to this subsequence, we infer that 
$$
X^{-1}(\{x\},s,t)  \subset \bigcap_{r>0} {\rm limsup} (X^{\ep_n})^{-1}(B_r(x),s,t) \subset \overline{O_1}. 
$$
Since $\overline O_1\cap \overline O_2=\emptyset$  and $X^{-1}(\{x\},s,t) \subset O_1\cup O_2$, this implies that $X^{-1}(\{x\},s,t)$ is contained in $O_1$, and, hence,   $X^{-1}(\{x\},s,t)$ is connected. 

\smallskip

The fact that  $x\to X(x,s,t)$ is Lipschitz continuous and the area formula imply that $X^{-1}(\{x\},s,t)$ is at most countable for a.e. $x\in \R^d$. Then the fact that $X^{-1}(\{x\},s,t)$  is nonempty and connected implies that, as soon it is countable,   $X^{-1}(\{x\},s,t)$ must be a singleton for a.e. $x\in \R^d$. 
\end{proof}

The next Lemma is about the existence and uniqueness of a forward solution of the Fillipov ODE.

\begin{lem}\label{lem.kensf} Assume \eqref{takis40}. Then,  there exists $E \subset R^d$of full $\mathcal L^d-$measure such that, for any $x\in E$, there exists a unique forward maximal absolutely continuous solution of  $\dot x_t\in b^F_t(x_t)$ on $[0,T]$ with $x_0=x$. 
\end{lem}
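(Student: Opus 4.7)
\textbf{Proof plan for Lemma \ref{lem.kensf}.} My plan is to obtain existence from standard Filippov theory and reduce the forward uniqueness question to the backward uniqueness already built into the definition of the flow $X$, by exploiting Lemma \ref{remlem.1} at a countable dense set of times.

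\emph{Existence.} For any $x_0\in \R^d$, I would approximate $b$ by a smooth mollification $b^\ep$ which, as noted in the paragraph preceding Lemma \ref{lem.inject}, still satisfies \eqref{takis40} with the same constant $C_0$. The classical forward solutions $X^\ep(x_0,0,\cdot)$ are uniformly $C_0$-Lipschitz in $t$, so Arzel\`a-Ascoli yields a subsequential limit $\gamma\in C([0,T],\R^d)$ with $\gamma_0=x_0$. Upper semicontinuity of $b^F$ and convexity of its values (combined with Mazur's lemma applied to the weak-$L^1$ limit of $\dot{X}^\ep$) then imply that $\dot\gamma_t\in b^F_t(\gamma_t)$ for a.e.\ $t$; boundedness of $b$ shows that $\gamma$ extends to all of $[0,T]$, hence is maximal. (Alternatively, one invokes directly the classical existence theorem of Filippov for differential inclusions with upper semicontinuous, convex, compact right-hand side.)

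\emph{Uniqueness on a full measure set.} Fix a countable dense sequence $\{t_n\}_{n\ge 1}$ in $(0,T]$ and, for each $n$, let $E_{0,t_n}$ be the full $\mathcal L^d$-measure set provided by Lemma \ref{remlem.1}, on which $X^{-1}(\cdot,t_n,0)$ is a singleton. Set
\[
E:=\bigcap_{n\ge 1} E_{0,t_n},
\]
which still has full Lebesgue measure. Let $x_0\in E$ and suppose $\gamma_1,\gamma_2:[0,T]\to\R^d$ are two forward Filippov solutions of $\dot x_t\in b^F_t(x_t)$ with $\gamma_1(0)=\gamma_2(0)=x_0$. For each $n$, write $y^i_n:=\gamma_i(t_n)$. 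Since a solution of the inclusion is simultaneously a forward and a backward solution, the restriction $\gamma_i|_{[0,t_n]}$ is a backward Filippov solution with terminal value $y^i_n$ at time $t_n$. The backward uniqueness (which follows from the one-sided Lipschitz condition \eqref{takis40}, applied to the time-reversed equation, and is already recorded in the statement \eqref{takis200} as a Gr\"onwall-type contraction) gives $\gamma_i(s)=X(y^i_n,t_n,s)$ for $s\in[0,t_n]$. Evaluating at $s=0$ we find $y^1_n,y^2_n\in X^{-1}(\{x_0\},t_n,0)$, and since $x_0\in E_{0,t_n}$ this preimage is a singleton: $y^1_n=y^2_n$. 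Continuity of $\gamma_1,\gamma_2$ together with the density of $\{t_n\}$ in $(0,T]$ then forces $\gamma_1\equiv\gamma_2$ on $[0,T]$.

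\emph{Main obstacle.} The only delicate point is the passage from ``two forward Filippov solutions starting at $x_0$'' to ``two backward Filippov solutions reaching $x_0$ at time $0$'' — one must check that a forward Filippov trajectory, read backwards from any later time, really coincides with the backward flow $X(\cdot,t_n,\cdot)$ constructed after \eqref{takis42}. This rests on the fact that $b^F$ is direction-free, so the same inclusion governs both directions, and on the one-sided Lipschitz estimate, which controls the divergence of backward trajectories exponentially via Gr\"onwall. Once this identification is made, the bridge to Lemma \ref{remlem.1} is immediate and the argument closes as above.
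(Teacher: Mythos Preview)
Your proposal is correct and follows essentially the same route as the paper: define $E=\bigcap_n E_{0,t_n}$ for a countable dense set of times, use backward uniqueness to identify any forward Filippov solution with $X(\cdot,t_n,\cdot)$ on $[0,t_n]$, and conclude via Lemma~\ref{remlem.1} that two forward solutions agree at each $t_n$, hence everywhere by density. The only cosmetic difference is in the existence step: the paper includes $T$ in the dense set, picks the unique $y\in X^{-1}(\{x\},T,0)$, and exhibits $t\mapsto X(y,T,t)$ directly as the forward solution, whereas you invoke Filippov's existence theorem (or a compactness argument); both are fine.
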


\begin{proof}  Let $(t_n)_{n\in N}$ be a countable  and dense set  of times in $[0,T]$ with $t_0=T$ and $E= \bigcap_n E_{0,t_n}$, where $E_{s,t}$ is given in Lemma \ref{remlem.1}.  Note that $E$ had a full  $\mathcal L^d-$measure in $\R^d$. Then,  for any $x\in E$, there exists a unique $y\in \R^d$ such that $X(y,T,0)=x$. 
\smallskip

We claim that $t\to X(y,T,t)$ is the claimed unique forward maximal solution. Indeed, by definition, it is a maximal solution. Assume that $x:[0,t^*)\to \R^d$ is another maximal solution defined on an interval $[0,t^*)$ with $t^*\in (0,T]$ and $x_0=x$. Note that, the backward uniqueness of the flow, implies that  $x_s= X(x_t, t,s)$ for any $0\leq s<t< t^*$. 
\smallskip

Let $t_n\in (0,t^*)$. Then $t\to X(y,T,t)$ is a solution on $[0,t_n]$ starting from $X(y,T,t_n)$. Therefore $x_{t_n}$ and $X(y,T,t_n)$ belong to $X^{-1}(\{x\}, t_n,0)$, which is a singleton by the definition of $E_{0,t_n}$. It follows that 
$x_{t_n}= X(y,T,t_n)$. So $x_t= X(y,T,t)$ on $[0,t^*)$, and, hence, the uniqueness.
\end{proof}

We are now in a position to complete the proof of the existence and uniqueness of bounded and absolutely continuous distributional  solutions of the continuity equation.

\begin{proof}[Proof of Proposition \ref{prop.appen}] Since the existence of a bounded solution of the continuity equation can be achieved by standard approximation, we   concentrate only on the uniqueness. 
\vskip.075in

Let $m$ be an absolutely continuous solution to the continuity equation with initial condition $\bar m_0$. 
The Ambrosio superposition Theorem \cite{Am04}  yields a measure $\eta$ on $\Gamma$ such that $m(t)= e_t\sharp \eta$, where $e_t(\gamma)=\gamma_t$,  and for $\eta-$a.e. $\gamma\in \Gamma$, $\gamma$ is an absolutely continuous  solution to the ODE $\dot \gamma_t= b_t(\gamma_t)$.  
\smallskip

Arguing as in  the proof of Lemma \ref{lem.inject}, we find that,  that  $\eta-$a.e. $\gamma\in \Gamma$ is a Filippov solution of  the ODE. 

\smallskip

We now disintegrate $\eta$ with respect to $\bar m_0$ into $\eta(d\gamma)=\int_{\R^d} \eta_x(d\gamma) \bar m_0(x)dx$ in such a way that, for $\bar m_0-$a.e. $x\in \R^d$ and $\eta_x-$a.e. $\gamma\in \Gamma$, $\gamma_0=x$. Since, by Lemma \ref{lem.kensf},  the Filippov solution to the ODE is unique for a.e. $x\in \R^d$, we obtain that,  for $\bar m_0-$a.e. $x\in \R^d$, $\eta_x$ is a  Dirac mass. The uniqueness of the bounded and absolutely continuous solution of the continuity equation then easily follows  from \cite{Am04}. 
\end{proof}

\bibliographystyle{siam}

\bigskip

\noindent ($^{1}$) Ceremade (UMR CNRS 7534) \\ Universit\'{e} Paris-Dauphine PSL\\ Place du Mar\'{e}chal De Lattre De Tassigny \\ 75775 Paris CEDEX 16, France\\
email: cardaliaguet@ceremade.dauphine.fr
\\ \\
\noindent ($^{2}$) Department of Mathematics, 
The University of Chicago, \\
5734 S. University Ave., 
Chicago, IL 60637, USA  \\ 
email: souganidis@math.uchicago.edu

\end{document}